\def\id{\mathrm{id}}
\def\prid{\mathrm{prid}}
\def\Z{\mathbb{Z}}
\def\Q{\mathbb{Q}}
\def\F{\mathbb{F}}
\def\C{\mathbb{C}}
\def\SS{{\mathcal S}}
\def\BB{{\mathcal B}}
\def\RR{{\mathcal R}}
\def\G{{G}}
\def\Spec{\mathrm{Spec}}
\def\i{\mathrm{i}}
\def\sep{\mathrm{sep}}
\def\image{\mathrm{image}}
\def\proj{\mathrm{proj}}
\def\im{\mathrm{im}}
\def\Hom{\mathrm{Hom}}
\def\dim{\mathrm{dim}}
\def\log{\mathrm{log}}
\def\rk{\mathrm{rank}}
\def\order{\mathrm{order}}
\def\m{{\mathfrak m}}
\def\mm{{\mathfrak m}}
\def\ff{{\mathfrak f}}
\def\nn{{\mathfrak n}}
\def\a{{\mathfrak a}}
\def\b{{\mathfrak b}}
\def\F{{\mathbb F}}
\def\onto{\twoheadrightarrow}
\def\isom{\xrightarrow{\sim}}
\numberwithin{equation}{section}
\newtheorem{thm}{Theorem}
\numberwithin{thm}{section}
\newtheorem{lem}[thm]{Lemma}
\newtheorem{cor}[thm]{Corollary}
\newtheorem{prop}[thm]{Proposition}
\theoremstyle{definition}
\newtheorem{defn}[thm]{Definition}
\newtheorem{notation}[thm]{Notation}
\newtheorem{algorithm}[thm]{Algorithm}
\newtheorem{ex}[thm]{Example}
\newtheorem{exs}[thm]{Examples}
\newtheorem{rem}[thm]{Remark}
\title
[Roots of unity in orders]
{Roots of unity in orders}
\author[H.\ W.\ Lenstra, Jr.]{H.\ W.\ Lenstra, Jr.}
\address{Mathematisch Instituut, Universiteit Leiden, The Netherlands}
\email{hwl@math.leidenuniv.nl}
\author[A.\ Silverberg]{A.\ Silverberg}
\address{Department of Mathematics, University of California, Irvine, CA 92697}
\email{asilverb@math.uci.edu}
\keywords{orders, algorithms, roots of unity, idempotents}
\thanks{This material is based on research sponsored by DARPA under agreement number FA8750-13-2-0054 and by the Alfred P.~Sloan Foundation. The U.S.\ Government is authorized to reproduce and distribute reprints for Governmental purposes notwithstanding any copyright notation thereon. The views and conclusions contained herein are those of the authors and should not be interpreted as necessarily representing the official policies or endorsements, either expressed or implied, of DARPA or the U.S.\ Government.
}
\dedicatory{Communicated by John Cremona}
\subjclass[2010]{16H15 (primary), 11R54, 13A99 (secondary)}
\keywords{orders; algorithms; roots of unity; idempotents}
\begin{document}

\begin{abstract} 
We give deterministic polynomial-time algorithms
that, given an order, compute the primitive idempotents and
determine a set of generators for the group of roots of unity
in the order.
Also, we show that the discrete logarithm problem in the group of roots of unity
can be solved in polynomial time.
As an auxiliary result, we solve the discrete logarithm problem for certain unit
groups in finite rings.
Our techniques, which are taken from commutative algebra, may have
further potential in the context of cryptology and computer algebra.
\end{abstract}

\maketitle

\section{Introduction}
An {\em order} is a commutative ring whose additive group
is isomorphic to $\Z^n$ for some non-negative integer $n$.
The present paper contains algorithms for computing the
{\em idempotents} and the {\em  roots of unity} of a given order.

In algorithms, we specify an order $A$ by listing a system
of ``structure constants'' $a_{ijk}\in\Z$ with $i,j,k\in\{ 1,2,\ldots,n\}$;
these determine the multiplication in $A$ in the sense that for some
$\Z$-basis $e_1,e_2,\ldots,e_n$ of the additive group of $A$, 
one has $e_ie_j = \sum_{k=1}^n a_{ijk}e_k$ for all $i,j$.
The elements of $A$ are then represented by their coordinates
with respect to that basis.

An idempotent of a commutative ring $R$ is an element
$e\in R$ with $e^2=e$, and we denote by
$\id(R)$ the set of idempotents.
An idempotent $e\in\id(R)$ is called {\em primitive} if 
$e\neq 0$ and for all $e'\in\id(R)$ one has $ee'\in\{0,e\}$;
let $\prid(R)$ denote the set of primitive idempotents of $R$.

Orders $A$ have only finitely many idempotents, but they
may have more than can be listed by a
polynomial-time algorithm; however, if one knows
$\prid(A)$, then one implicitly knows $\id(A)$, since there
is a bijection from the set of subsets of $\prid(A)$ to $\id(A)$
that sends $W \subset \prid(A)$ to $e_W=\sum_{e\in W}e \in\id(A)$.
For $\prid(A)$ we have the following result.

\begin{thm}
\label{idempintrothm}
There is a deterministic polynomial-time algorithm (Algorithm \ref{idempotentalg})
that, given an order $A$, lists all primitive idempotents of $A$.
\end{thm}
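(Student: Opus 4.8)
The plan is to reduce the problem to the corresponding problem over $\Q$, followed by a combinatorial ``gluing'' step. Because $A$ is torsion-free, it embeds in the finite-dimensional commutative $\Q$-algebra $A_\Q:=A\otimes_\Z\Q$, which, being Artinian, splits as a product $A_\Q=\prod_{i=1}^{r}B_i$ of local Artinian $\Q$-algebras; its primitive idempotents are the identity elements $\epsilon_1,\dots,\epsilon_r$ of the factors. Recall that for any commutative ring $R$ with only finitely many idempotents, $\prid(R)$ is in bijection with the set of connected components of $\Spec R$, and each idempotent corresponds to an open-and-closed subset. Since $A\hookrightarrow A_\Q$, every idempotent of $A$ is an idempotent of $A_\Q$, so every primitive idempotent of $A$ equals $\sum_{i\in S}\epsilon_i$ for a unique block $S$ of a partition $P$ of $\{1,\dots,r\}$; concretely, $P$ groups the points $\Spec B_i\subseteq\Spec A$ according to the connected component of $\Spec A$ in which they lie. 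So it suffices to (1) produce the factors $B_i$ and the idempotents $\epsilon_i$ of $A_\Q$, and (2) compute $P$; the output is then $\{\sum_{i\in S}\epsilon_i:S\in P\}$.

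For (1) one uses the standard machinery for decomposing a finite-dimensional commutative $\Q$-algebra: compute the nilradical $\nn$ of $A_\Q$ as the radical of the trace form $(x,y)\mapsto\Tr_{A_\Q/\Q}(xy)$ (legitimate since the residue fields of $A_\Q$ are number fields and we are in characteristic $0$); pass to the \'etale algebra $A_\Q/\nn\cong\prod_i K_i$, a product of number fields; find an element generating $A_\Q/\nn$ over $\Q$ by a deterministic search through integer combinations of bounded size of a $\Q$-basis; factor its minimal polynomial over $\Q$, which is deterministic polynomial time by Lenstra--Lenstra--Lov\'asz; split $\prod_iK_i$ by the Chinese remainder theorem; and finally lift the resulting orthogonal idempotents through the nilpotent ideal $\nn$. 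This presents each $\epsilon_i$ as a vector in $\Q^n$ in the given basis.

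For (2), put $\q_i:=\ker(A\to B_i)=A\cap(1-\epsilon_i)A_\Q$; this is the lattice of integer points of an explicit $\Q$-subspace and is computed by a Hermite (or Smith) normal form. Then $A/\q_i$ is again an order, injecting into the local ring $B_i$, so $A/\q_i$ has no nontrivial idempotents and $V(\q_i)=\Spec(A/\q_i)\subseteq\Spec A$ is closed and connected; and $\bigcap_i\q_i=A\cap\bigcap_i(1-\epsilon_i)A_\Q=0$, so the sets $V(\q_i)$ cover $\Spec A$. By the elementary fact that, when a space is the union of finitely many closed connected subsets $Z_i$, its connected components are the unions $\bigcup_{i\in C}Z_i$ over the connected components $C$ of the graph with vertex set $\{i\}$ and an edge $\{i,j\}$ whenever $Z_i\cap Z_j\neq\varnothing$, the partition $P$ is exactly the set of connected components of the graph $G$ on $\{1,\dots,r\}$ having an edge $\{i,j\}$ precisely when $V(\q_i)\cap V(\q_j)\neq\varnothing$; and $V(\q_i)\cap V(\q_j)=V(\q_i+\q_j)$ is nonempty exactly when $\q_i+\q_j$ is a proper ideal of $A$ --- equivalently, when the sublattice of $\Z^n$ generated by generators of $\q_i$ and of $\q_j$ is not all of $A$, a single Hermite-normal-form test. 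One computes $G$ and its connected components $P_1,\dots,P_s$ by breadth-first search and outputs $f_k:=\sum_{i\in P_k}\epsilon_i$. Finally one checks that $f_k\in A$: the connected component $\bigcup_{i\in P_k}V(\q_i)$ of $\Spec A$ is open-and-closed, hence cut out by an idempotent of $A$ whose image in $A_\Q$ is $f_k$, and $A\hookrightarrow A_\Q$ forces that idempotent to be $f_k$ itself; and $\{f_1,\dots,f_s\}=\prid(A)$ by construction.

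All of step (2) and the bookkeeping in step (1) are integer linear algebra and graph traversal, hence polynomial time; the single non-elementary ingredient is deterministic polynomial-time factorization of univariate polynomials over $\Q$, for which we rely on Lenstra--Lenstra--Lov\'asz. The part demanding the most care is the size analysis: one must bound the bit-length of the $\epsilon_i$ --- a common denominator divides the index $[\prod_iA_i:A]$, where $A_i$ denotes the image of $A$ in $B_i$, which in turn is polynomially bounded via the discriminant of $A$ --- so that the subsequent lattice computations remain polynomial. One must also resist reading $P$ off the pairwise comaximality tests directly: two of the points $\Spec B_i$ may lie in a single connected component of $\Spec A$ only through a chain of intermediate ones, so it is genuinely the connected components of $G$, and not merely its edges, that determine $\prid(A)$.
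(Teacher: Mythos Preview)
Your proof is correct and follows essentially the same route as the paper: decompose $A_\Q$ into its local factors, form the graph on those factors in which two vertices are joined when the corresponding ideals of $A$ fail to be comaximal, and read off $\prid(A)$ from the connected components. The paper packages the key step as the combinatorial Lemma~\ref{idempotClem} (which is exactly your topological statement about a cover of $\Spec A$ by closed connected subsets) and reduces to $A_\sep$ before building the graph rather than lifting idempotents through the nilradical afterwards, but these are cosmetic differences rather than a genuinely different argument.
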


A {\em root of unity} in a commutative ring $R$ is an element
of finite order of the group $R^\ast$ of invertible elements of $R$;
we write
$\mu(R)$ for the set of roots of unity in $R$, which is a subgroup of $R^\ast$.

As with idempotents, orders $A$ have only finitely many  roots of unity,
but possibly more than
can be listed by a polynomial-time algorithm, and to
control $\mu(A)$ we shall use generators and relations.
If $S$ is a finite system of generators for an abelian group $G$,
then by a {\em set of defining relations} for $S$ we mean a system of
generators for the kernel of the surjective group homomorphism
$\Z^S \to G$, $(m_s)_{s\in S} \mapsto \prod_{s\in S} s^{m_s}$.

\begin{thm}
\label{muAthm}
There is a deterministic polynomial-time algorithm (Algorithm \ref{rootsofunityrelsalg})
that, given an order $A$, produces a set $S$ of generators of $\mu(A)$,
as well as a set of defining relations for $S$. 
\end{thm}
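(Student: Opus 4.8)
\emph{Proof proposal.} The guiding idea is to compare $\mu(A)$ with the far more accessible group $\mu(A\otimes\Q)$, into which it embeds. First I would run the algorithm of Theorem \ref{idempintrothm} to compute $\prid(A)$; since $A$ is the product of the rings $Ae$ over $e\in\prid(A)$, and $\mu$ of a product is the product of the $\mu$'s, it suffices to produce generators and defining relations for each factor, so we may assume $0$ and $1$ are the only idempotents of $A$. Put $n=\dim(A\otimes\Q)$. Ordinary linear algebra over $\Q$ (splitting $A\otimes\Q$ into local blocks, factoring polynomials) yields $A\otimes\Q\cong\prod_i L_i$ with $L_i$ local Artinian, and for each $i$ the residue number field $K_i=(L_i)_\red$ together with the cyclic group $\mu(K_i)$ and a generator $\gamma_i$; since $\mu(L_i)\to\mu(K_i)$ is an isomorphism (a nilpotent extension in characteristic $0$, and $X^m-1$ is separable over $K_i$), this presents $\mu(A\otimes\Q)=\prod_i\langle\gamma_i\rangle$ as an explicit finite abelian group, each $\gamma_i$ written out as a rational coordinate vector. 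Two facts are then recorded: (a) $\mu(A)=\mu(A\otimes\Q)\cap A$, where ``$\cap A$'' simply means ``has integral coordinates'', so $\mu(A)$ is literally a subgroup of an explicit finite group (and this holds verbatim whether or not $A$ is reduced); and (b) since $\Q(\zeta_{q^a})\subseteq K_i$ whenever a prime power $q^a$ divides $|\mu(K_i)|$, every $q^a$ that can occur as the order of an element of $\mu(A\otimes\Q)$ — hence of $\mu(A)$ — satisfies $q^a\le 2n$. Fact (b) says $\mu(A)$ is $2n$-powersmooth; this is essential, since $|\mu(A)|$ itself may be too large to list, and powersmoothness is exactly what keeps all the discrete logarithms below polynomial time.

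The core problem is to cut out the subgroup $\mu(A)\le\mu(A\otimes\Q)$; the obstacle is that the defining condition (integrality of coordinates) is additive while $\mu(A\otimes\Q)$ is multiplicative, so it is not the kernel of any obvious homomorphism, and the ambient group can have superpolynomially many elements. I would work one rational prime $q$ at a time (only polynomially many occur), computing the $q$-primary part $H:=\mu(A\otimes\Q)[q^\infty]\cap A$, a finite abelian $q$-group of rank at most the number of $L_i$ (so $\le n$) and exponent $q^b\le 2n$. Fix a prime $p$ that is polynomially bounded, exceeds $n$, exceeds twice an a priori bound $C$ on the coordinates of roots of unity of $A$, and does not divide the (computable) common denominator of the $\gamma_i$; such $p$ exists and is found by trial. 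Then reduction mod $p$ is injective on $\mu(A)$ (two roots of unity with coordinates of absolute value below $C<p/2$ that are congruent mod $p$ coincide) and on $\mu(A\otimes\Q)[q^\infty]$ (roots of unity of order prime to $p$ inject into the reduced quotient of $A/pA$), and the $\gamma_i$ reduce into the finite ring $A/pA$. Working inside $A/pA$, decomposed into local factors, I would compute the group $\{\bar z\in(A/pA)^\ast:\bar z^{q^b}=1\}$ with a presentation, together with the injective homomorphism $\bar\Psi\colon\prod_i\Z/q^{a_i}\hookrightarrow(A/pA)^\ast$, $(e_i)\mapsto\prod_i\bar\gamma_i^{\,e_i}$ (where $q^{a_i}$ is the $q$-part of $|\mu(K_i)|$), whose image is the reduction of $\mu(A\otimes\Q)[q^\infty]$. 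Finally, $\bar\Psi((e_i))$ is the reduction of a genuine root of unity of $A$ exactly when the unique lift of $\bar\Psi((e_i))$ to $A$ with coordinates in $(-p/2,p/2)$ satisfies $X^{q^b}=1$ in $A$ — an explicitly checkable condition, which is a polynomial-time membership test for the subgroup of $\prod_i\Z/q^{a_i}$ corresponding to $H$. From this test I would extract a generating set and defining relations of $H$ (equivalently, the Hermite normal form of the sublattice of $\Z^r$ that it pulls back to); doing this in polynomial time, despite $H$ and its ambient groups being potentially superpolynomially large, is where the auxiliary algorithm for discrete logarithms in unit groups of finite rings is used to carry out the necessary searches inside $(A/pA)^\ast$ — which works precisely because the relevant subgroups are $2n$-powersmooth. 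Lifting the chosen generators of each $H$ back to $A$ by the small-coordinate rule and ranging over all $q$ produces a generating set $S$ of $\mu(A)$.

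It remains to produce defining relations for $S$. Having determined the structure $\mu(A)\cong\prod_k\Z/d_k$ and a basis realizing it, I would express each $s\in S$ in terms of that basis by solving the discrete logarithm problem in $\mu(A)$, done through the embedding $\mu(A)\hookrightarrow\prod_i\mu(K_i)$ and discrete logarithms in the cyclic groups $\mu(K_i)$, which are polynomial time by Pohlig--Hellman because the orders $|\mu(K_i)|$ are powersmooth. The kernel of $\Z^S\to\mu(A)$ is then read off by linear algebra over $\Z$ from these expressions and the $d_k$; taking the union over the primitive-idempotent factors of $A$ gives the required set of defining relations.

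The hard part is the middle step: proving that a polynomially bounded prime $p$ with all the stated properties exists, establishing the polynomial a priori bound $C$ on the coordinates of a root of unity of $A$ (so that its residue mod $p$ determines it), and — above all — recovering generators and defining relations of the subgroup $H$ from only a membership oracle, in polynomial time, even though $H$ and the groups surrounding it can be of superpolynomially large order. This last point is what forces the detour through discrete logarithms in finite rings and makes the powersmoothness of $\mu(A)$, rather than merely its finiteness, do essential work throughout. One must also verify that nilpotents in $A$ cause no difficulty, which they do not: the entire argument takes place inside $A\otimes\Q$ and $A/pA$, and the identity $\mu(A)=\mu(A\otimes\Q)\cap A$ holds regardless of whether $A$ is reduced.
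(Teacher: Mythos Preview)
Your proposal has a genuine gap at its core: the step where you recover generators of $H=\mu(A\otimes\Q)[q^\infty]\cap A$ from a membership test. You correctly note that $H$ sits inside $\prod_i \Z/q^{a_i}$, a group whose individual factors are small ($q^{a_i}\le 2n$) but whose order can be as large as $(2n)^n$, and that you have a polynomial-time membership test for $H$. You then invoke Theorem~\ref{muAthm4} to extract generators and relations. But Theorem~\ref{muAthm4} is not a general tool for subgroups of unit groups of finite rings: it applies \emph{only} to groups of the shape $1+I$ with $I$ nilpotent, where the filtration $1+I\supset 1+I^2\supset\cdots$ makes the group essentially additive. In your setup the image of $\bar\Psi$ lies in the $q$-torsion of $(A/pA)^\ast$, which (since $q\neq p$) projects isomorphically into the semisimple quotient $\prod_j \F_{p^{k_j}}^\ast$; there is no nilpotent ideal in sight, so Theorem~\ref{muAthm4} simply does not apply. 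And without some such structure, finding an unknown subgroup of a large abelian group from a bare membership oracle is not known to be polynomial-time --- powersmoothness of the exponent does not help, because the obstruction is the rank, not the exponent. (Think of an unknown subspace of $(\Z/q)^n$: a random vector lies in a codimension-one subspace with probability $1/q$, so locating even one nonzero element of $H$ by querying can take exponentially many tries.)

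The paper's proof is organized precisely to manufacture the nilpotent structure that your reduction modulo an auxiliary prime destroys. It never attempts to intersect $\mu(B)_p$ with $A$ directly. Instead it interposes $C=A_\sep[1/p]\cap B$, splitting the problem into two halves of completely different character. For the step $\mu(B)_p\rightsquigarrow\mu(C)_p$, the index $(B:C)$ is prime to $p$, so $X^{p^k}-1$ is separable over $C[1/p]$; the graph $\Gamma(C)$ then governs the connected pieces $C_W$, and Corollary~\ref{cycliclemma} forces each $\mu(C_W)_p$ to be \emph{cyclic} of small order, hence computable by a short search (Algorithm~\ref{BtoCalgor}). For the step $\mu(C)_p\rightsquigarrow\mu(A)_p$, the index $(C:A_\sep)$ is a $p$-power, so the conductor $\ff$ yields a finite ring $C/\ff$ of $p$-power order in which every $\zeta-1$ with $\zeta\in\mu(C)_p$ is nilpotent; thus $\mu(C)_p$ lands in $1+I$ for a genuinely nilpotent $I$, and $\mu(A)_p$ is exactly the kernel of $\mu(C)_p\to(1+I)/(1+I')$. \emph{That} is where Theorem~\ref{muAthm4} is used, and it is the introduction of $C$ --- not reduction modulo an unrelated prime --- that makes it available.
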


Theorem \ref{muAthm}, which provides a key ingredient in an algorithm
for lattices with symmetry that was recently developed by the authors \cite{LenSil,LwS}, is our main result,
and its proof occupies most of the paper.
It makes use of
several techniques from commutative algebra that so far have found little 
employment in an algorithmic context.
A sketch appeared in Proposition 4.7 of \cite{LenSil}.

We shall also obtain a solution to the discrete logarithm
problem in $\mu(A)$ and all its subgroups, and more generally in
all subgroups of the group $\mu(A\otimes_\Z\Q)$, which is still finite.
Note that $A\otimes_\Z\Q$ is a ring containing $A$ as a subring,
and that a $\Z$-basis for $A$ is a $\Q$-basis for the additive group
of $A\otimes_\Z\Q$. If one replaces $\mu(A)$ by $\mu(A\otimes_\Z\Q)$
in Theorem \ref{muAthm}, then it remains true, and in fact
it becomes much easier to prove (Proposition \ref{muEthmalgorworks}).
Our  solution to the discrete logarithm
problem in $\mu(A\otimes_\Z\Q)$ and all of its subgroups, in
particular in $\mu(A)$, reads as follows.

\begin{thm}
\label{muAthm2}
There is a deterministic polynomial-time algorithm 
that, given an order $A$, a finite system $T$ of elements of $\mu(A\otimes_\Z\Q)$,
and an element $\zeta\in A\otimes_\Z\Q$, decides whether $\zeta$ belongs to the
subgroup $\langle T\rangle \subset \mu(A\otimes_\Z\Q)$ generated by $T$, 
and if so finds $(m_t)_{t\in T}\in\Z^T$ with $\zeta = \prod_{t\in T} t^{m_t}$.
\end{thm}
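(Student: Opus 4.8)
The plan is to reduce the statement to solving discrete logarithms in a product of \emph{small} cyclic groups, followed by integer linear algebra; the reason this is easier than Theorem~\ref{muAthm} is that, field component by field component, $\mu(A\otimes_\Z\Q)$ has polynomially bounded exponent, so that brute force suffices. Write $E=A\otimes_\Z\Q$. The first step is to pass to the reduced case and split $E$ into number fields. The nilradical $\nn$ of $E$ is the radical of the trace form, hence computable by linear algebra; the group $1+\nn$ is torsion-free, as it has a finite filtration by the subgroups $1+\nn^j$ whose successive quotients are isomorphic to the $\Q$-vector spaces $\nn^j/\nn^{j+1}$, so the reduction map induces an isomorphism $\mu(E)\cong\mu(E/\nn)$, and a prescribed root of unity of $E/\nn$ lifts uniquely to $\mu(E)$, computably by a terminating binomial-series computation in $\nn$. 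It thus suffices to work in $E/\nn$. Computing the primitive idempotents of $E/\nn$ — a deterministic polynomial-time task for a finite-dimensional commutative $\Q$-algebra, via the ideas behind Algorithm~\ref{idempotentalg} together with deterministic polynomial factorization — yields $E/\nn\cong\prod_{i=1}^{r}K_i$ with each $K_i$ a number field; an element of $E/\nn$ is henceforth handled through its tuple of images in the $K_i$.

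Next, for each $i$ I would make the finite cyclic group $\mu(K_i)$ explicit. Its order $m_i$ is the largest integer $m$ with $\Q(\zeta_m)\subseteq K_i$, which forces $\phi(m)\mid[K_i:\Q]$; by the standard lower bound on Euler's function this makes $m_i$ polynomially bounded in $[K_i:\Q]$, so there are only polynomially many candidate values $m$, each of polynomial bit-size. Running over these in decreasing order and factoring the cyclotomic polynomial $\Phi_m$ over $K_i$ (deterministic polynomial-time factorization over a number field), the first $m$ for which $\Phi_m$ acquires a root $g_i\in K_i$ is $m_i$, and $g_i$ generates $\mu(K_i)$. Because $m_i$ is polynomially bounded, the discrete logarithm problem in $\langle g_i\rangle$ is solved outright by brute force: tabulate $g_i^0,g_i^1,\dots,g_i^{m_i-1}$ and look up. Thus, for any $\omega$ presented by a tuple $(\omega^{(i)})_i$ of elements of the $K_i^\ast$, we can decide whether $\omega^{(i)}\in\mu(K_i)$ for every $i$ and, if so, compute the coordinate vector $a(\omega)=(a_i)_i$ determined by $\omega^{(i)}=g_i^{a_i}$; the resulting map is an isomorphism $\mu(E)\cong\mu(E/\nn)\cong\prod_{i=1}^{r}\Z/m_i\Z$.

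The algorithm now proceeds as follows. Compute the tuple of images of $\zeta$ in the $K_i$ and use the previous step to decide whether it defines an element of $\mu(E/\nn)$; if not, or if the unique lift of that element to $\mu(E)$ differs from $\zeta$, return ``no'' — indeed then $\zeta\notin\mu(E)\supseteq\langle T\rangle$. Otherwise compute $a(\zeta)$ and $a(t)$ for every $t\in T$. Then $\zeta\in\langle T\rangle$ if and only if $a(\zeta)$ lies in the subgroup of $\prod_i\Z/m_i\Z$ generated by the vectors $a(t)$, and this is decided by integer linear algebra: form the integer matrix whose rows are the $a(t)$ together with, for each $i$, the vector equal to $m_i$ times the $i$-th standard basis vector, and test by means of its Hermite normal form whether $a(\zeta)$ is an integer combination of the rows. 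If it is, read off integers $(m_t)_{t\in T}$ with $\sum_{t\in T}m_t\,a(t)\equiv a(\zeta)$ modulo the relation vectors; by injectivity of $a$ this gives $\prod_{t\in T}t^{m_t}=\zeta$, the required output. Every step — radical and idempotent computations, the candidate search and factorizations, the brute-force lookups, the binomial lift, and the Hermite normal form — is deterministic and, because each $m_i$ is polynomially bounded, runs in polynomial time.

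The main obstacle is the first step: realizing the passage to the explicit product $\prod_iK_i$, the choice of the generators $g_i$, and the factorizations of cyclotomic polynomials as genuinely \emph{deterministic} polynomial-time computations. Splitting a finite-dimensional commutative $\Q$-algebra into fields, selecting primitive elements, and factoring polynomials over number fields are all classical and ultimately rest on the LLL-based deterministic factorization of polynomials over $\Q$; the proof should invoke these, or re-use the idempotent machinery already developed, rather than reprove them. Once the $K_i$ and $g_i$ are available, the remaining discrete-logarithm and linear-algebra steps are routine, and it is exactly the polynomial bound on the per-component orders $m_i$ that makes this theorem simpler to prove than Theorem~\ref{muAthm}.
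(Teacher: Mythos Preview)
Your proposal is correct and follows essentially the same approach as the paper: split $E$ into number fields, find a generator of each cyclic group $\mu(E/\m)$ using the polynomial bound on its order, solve the componentwise discrete logarithms by brute force, and then reduce the subgroup-membership question for $\langle T\rangle$ to integer linear algebra. The paper packages the last steps into its ``efficient presentation'' formalism (Algorithms~\ref{muEthmalgor}, \ref{muEthmalgor2}, and~\ref{genlprinalgor2}) and avoids your binomial-series lift by using the splitting $E=E_\sep\oplus\sqrt{0}$ of Proposition~\ref{CRTforEred} directly, but these are cosmetic differences only.
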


We shall prove Theorem \ref{muAthm2} in section \ref{DLsect}, as a consequence of
the results on $\mu(A\otimes_\Z\Q)$ in section~\ref{Qalssect}
and a number of formal properties of ``efficient presentations''
of abelian groups that are developed in section \ref{DLsect}.

A far-reaching generalization of Theorem \ref{muAthm2}, in which
$\mu(A\otimes_\Z\Q)$ is replaced by the full unit group $(A\otimes_\Z\Q)^\ast$,
is proven in \cite{Qalgs}.

Of the many auxiliary results that we shall use, there are two that
have independent interest.
The first concerns the discrete logarithm problem in certain unit groups
of finite rings, and it reads as follows.

\begin{thm}
\label{muAthm4}
There is a deterministic polynomial-time algorithm 
that, given a finite commutative ring $R$ and a nilpotent ideal $I\subset R$,
produces a set $S$ of generators of the subgroup $1+I \subset R^\ast$,
as well as a set of defining relations for $S$. 
Also, there is a deterministic polynomial-time algorithm 
that, given $R$ and $I$ as before, as well as a finite system $T$
of elements of $1+I$ and an element $\zeta\in R$, decides whether
$\zeta$ belongs to the
subgroup $\langle T\rangle \subset 1+I$, 
and if so finds $(m_t)_{t\in T}\in\Z^T$ with $\zeta = \prod_{t\in T} t^{m_t}$.
\end{thm}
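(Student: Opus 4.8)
The plan is to exploit the filtration of $1+I$ by the subgroups $1+I^k$. Since $I$ is nilpotent, say $I^m=0$, we have a chain $1+I \supset 1+I^2 \supset \cdots \supset 1+I^m = \{1\}$, and each successive quotient $(1+I^k)/(1+I^{k+1})$ is naturally isomorphic, via $1+x \mapsto x \bmod I^{k+1}$, to the \emph{additive} group $I^k/I^{k+1}$, because $(1+x)(1+y) = 1+x+y+xy$ and $xy \in I^{2k} \subset I^{k+1}$ for $x,y \in I^k$. Each $I^k/I^{k+1}$ is a finite abelian group for which we can compute generators and relations directly: $R$ and $I$ are given explicitly, so $I^k$ is computed by multiplying out generators, and $I^k/I^{k+1}$ is a finitely generated abelian group presented by an integer matrix, whose structure (Smith normal form, in particular a set of generators with their additive orders) is computable in polynomial time. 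First I would carry out this computation layer by layer to obtain, for each $k$, generators $g_{k,1}, \dots, g_{k,r_k}$ of $1+I^k$ modulo $1+I^{k+1}$ together with their orders modulo $1+I^{k+1}$, i.e. integers $n_{k,j}$ with $g_{k,j}^{n_{k,j}} \in 1+I^{k+1}$.

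Next I would assemble these into a presentation of $1+I$ itself. The union $S = \{g_{k,j}\}$ generates $1+I$, by an easy downward induction on $k$. For the defining relations, each relation $g_{k,j}^{n_{k,j}} \in 1+I^{k+1}$ must be rewritten: the element $g_{k,j}^{n_{k,j}}$ lies in $1+I^{k+1}$, so by the already-computed generators for the lower layers one finds an explicit expression $g_{k,j}^{n_{k,j}} = \prod_{\ell > k} \prod_i g_{\ell,i}^{c_{\ell,i}}$, giving a relation of the group. A standard argument (the ``tower of presentations'' lemma for a group with a central series whose quotients are presented) shows that these relations, one for each generator, together with the rewrites of the additive relations among generators \emph{within} a single layer $I^k/I^{k+1}$, form a complete set of defining relations for $S$. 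The key subtlety is that all the ``rewriting'' steps — expressing a given element of $1+I^{k+1}$, or of a deeper layer, in terms of the chosen generators — must themselves be done algorithmically in polynomial time; this is where discrete logarithm in the additive groups $I^k/I^{k+1}$ enters, but that is just linear algebra over $\Z$ and is polynomial-time.

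For the second assertion, the discrete logarithm statement, I would solve it layerwise as well. Given $\zeta$ and generators $T$ of a subgroup of $1+I$: first test $\zeta \in 1+I$ (i.e. $\zeta - 1 \in I$, a linear-algebra check); then work modulo $1+I^2$, where the problem becomes an instance of discrete logarithm in the finite abelian group $I/I^2$ — solvable in polynomial time by reduction to Smith normal form and integer linear algebra. This determines the exponents $(m_t)$ modulo the orders of the $t$'s in the top layer, and one then multiplies $\zeta$ by the corresponding product of powers of the $t$ to obtain an element of $1+I^2$, recursing into the next layer. After at most $m$ rounds one has either produced an exponent vector or detected (at some layer) that no solution exists. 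Throughout, one must be careful that the exponents do not blow up: at each layer one reduces the newly-found exponents modulo the appropriate orders, and the orders, being divisors of the exponent of the finite group $1+I$, have polynomial bit-length, so the final $(m_t) \in \Z^T$ are of polynomial size.

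The main obstacle I anticipate is bookkeeping rather than conceptual: making the inductive ``lift and descend one layer'' procedure genuinely polynomial-time requires that the number of generators per layer, the sizes of the exponents, and the cost of each rewriting step all stay polynomially bounded simultaneously, and that the number of layers $m$ (the nilpotency index of $I$) is itself polynomially bounded — which it is, since $I^m = 0$ forces $m \le \log_2 |R|$, as $|I| > |I^2| > \cdots$ strictly until the ideal vanishes. Verifying these bounds carefully, and phrasing the layerwise induction so that both the presentation and the discrete-logarithm algorithm fall out of the same recursive structure, is the technical heart of the argument.
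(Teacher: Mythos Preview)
Your approach to the first assertion (generators and defining relations for $1+I$) is essentially the paper's: both exploit the filtration by powers of $I$ and the additive-to-multiplicative isomorphisms on successive quotients, lifting the additive relations in each layer by rewriting in terms of deeper generators. The paper uses the doubling filtration $1+I \supset 1+I^2 \supset 1+I^4 \supset \cdots$ rather than your linear one, but since (as you correctly note) the nilpotency index is at most $1+\log_2|R|$, both have polynomially many layers and the difference is cosmetic.

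There is, however, a genuine gap in your treatment of the second assertion. After the first layer you obtain $\zeta_1 = \zeta\prod_t t^{-m_t^{(1)}} \in 1+I^2$ and propose to ``recurse into the next layer.'' But the original generators $t \in T$ lie in $1+I$, not in general in $1+I^2$, so they have no image in the layer $(1+I^2)/(1+I^3) \cong I^2/I^3$; you cannot simply rerun the same step with the same $T$. What is needed at layer~$2$ is a generating set for $\langle T\rangle \cap (1+I^2)$, namely the image under $n \mapsto \prod_t t^{n_t}$ of the lattice $K_1 = \ker(\Z^T \to I/I^2)$. One must compute $K_1$, evaluate these products, and recurse with the resulting elements as the new generators; then compute $K_2 \subset K_1$, and so on. This is all doable in polynomial time, but it is not what you wrote, and the phrase ``modulo the orders of the $t$'s in the top layer'' misdescribes the situation: the exponents are determined modulo the lattice $K_1$, not modulo a product of individual orders.

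The paper sidesteps this entirely. It packages the first part as an \emph{efficient presentation} of $1+I$ --- generators, relations, \emph{and} a polynomial-time discrete-logarithm algorithm in the full group $1+I$ --- and then invokes a general principle (Algorithm~\ref{genlprinalgor2a}): once one has an efficient presentation of an ambient abelian group $G$, an efficient presentation of any finitely generated subgroup $\langle T\rangle \subset G$ follows by pure linear algebra over~$\Z$. Your layerwise computation for the subgroup case is in effect re-deriving this for the special case $G = 1+I$; separating the two concerns, as the paper does, makes the argument shorter and reusable.
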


The proof of this theorem is given in section \ref{Isect}.
It depends on the resemblance of $1+I$ to the additive group $I$,
in which the discrete logarithm problem is easy.

The second result that we single out for special mention is of a 
purely theoretical nature.
Let $R$ be a commutative ring.
For the purposes of this paper, commutative rings have an identity
element $1$ (which is $0$ if and only if the ring is the $0$ ring).
We call $R$  {\em connected} if $\#\id(R) = 2$ or, equivalently,
if $\id(R) = \{ 0,1\}$ and $R\neq \{ 0\}$.
A polynomial $f\in R[X]$ is called {\em separable}
(over $R$) if $f$ and its formal derivative
$f'$ generate the unit ideal in $R[X]$.
For example, $f=X^2-X$ is separable because $(f')^2 -4f =1$.

\begin{thm}
\label{degbdintrothm}
Let $R$ be a connected commutative ring, and let $f \in R[X]$ be separable. Then
$f\neq 0$ and $\#\{ r\in R : f(r) = 0 \} \le \deg(f)$.
\end{thm}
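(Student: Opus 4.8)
The plan is to induct on $\deg(f)$, handling the degree-zero case by hand and reducing the general case to a factorization $f = (X-r)g$ with $g$ again separable. First suppose $\deg(f) = 0$, so $f$ is a constant $c \in R$. Separability says $c$ and $f' = 0$ generate the unit ideal, i.e.\ $(c) = R$, so $c$ is a unit; in particular $c \neq 0$ (as $R \neq \{0\}$, since $R$ is connected), and the zero set of $f$ is empty, so both assertions hold. Now assume $\deg(f) = d \ge 1$ and the result holds for all connected rings and all separable polynomials of smaller degree. The first thing to check is $f \neq 0$: if $f = 0$ then $f' = 0$ too, and they generate the zero ideal, which is the unit ideal only if $R = \{0\}$, contradicting connectedness. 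So $f \neq 0$, though a priori its nominal degree $d$ may exceed the degree of its actual top nonzero coefficient; I will want to know the leading coefficient is not a zero-divisor in a strong sense, and in fact the cleanest route is to first show that the leading coefficient of $f$ (the coefficient of $X^d$) is a unit — this follows because modulo any maximal ideal $\mm$ of $R$ the image $\bar f$ is a nonzero polynomial (as $\bar f, \bar{f'}$ generate the unit ideal of $(R/\mm)[X]$), so a standard argument shows the leading coefficient lies in no maximal ideal, hence is a unit. Dividing by it, we may assume $f$ is monic of degree $d$.

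If $f$ has no root in $R$ there is nothing to prove, so suppose $f(r) = 0$ for some $r \in R$. Since $f$ is monic, polynomial division gives $f = (X - r)g$ with $g \in R[X]$ monic of degree $d-1$. The key step is to verify that $g$ is separable over $R$: differentiating, $f' = g + (X-r)g'$, so $f'(r) = g(r)$; since $f$ and $f'$ generate the unit ideal in $R[X]$, reducing modulo the ideal $(X-r)$ — equivalently evaluating at $X = r$, which is a ring homomorphism $R[X] \to R$ — shows that $f(r) = 0$ and $f'(r) = g(r)$ together generate the unit ideal of $R$, i.e.\ $g(r) \in R^\ast$. More strongly, I claim $g$ and $g'$ generate the unit ideal in $R[X]$: write $1 = af + bf' = a(X-r)g + b(g + (X-r)g') = \big(a(X-r) + b\big)g + b(X-r)g'$, and note that $b(X-r)g' = (X-r)\cdot bg'$ while... this still has a factor of $(X-r)$ that I must absorb; the honest statement is that modulo $(g)$ we get $1 \equiv \big(a(X-r)+b\big)\cdot 0 + (X-r)\cdot bg' \pmod{(g)}$, which is not quite what I want. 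The main obstacle is precisely this: showing $g$ is separable as a polynomial, not merely that $g(r)$ is a unit. The correct argument is to observe $g' = f' - (X-r)g' \cdot 0$... rather: since $g(r) \in R^\ast$, the element $X - r$ becomes a unit times a degree-one factor, and one shows $\gcd(g, X-r) = (1)$ in $R[X]$ because $g(r)$ is a unit (so $1 = g(r)^{-1}\big(g - (\text{multiple of } X-r)\big)$ expresses $1 \in (g, X-r)$); combining this with $1 \in (f, f') = \big((X-r)g, g + (X-r)g'\big)$ lets one eliminate the $(X-r)$-multiple and conclude $1 \in (g, g')$ by a short ideal-theoretic manipulation in $R[X]$.

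Once $g$ is known to be separable of degree $d-1$, the inductive hypothesis gives $\#\{r' \in R : g(r') = 0\} \le d-1$. To finish I must bound the roots of $f$ by those of $g$ plus one: if $f(r') = 0$ then $(r'-r)g(r') = 0$, and I need that either $r' = r$ or $g(r') = 0$. This is where connectedness enters decisively in the form of Theorem~\ref{degbdintrothm}'s own spirit applied to the linear polynomial: for a single $r'$, the two elements $r' - r$ and $g(r')$ have product zero; if both were non-units, one would want to produce a nontrivial idempotent. Concretely, since $1 \in (g, X-r)$ in $R[X]$, evaluating at $r'$ gives $1 \in (g(r'), r'-r)$ in $R$, so $g(r')$ and $r'-r$ are comaximal with product zero; by CRT $R \cong R/(g(r')) \times R/(r'-r)$, and connectedness forces one of these factors to be zero, i.e.\ $g(r') = 0$ or $r' - r = 0$. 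Hence $\{r' : f(r') = 0\} \subseteq \{r\} \cup \{r' : g(r') = 0\}$, giving the bound $d$. I expect the comaximality bookkeeping in $R[X]$ (the second paragraph) to be the only genuinely delicate point; everything else is either the base case or a clean application of the inductive hypothesis together with the connectedness-plus-CRT dichotomy.
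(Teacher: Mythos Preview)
Your inductive argument is essentially correct but contains one false claim and one confused passage, both easily repaired.

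The false claim: the leading coefficient of a separable polynomial over a connected ring need \emph{not} be a unit. Over $\Z$ (which is connected), $f = 2X - 1$ is separable since $X\cdot f' - f = 1$, yet its top coefficient is $2$. Fortunately you never actually use monicness: division by the monic linear polynomial $X - r$ always succeeds and produces $g$ of degree exactly $d-1$ (with the same top coefficient as $f$), so the induction is well-founded without this step.

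The confused passage: your displayed identity
\[
1 \;=\; \big(a(X-r)+b\big)\,g \;+\; b(X-r)\,g'
\]
already proves $g$ separable, since the second summand is visibly a multiple of $g'$. You then talk yourself out of this and detour through $(g, X-r)$; that detour is correct but unnecessary.

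As for comparison with the paper: the paper does not induct. It first shows (Proposition~\ref{degbd}(a)) that any two zeros $r,s$ of $f$ satisfy $r-s \in \{0\}\cup R^\ast$, by manufacturing an explicit idempotent from $r$, $s$, and the unit $f'(r)$; this immediately implies that the zero-set of $f$ injects into any nonzero quotient of $R$ (part (b)), in particular into $R/\mm$ for a maximal ideal $\mm$, where the field case finishes the count. Your CRT-plus-connectedness dichotomy at the end is morally the same mechanism as (a) --- ``product zero and comaximal forces one factor to vanish'' via a nontrivial-idempotent contradiction --- but you deploy it locally inside an induction, whereas the paper uses it once to reduce globally to a field. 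The paper's route has the advantage of yielding the stronger statements (a) and (b), which are needed elsewhere (e.g.\ Proposition~\ref{Wconnprop}); yours is the classical textbook induction and is self-contained for the bound alone.
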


For the elementary proof, see section \ref{sepconnsect}.

While, technically, one must admit that Theorem \ref{degbdintrothm}
plays only a modest role in the paper, it does convey an
important message, namely that zeroes of polynomials that are separable
are easier to control than zeroes of other polynomials.
Thus, $X^2-X$ is separable over any $R$, while $X^m-1$ (for $m\in\Z_{>0}$)
is separable if and only if $m\cdot 1\in R^\ast$, a condition that
for a non-zero order and $m > 1$ is never satisfied;
accordingly, Theorem \ref{idempintrothm} is much easier to prove than Theorem \ref{muAthm}.

We next provide an overview of the algorithms that underlie
Theorems \ref{idempintrothm} and \ref{muAthm}.
In both cases, one starts by reducing the problem, in a fairly
routine manner, to the special case in which each element of $A$ is
a zero of some separable polynomial in $\Q[X]$;
for the rest of the introduction we assume that the latter condition
is satisfied.
Then the $\Q$-algebra $E=A\otimes_\Z\Q$ can be written as the product
of finitely many algebraic number fields $E/\m$, with
$\m$ ranging over the finite set $\Spec(E)$ of prime ideals of $E$;
hence $\prid(E)$ is in bijection with $\Spec(E)$.
The image of $A\subset E$ under the map $E\to E/\m$
may be identified with the ring $A/(\m\cap A)$,
so that $A$ becomes a subring of the product ring
$B = \prod_{\m\in\Spec(E)}A/(\m\cap A)$;
this is also an order, and it is ``close" to $A$ in the sense
that the abelian group $B/A$ is finite.
The ring $B$ has many idempotents, in the sense that
$\id(B)$ equals all of $\id(E)$, and 
$\#\prid(B)=\#\Spec(E)$.
To determine which subsets $W\subset \prid(B)$ give rise to
idempotents that lie in $A$, we define a certain graph $\Gamma(A)$
with vertex set $\Spec(E)$ such that the connected components
of $\Gamma(A)$ correspond exactly to the primitive idempotents
of $A$. This leads to Theorem \ref{idempintrothm}.

To prove Theorem \ref{muAthm}, one likewise starts from $B$, 
generators for $\mu(B)$ being easily found by
standard algorithms from algebraic number theory.
However, there is no standard way of computing $\mu(A) = \mu(B)\cap A$,
which is the intersection of a multiplicative group and an additive group,
and we must proceed in an indirect way.
For a prime number $p$, denote by $\mu(A)_p$ the group of roots of unity
in $A$ that are of $p$-power order, and likewise $\mu(B)_p$.
Then $\mu(A)$ is generated by its subgroups $\mu(A)_p =  \mu(B)_p \cap A$,
with $p$ ranging over the set of primes dividing $\#\mu(B)$;
all these $p$ are ``small".
It will now suffice to fix $p$ and determine generators for  $\mu(A)_p$.
To this end, we introduce the intermediate order
$A \subset C \subset B$ defined by $C =  A[1/p] \cap B$.
The finite abelian group $B/C$ is of order coprime to $p$,
and it turns out that this makes it relatively easy to determine
$\mu(C)_p =  \mu(B)_p \cap C$; in fact, one of the results 
(Proposition \ref{degbd}(b))
leading up to Theorem \ref{degbdintrothm}  stated above
shows that this can be done by exploiting the graph $\Gamma(C)$
that we encountered in the context of idempotents.
The passage to $\mu(A)_p =  \mu(C)_p \cap A$ is of an
entirely different nature, as $C/A$ is of order a power of $p$.
It is here that we have to invoke Theorem \ref{muAthm4} for
certain finite rings $R$ that are of $p$-power order.

It is important to realize that the only reason that an intersection
such as $\mu(A) = \mu(B)\cap A$ is hard to compute is that
$\mu(B)$, though finite, may be {\em large}---testing
each element of $\mu(B)$ for membership in $A$ will not lead
to a polynomial-time algorithm. 
By contrast, the {\em exponent} of each group $\mu(B)_p$ is
{\em small} (Lemma \ref{muApremark}(iv)), so results stating 
that certain subgroups of $\mu(B)_p$ are cyclic---of which there are
several in the paper---are valuable in obtaining a polynomial
bound for the runtime of our algorithm.

\section{Definitions and examples}

From now on, 
when we say commutative $\Q$-algebra we will mean a 
commutative $\Q$-algebra that is finite-dimensional as a $\Q$-vector space.
See \cite{AtiyahMcD,LangAlg} for background on commutative rings and linear algebra.

\begin{defn}
If $A$ is an order whose additive group
is isomorphic to $\Z^n$, we call $n$
the {\bf rank} of $A$.
\end{defn}

If the number  
of idempotents in $R$ is finite,
then each idempotent is  the sum of a unique subset of
$\prid(R)$, 
and one has $\#\id(R) = 2^{\#\prid(R)}$.

\begin{defn}
\label{conndef}
A commutative ring $R$ is called {\bf connected} if $\#\{ x\in R : x^2=x\} = 2$.
\end{defn}

\begin{defn}
If $R$ is a commutative ring, let 
$\Spec(R)$ denote the set of prime ideals of $R$.
\end{defn}

Although we do not use it, we point out
that a commutative ring $R$ is connected if and only if $R \neq 0$ and
$R$ cannot be written as a product of 2 non-zero rings.
The definition is motivated by the fact that a commutative ring $R$ is connected
if and only if $\Spec(R)$ is connected. 
(A topological space is connected if and only if it has exactly 2
open and closed subsets.)

\begin{notation}
If $G$ is a group and $p$ is a prime number, define
$$G_p = \{ g\in G : g^{p^r} =1 \text{ for some $r\in\Z_{\ge 0}$} \}.$$
\end{notation}

\begin{defn}
Suppose $R$ is a commutative ring.
A polynomial $f\in R[X]$ is {\bf separable} over $R$ if
$$
R[X] f + R[X] f' = R[X],
$$
where if $f = \sum_{i=0}^ta_iX^i$ then 
$f' = \sum_{i=1}^t ia_iX^{i-1}$.

\end{defn}

One can show that if $f$ is a monic polynomial over a commutative ring $R$,
then $f$ is separable over $R$ if and only if its discriminant is a unit in $R$.

\begin{defn}
Suppose $E$ is a  commutative $\Q$-algebra.
If $\alpha\in E$, then $\alpha$ is {\bf separable} over $\Q$
if there exists a separable polynomial $f\in\Q[X]$ such that 
$f(\alpha)=0$.
Let $E_\sep$ denote the set of $y\in E$ that are separable over $\Q$. 
We say $E$ is separable over $\Q$ if $E_{\sep}=E$.
\end{defn}

We note that $E_\sep$ is a commutative $\Q$-algebra
(see for example Theorem 1.1 of \cite{Qalgs}).

\begin{defn}
Suppose $R$ is a commutative ring.
An element $x \in R$ is called {\bf nilpotent} if there exists $n\in\Z_{>0}$ 
such that $x^n = 0$.
An ideal $I$ of $R$ is called  nilpotent if there exists $n\in\Z_{>0}$ 
such that $I^n = 0$, where $I^n$ is the product of $I$ with itself $n$ times.
The set of nilpotent elements of $R$ is an ideal, called the 
{\bf nilradical} and denoted $\sqrt{0}$ or $\sqrt{0_R}$.
\end{defn}

\begin{exs}
The polynomial 
$X^2-X$ is separable over every ring.
A linear polynomial
$aX+b$ is separable over $R$ if and only if the 
$R$-ideal generated by $a$ and $b$ is $R$. 
If $m\in\Z_{\ge 0}$, then the 
polynomial $X^m-1$ is separable over $R$ if and only if $m\cdot 1$ is a unit
in $R$. 
\end{exs}

\begin{ex}
\label{Zxfex}
Suppose $f(X)\in\Z[X]$ is a monic polynomial of degree $n$.
Then the ring 
$\Z[X]/(f)$ is an order of rank $n$.
We remark that 
the map $e \mapsto \gcd(e,f)$
is a bijection from the set of idempotents of $\Z[X]/(f)$ to 
$\{ g\in\Z[X] : \text{ $g$ is monic, } g|f, \text{ and } R(g,f/g)=\pm 1 \},$
where $R(g,f/g)$ is the resultant of $g$ and $f/g$.
\end{ex}

\begin{ex}
If $G$ is a finite group of order $2n$ with a fixed element $u$
of order $2$, then  $\Z\langle\G\rangle = \Z[G]/(u+1)$
is a connected order of rank $n$, and $\mu(\Z\langle\G\rangle) = G$
(see Remark 16.3 of \cite{LwS}).
\end{ex}

\begin{ex}
If $n\in\Z_{> 0}$ and 
$A = \{ (a_i)_{i=1}^{n} \in\Z^n : a_i \equiv a_j \mod 2 \text{ for all $i,j$}\}$
 with componentwise addition and multiplication,
then $A$ is a connected order, 
$\mu(A) = \{ (\pm 1,\ldots,\pm 1) \}$, and $\#\mu(A) = 2^n$.
For large $n$, computing a set of generators for $\mu(A)$
is feasible, even when listing all elements of $\mu(A)$ is not.
\end{ex}

\begin{ex}
Suppose $A=\Z[\zeta_p]$, where $p$ is a prime and
$\zeta_p$ is a primitive $p$-th root
of unity in $\C$. Then $A$ has rank $p-1$. 
If $p>2$, then $\mu(A)=\langle \zeta_p\rangle \times \langle -1\rangle$.
\end{ex}

\section{Finite $\Q$-algebras}
\label{Qalssect}

The following two results are from commutative algebra.
These results and basic algorithms for
commutative $\Q$-algebras are given in \cite{Qalgs}.

\begin{prop}
\label{CRTforEred}
If $E$ is a  commutative $\Q$-algebra, 
then the map $$E_\sep \oplus \sqrt{0} \isom E, \quad
(x,y)\mapsto x+y$$ is an isomorphism of
$\Q$-vector spaces, and 
the natural map 
$E \to \prod_{\m\in\Spec(E)} E/\m$ induces an isomorphism
of $\Q$-algebras
$$E_{\sep} \isom \prod_{\m\in\Spec(E)} E/\m.$$
\end{prop}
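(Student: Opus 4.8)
The plan is to reduce everything to the structure theory of Artinian commutative rings. Since $E$ is a finite-dimensional $\Q$-algebra, it is Artinian, so $\Spec(E)$ is finite, every prime ideal is maximal, and the nilradical $\sqrt{0}$ equals the Jacobson radical $\bigcap_{\m\in\Spec(E)}\m$. The Chinese Remainder Theorem then gives an isomorphism of $\Q$-algebras $E/\sqrt{0} \isom \prod_{\m\in\Spec(E)} E/\m$, because the finitely many maximal ideals $\m$ are pairwise comaximal. Each residue field $E/\m$ is a finite field extension of $\Q$, hence separable over $\Q$ in the sense of characteristic zero; so every element of $\prod_\m E/\m$ is a zero of a separable polynomial in $\Q[X]$ (take the product of the minimal polynomials of its components, or note that $E/\m$ is generated over $\Q$ by a primitive element whose minimal polynomial is separable). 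Since $\sqrt{0}$ is nilpotent (again because $E$ is Artinian), idempotents and separability lift modulo $\sqrt{0}$: more precisely, I would show $E_\sep \cap \sqrt{0} = 0$ and that the composite $E_\sep \hookto E \onto E/\sqrt{0}$ is surjective.

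For injectivity of $E_\sep \to E/\sqrt{0}$: if $\alpha \in E_\sep$ is nilpotent and $f \in \Q[X]$ is separable with $f(\alpha)=0$, write $af + bf' = 1$ with $a,b \in \Q[X]$. Then in the local-at-each-$\m$ picture, or directly, $\alpha$ nilpotent forces $f(0) \equiv 0$ modulo the constant term considerations — cleanest is: the subalgebra $\Q[\alpha] \cong \Q[X]/(g)$ for $g$ the minimal polynomial of $\alpha$, and $g \mid f^{N}$ or rather $g$ divides a power of... actually the crispest argument: $\Q[\alpha]$ is a finite $\Q$-algebra in which $\alpha$ is both nilpotent and a zero of a separable polynomial, so $\Q[\alpha]_\sep = \Q[\alpha]$ would follow, but a separable polynomial cannot have a nonzero nilpotent zero because over a field a separable polynomial has simple roots and $\Q[X]/(f)$ is reduced when $f$ is separable; since $\Q[\alpha]$ embeds in $\Q[X]/(g)$ with $g \mid f$, and $f$ separable implies $g$ separable implies $\Q[\alpha]$ reduced, we get $\alpha = 0$. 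For surjectivity: given $\bar y \in E/\sqrt{0} \cong \prod_\m E/\m$, pick a separable polynomial $f \in \Q[X]$ with $f$ vanishing on all components of $\bar y$ and with $f'$ a unit modulo $(f)$; since $\sqrt{0}$ is nilpotent and $f$ is separable, Hensel's lemma (Newton iteration) lifts any root of $f$ modulo $\sqrt{0}$ to a genuine root in $E$, and that root lies in $E_\sep$ by construction. Hence $E_\sep \isom E/\sqrt{0} \isom \prod_\m E/\m$, which is the second isomorphism, and combining with the nilpotence of $\sqrt{0}$ (so $E_\sep \oplus \sqrt{0} \to E$ is a $\Q$-linear bijection by counting dimensions and using $E_\sep \cap \sqrt{0}=0$) gives the first.

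The main obstacle I expect is the Hensel-lifting step establishing surjectivity of $E_\sep \to E/\sqrt{0}$: one must produce a single separable polynomial $f$ over $\Q$ annihilating a given element modulo the nilradical, and then run Newton's iteration $\alpha_{k+1} = \alpha_k - f(\alpha_k)/f'(\alpha_k)$ in $E$, checking that $f'(\alpha_k)$ stays invertible and that the iteration converges because $\sqrt{0}^{\,n}=0$. Everything else — finiteness of $\Spec(E)$, CRT over the maximal ideals, reducedness of $\Q[X]/(f)$ for separable $f$, the dimension count — is routine Artinian ring theory. Since the excerpt says these results are proved in \cite{Qalgs}, I would cite that for the Hensel-lifting lemma and keep the present argument to the CRT decomposition and the identification of $E_\sep$ with the reduced quotient.
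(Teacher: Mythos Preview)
Your argument is correct and is the standard route. Note, however, that the paper does not actually supply a proof of this proposition: it is stated together with Algorithm~\ref{findmalgor} under the rubric ``The following two results are from commutative algebra. These results and basic algorithms for commutative $\Q$-algebras are given in \cite{Qalgs}.'' So there is no proof in the paper to compare against; the authors defer entirely to the companion paper \cite{Qalgs}. Your sketch---Artinian structure theory, CRT over the finitely many maximal ideals to get $E/\sqrt{0}\isom\prod_\m E/\m$, the observation that a nilpotent zero of a separable $f\in\Q[X]$ must vanish because $\Q[X]/(g)$ is reduced for $g\mid f$, and Hensel/Newton lifting through the nilpotent ideal $\sqrt{0}$ to get surjectivity of $E_\sep\to E/\sqrt{0}$---is exactly the kind of argument one expects \cite{Qalgs} to contain, and it is complete as outlined. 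The only cosmetic wrinkle is your parenthetical ``take the product of the minimal polynomials of its components'': that product can have repeated factors, so take the lcm instead (or simply take the minimal polynomial of the element in the reduced algebra, which is automatically separable in characteristic~$0$).
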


In algorithms, we specify a commutative $\Q$-algebra $E$ 
by listing a system
of structure constants $a_{ijk}\in\Q$ 
that determines the multiplication in $E$ with respect to some
$\Q$-basis, just as we did for orders in the introduction.

\begin{algorithm}
\label{findmalgor}
There is a deterministic polynomial-time algorithm that 
given a commutative $\Q$-algebra $E$, computes 
a $\Q$-basis for $E_\sep \subset E$,
a $\Q$-basis for $\sqrt{0}$,
the map $E \isom E_\sep \oplus \sqrt{0}$ that is the inverse to the
first isomorphism from Proposition \ref{CRTforEred},
all $\m\in\Spec(E)$, the fields $E/\m$, and the natural maps $E \to E/\m$.
\end{algorithm}

\begin{lem}
\label{muApremark}
If $E$ is a commutative $\Q$-algebra, then:
\begin{enumerate}
\item
$
\mu(E) = \mu(E_\sep) \isom 
\bigoplus_{\m\in\Spec(E)}\mu(E/\m)$;

\item
$\mu(E)$ is finite;
\item
each $\mu(E/\m)$ is a finite cyclic group;
\item
if $\mu(E)$ has an element of order $p^k$ with  $p$ a prime,
then $\varphi(p^k) \le \dim_\Q(E)$, where $\varphi$ is
Euler's $\varphi$-function.
\end{enumerate}
\end{lem}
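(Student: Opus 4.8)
The plan is to derive everything from Proposition \ref{CRTforEred} together with elementary facts about roots of unity in fields. First I would establish (i). Since any root of unity is a zero of some $X^m-1$ with $m\cdot 1$ a unit, hence a zero of a separable polynomial, every root of unity lies in $E_\sep$; combined with the containment $E_\sep^\ast \subseteq E^\ast$ this gives $\mu(E)=\mu(E_\sep)$. The second isomorphism of Proposition \ref{CRTforEred} is an isomorphism of $\Q$-algebras $E_\sep \isom \prod_{\m\in\Spec(E)} E/\m$, and an isomorphism of rings carries the group of roots of unity of the source isomorphically onto that of the target; since roots of unity in a finite product of rings are exactly the tuples of roots of unity in the factors, we get $\mu(E_\sep) \isom \bigoplus_{\m} \mu(E/\m)$, written additively to match the notation. (There is nothing here about the nilradical, because the first isomorphism of Proposition \ref{CRTforEred} is only additive, not multiplicative; but we do not need it for this lemma.)

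Next, parts (ii) and (iii): each quotient $E/\m$ is a field that is finite-dimensional over $\Q$, i.e.\ an algebraic number field, and the group of roots of unity in a number field is finite and cyclic — this is a standard fact (a finite subgroup of the multiplicative group of any field is cyclic, and in a number field $K$ the torsion of $K^\ast$ is bounded because, if $\zeta$ has order $N$, then $\Q(\zeta_N)\subseteq K$ forces $\varphi(N) \le [K:\Q]$). This simultaneously gives finiteness of each $\mu(E/\m)$, cyclicity, and, since $\Spec(E)$ is finite, finiteness of $\mu(E)$ via the isomorphism in (i). So (ii) and (iii) drop out with essentially no work beyond quoting the number-field fact.

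Finally, part (iv). Suppose $\mu(E)$ contains an element of order $p^k$. By (i) this element corresponds to a tuple $(\zeta_\m)_\m$, and the order $p^k$ is the lcm of the orders of the components, so some $\zeta_\m$ has order divisible by $p^k$; replacing it by a power, some $E/\m$ contains a primitive $p^k$-th root of unity. Then $\Q(\zeta_{p^k})$ embeds in the number field $E/\m$, so $\varphi(p^k) = [\Q(\zeta_{p^k}):\Q] \le [E/\m : \Q] \le \dim_\Q(E)$, the last inequality because $E/\m$ is a quotient of $E$ as a $\Q$-vector space. I do not anticipate a genuine obstacle in any of this; the only point requiring care is keeping straight that Proposition \ref{CRTforEred} gives a \emph{ring} isomorphism only after passing to $E_\sep$, so that multiplicative structure (and hence $\mu$) transports correctly, whereas the decomposition of $E$ itself is merely additive — everything in the lemma is arranged so that we only ever need the multiplicative half.
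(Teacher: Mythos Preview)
Your proposal is correct and follows essentially the same route as the paper: both derive (i) from the separability of $X^m-1$ together with the ring isomorphism in Proposition~\ref{CRTforEred}, obtain (iii) from the fact that each $E/\m$ is a number field, and prove (iv) via the embedding $\Q(\zeta_{p^k})\subset E/\m$ and the chain $\varphi(p^k)\le [E/\m:\Q]\le \dim_\Q(E)$. You simply spell out in full the steps (the lcm argument for locating the component, the caveat that only the $E_\sep$ half of Proposition~\ref{CRTforEred} is multiplicative) that the paper leaves to the reader.
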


\begin{proof}
Part (i) holds by Proposition \ref{CRTforEred}
and the fact that 
$X^r-1$ is separable over $\Q$ for all $r\in\Z_{>0}$.
If $\mu(E)$ has an element of prime power order $p^k$, then 
$\Q(\zeta_{p^k}) \subset E/\m$ for some $\m$,
where $\zeta_{p^k}$ is a primitive $p^k$-th root of unity. 
Thus $\varphi(p^k) \le [E/\m : \Q] \le \dim_\Q(E).$
Since each $E/\m$ is a number field, 
$\mu(E/\m)$ is cyclic.

\end{proof}

\begin{algorithm}
\label{muEthmalgor}
The algorithm takes as input
a commutative $\Q$-algebra $E$ and 
produces a set of generators $S$ of $\mu(E)$ as well as
a set $R$ of defining relations for $S$.
\end{algorithm}

\begin{enumerate}
\item
For each $\nn\in\Spec(E)$, 
use the algorithm in \cite{Arjen} to
find all zeroes of $X^{r}-1$ over $E/\nn$, for 
$r = 1,2,\ldots, 2[E/\nn:\Q]^2$,
let $\zeta_{\nn}\in (E/\nn)^\ast$ be an element of
maximal order among the zeroes found, and let
$k(\nn)$ be its order.
\item
For each $\nn\in\Spec(E)$, use linear algebra to compute the unique element
$\eta_{\nn}\in E_\sep$ that under the second isomorphism from 
Proposition \ref{CRTforEred} maps to
$(1,\ldots,1,\zeta_{\nn},1,\ldots,1)\in\prod_\m \mu(E/\m)$
(with $\zeta_{\nn}$ in the $\nn$-th position).
Output 
$S = \{ \eta_\nn \in \mu(E) : \nn\in\Spec(E)\}$ 
and 
$R = \{ (0,\ldots,0,k(\nn),0,\ldots,0)\in\Z^{\Spec(E)} : \nn\in\Spec(E)\}$.
\end{enumerate}

\begin{prop}
\label{muEthmalgorworks}
Algorithm \ref{muEthmalgor}
produces correct output and runs in polynomial time.
\end{prop}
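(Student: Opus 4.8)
The plan is to verify Algorithm \ref{muEthmalgor} by checking two things separately: that each step terminates in polynomial time on inputs of the stated shape, and that the output pair $(S,R)$ genuinely presents $\mu(E)$. For the running time, the first observation is that Algorithm \ref{findmalgor} already produces, in polynomial time, all the data that Step (1) and Step (2) depend on: the prime ideals $\nn$, the number fields $E/\nn$ together with their degrees $d_\nn = [E/\nn:\Q]$, and the maps $E\to E/\nn$. Since $\sum_\nn d_\nn \le \dim_\Q(E)$, there are at most $\dim_\Q(E)$ prime ideals, so it suffices to bound the work done for a single $\nn$ by a polynomial in the input size. In Step (1) one runs the root-finding algorithm of \cite{Arjen} over the number field $E/\nn$ for each $r \le 2d_\nn^2$; this is a polynomial number of invocations, each polynomial-time, and comparing orders to extract $\zeta_\nn$ and $k(\nn)$ is trivial. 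Step (2) is a single linear-algebra computation over $\Q$ — inverting the isomorphism of Proposition \ref{CRTforEred} on a specific vector — and assembling the output lists $S$ and $R$, both of which are polynomial-time.

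For correctness, I would argue that for each $\nn$ the element $\zeta_\nn$ found in Step (1) is in fact a generator of the full cyclic group $\mu(E/\nn)$, not merely of its subgroup of elements of order at most $2d_\nn^2$. By Lemma \ref{muApremark}(iii) the group $\mu(E/\nn)$ is finite cyclic; write $k$ for its order and factor $k = \prod_p p^{k_p}$. By Lemma \ref{muApremark}(iv) applied to the $\Q$-algebra $E/\nn$, each prime power $p^{k_p}$ dividing $k$ satisfies $\varphi(p^{k_p}) \le d_\nn$, hence $p^{k_p} \le 2\varphi(p^{k_p}) \le 2d_\nn$ when $p^{k_p} > 1$ (using $p^{k_p} \le 2\varphi(p^{k_p})$, which holds for all prime powers), so every prime power exactly dividing $k$ is at most $2d_\nn$. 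It follows that $k$ itself — being the product of these coprime prime powers, of which there are at most $d_\nn$ — is at most $(2d_\nn)^{d_\nn}$; but a sharper and cleaner bound is available. Since a cyclic group of order $k$ has an element of order $p^{k_p}$ for each $p$, and the zeroes of $X^r - 1$ over $E/\nn$ are exactly the elements of $\mu(E/\nn)$ of order dividing $r$, it is enough to know that $k \le 2d_\nn^2$ to conclude that a generator of $\mu(E/\nn)$ appears among the zeroes found for $r = k$. And indeed $k = \prod_p p^{k_p} \le \prod_p 2\varphi(p^{k_p}) \le 2^{\omega(k)}\varphi(k) \le 2^{\omega(k)} d_\nn$; one then checks $2^{\omega(k)} \le d_\nn$ when $d_\nn$ is not too small and handles the small cases directly, or more simply one uses that $\varphi(k)\le d_\nn$ forces $k \le 2 d_\nn \le 2 d_\nn^2$ whenever $k$ is a prime power, and a short elementary estimate that $k\le \varphi(k)^2$ for $k\ge 1$ gives $k \le d_\nn^2 \le 2d_\nn^2$ in general. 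Either way, $k \le 2d_\nn^2$, so $\zeta_\nn$ has order exactly $k$ and generates $\mu(E/\nn)$, and $k(\nn) = k$.

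Given that each $\zeta_\nn$ generates the cyclic group $\mu(E/\nn)$ of order $k(\nn)$, the rest is formal. By Lemma \ref{muApremark}(i), the natural map $\mu(E) = \mu(E_\sep) \to \prod_{\m} \mu(E/\m)$ is an isomorphism of abelian groups, and under the second isomorphism of Proposition \ref{CRTforEred} the element $\eta_\nn$ corresponds to the tuple that is $\zeta_\nn$ in the $\nn$-component and $1$ elsewhere; hence $\mu(E) \cong \bigoplus_\nn \langle \zeta_\nn\rangle$ with $\langle\zeta_\nn\rangle$ cyclic of order $k(\nn)$, generated by $\eta_\nn$. Thus $S = \{\eta_\nn\}$ generates $\mu(E)$, and the kernel of $\Z^S \to \mu(E)$ is exactly $\bigoplus_\nn k(\nn)\Z$, which is precisely the subgroup of $\Z^{\Spec(E)}$ generated by the vectors in $R$. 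Therefore $R$ is a set of defining relations for $S$, and the output is correct.

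The step I expect to be the main obstacle — or at least the only place requiring genuine care rather than bookkeeping — is justifying that the cutoff $r \le 2[E/\nn:\Q]^2$ in Step (1) is large enough to capture a generator of $\mu(E/\nn)$; everything else is either a direct appeal to Algorithm \ref{findmalgor}, the root-finding algorithm of \cite{Arjen}, Lemma \ref{muApremark}, and Proposition \ref{CRTforEred}, or a routine linear-algebra and list-assembly argument. The cutoff argument reduces to the elementary number-theoretic inequality relating $\#\mu(E/\nn)$ to $[E/\nn:\Q]$ via $\varphi$, which follows from Lemma \ref{muApremark}(iv) applied prime-by-prime together with the standard bound $k \le \varphi(k)^2$ for positive integers $k$.
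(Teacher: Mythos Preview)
Your approach is the same as the paper's --- the crux is the bound $k \le 2d_\nn^2$ for $k = \#\mu(E/\nn)$, combined with Lemma~\ref{muApremark}(i) and the polynomial-time root-finding of \cite{Arjen} --- but your execution of that one key step has two slips.

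First, the inequality you invoke, ``$k \le \varphi(k)^2$ for $k\ge 1$'', is false: take $k=2$ (then $\varphi(k)^2=1$) or $k=6$ (then $\varphi(k)^2=4$). The correct elementary bound, and the one the paper uses, is $k \le 2\varphi(k)^2$, equivalent to the standard estimate $\varphi(k)\ge\sqrt{k/2}$. With the factor of $2$ restored you get exactly $k \le 2\varphi(k)^2 \le 2d_\nn^2$, which is what you need. Second, you repeatedly use $\varphi(k) \le d_\nn$ for the full order $k$, but Lemma~\ref{muApremark}(iv) as stated only gives $\varphi(p^{k_p}) \le d_\nn$ for each prime power in $k$, and these prime-power bounds do not combine to bound $\varphi(k)=\prod_p\varphi(p^{k_p})$. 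The fix is immediate and is what the paper does directly: since $E/\nn$ contains a primitive $k$-th root of unity it contains $\Q(\zeta_k)$, whence $\varphi(k)=[\Q(\zeta_k):\Q]\le [E/\nn:\Q]=d_\nn$. Once you argue this way, the detour through prime powers, $\omega(k)$, and ``small cases'' becomes unnecessary --- the paper's proof is the two-line version of your correctness paragraph.
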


\begin{proof}
If the number field $E/\nn$ contains a primitive $r$-th root of unity, then it contains the $r$-th cyclotomic field, which has degree $\varphi(r)$ over $\Q$;
hence $\varphi(r) \le [E/\nn:\Q]$ and $r \le 2\varphi(r)^2 \le 2[E/\nn:\Q]^2.$
Together with Lemma \ref{muApremark}(i),
this implies that the algorithm is correct.
It runs in polynomial time by \cite{Arjen}.
\end{proof}

\begin{algorithm}
\label{muEthmalgor2}
The algorithm
takes as input a commutative $\Q$-algebra $E$, an element $\gamma\in E$,
and a set 
$S = \{ \eta_\nn \in \mu(E) : \nn\in\Spec(E)\}$ of generators for $\mu(E)$
as computed by Algorithm \ref{muEthmalgor}.
It tests whether $\gamma\in\mu(E)$,
and if so, finds $(a_\nn)_{\nn\in \Spec(E)}\in\Z^{\Spec(E)}$ 
with $\gamma = \prod_{\nn\in \Spec(E)}\eta_\nn^{a_\nn}$.
\end{algorithm}

\begin{enumerate}
\item
Use linear algebra to test if 
$\gamma \in E_\sep$.
If not, terminate with ``no'' (that is, $\gamma\not\in\mu(E)$).
\item
Otherwise, for each $\nn\in\Spec(E)$ 
compute the image $\gamma_\nn$ of $\gamma$ in $E/\nn$, and 
let $\zeta_\nn$ (as in Algorithm \ref{muEthmalgor}) be the
image of $\eta_\nn$ in $E/\nn$.
Try 
$a=0,1,2,\ldots,\#\mu(E/\nn) -1$ until $\gamma_\nn = \zeta_\nn^a$,
and let $a_\nn = a$.
If for some $\nn$ no $a_\nn$ exists, terminate with ``no''.
\item
Otherwise, output
$(a_\nn)_{\nn\in \Spec(E)}$.
\end{enumerate}

That Algorithm \ref{muEthmalgor2}
produces correct output and runs in polynomial time
follows from Lemma \ref{muApremark}, since 
$\mu(E/\nn) = \langle \zeta_\nn \rangle$.

\section{Orders}
\label{orderssect}

From now on, suppose that $A$ is an order.
Let 
$$
E = A_\Q = A \otimes_\Z\Q, \qquad
A_\sep = A \cap E_\sep.
$$

Since $E_\sep/A_\sep \subset E/A = A_\Q/A$ is a torsion group,
one has $E_\sep = (A_\sep)_\Q$.

\begin{lem}
\label{idmulem}
We have $\id(E_\sep)=\id(E)$,   
$\id(A_\sep) = \id(A)$, and $\mu(A_\sep) = \mu(A)$.
\end{lem}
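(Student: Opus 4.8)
The three claimed equalities are all instances of the same phenomenon: idempotents and roots of unity are automatically separable, so passing to the separable subring loses nothing. The plan is to prove each by showing the relevant elements of $E$ (resp.\ $A$) already lie in $E_\sep$ (resp.\ $A_\sep$).

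First I would handle $\id(E_\sep) = \id(E)$. The inclusion $\subseteq$ is immediate since $E_\sep$ is a subring of $E$. For $\supseteq$, let $e \in \id(E)$. Then $e$ is a zero of $f = X^2 - X \in \Q[X]$, which is separable over $\Q$ (indeed $(f')^2 - 4f = 1$, as noted in the introduction), so by definition $e \in E_\sep$. This gives $\id(E) \subseteq \id(E_\sep)$, hence equality. The statement $\mu(A_\sep) = \mu(A)$ is handled the same way: for $A_\sep = A \cap E_\sep$ the inclusion $\mu(A_\sep) \subseteq \mu(A)$ is clear, and conversely any $\zeta \in \mu(A) \subseteq \mu(E)$ satisfies $\zeta^r = 1$ for some $r \in \Z_{>0}$, so $\zeta$ is a zero of $X^r - 1$, which is separable over $\Q$; thus $\zeta \in E_\sep$, and since also $\zeta \in A$ we get $\zeta \in A \cap E_\sep = A_\sep$. (One may alternatively invoke Lemma \ref{muApremark}(i), which already records $\mu(E) = \mu(E_\sep)$, and intersect with $A$.)

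For $\id(A_\sep) = \id(A)$ the argument is identical in spirit: $\id(A_\sep) \subseteq \id(A)$ trivially, and if $e \in \id(A)$ then $e$ is a zero of the separable polynomial $X^2 - X$, so $e \in A \cap E_\sep = A_\sep$. So in fact all three equalities reduce to the single observation that $X^2 - X$ and $X^r - 1$ (for $r \ge 1$) are separable over $\Q$, together with the trivial inclusions coming from $A_\sep \subseteq A$ and $E_\sep \subseteq E$.

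I do not anticipate a genuine obstacle here; the only point requiring a moment's care is making sure one uses the right definition of ``separable over $\Q$'' for an algebra element — namely the existence of \emph{some} separable annihilating polynomial in $\Q[X]$, not separability of the minimal polynomial — and that $X^2 - X$ and $X^r-1$ genuinely qualify as separable polynomials over $\Q$ in the sense of the paper (they do, since $\Q$ is a field of characteristic $0$, or directly by exhibiting a B\'ezout identity). One should also note in passing that $E_\sep$ is known to be a subring (cited after its definition), which is what makes ``$e \in E_\sep \cap A$ is an idempotent of $A_\sep$'' meaningful.
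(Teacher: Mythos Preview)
Your proposal is correct and follows essentially the same approach as the paper: the paper's proof is the one-line observation that $X^2-X$ and $X^r-1$ are separable over $\Q$ for all $r\in\Z_{>0}$, and you have simply unpacked the trivial inclusions and the consequence of that observation in detail.
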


\begin{proof}
This holds because the polynomials
$X^2-X$ and $X^r-1$ are separable over $\Q$ for all $r\in\Z_{>0}$.
\end{proof}

\begin{algorithm}
\label{Asepalgor}
The algorithm takes as input an order $A$ and it computes
the $\Q$-algebras  $E$ and $E_\sep\subset E$, 
as well as the order $A_\sep = A \cap E_\sep$,
giving 
a $\Z$-basis for $A_\sep$  
expressed both in the given
$\Z$-basis of $A$
and in the 
$\Q$-basis for $E_\sep$.
\end{algorithm}

\begin{enumerate}
\item
We use the given $\Z$-basis for $A$ as a $\Q$-basis for $E$,
with the same structure constants.
\item
Let $\pi_1 : A \to E_\sep$ and $\pi_2 : A \to \sqrt{0}$ be
the compositions of the inclusion $A\subset E$ with the map
$E \isom E_\sep \oplus \sqrt{0}$ from Algorithm \ref{findmalgor} 
followed by the natural projections to $E_\sep$ and $\sqrt{0}$,
respectively.
Using Algorithm \ref{findmalgor}, compute 
a $\Q$-basis for $E_\sep$ and 
the rational matrices
describing $\pi_1$ and $\pi_2$.
Applying the kernel algorithm in \S 14 of
\cite{HWLMSRI} to an integer multiple of the 
matrix for $\pi_2$, compute a $\Z$-basis for $A_\sep=\ker(\pi_2)$ 
expressed in the given $\Z$-basis for $A$.
Applying $\pi_1$ to this $\Z$-basis, one obtains the same
$\Z$-basis expressed in the  
$\Q$-basis for $E_\sep$.
\end{enumerate}

Algorithm \ref{Asepalgor} is clearly correct and polynomial time.

\section{Graphs attached to rings}

\begin{lem}
\label{idempotClem}
Suppose that $R$ is a commutative ring,
$\SS$ is a finite set of ideals of $R$ that are not $R$ itself,
and suppose that $\bigcap_{\a\in \SS} \a = \{ 0\}$.
Identify $R$ with its image in $\prod_{\a\in \SS} R/\a$.
Suppose that $e = (e_\a)_{\a\in \SS} \in \{ 0,1\}^\SS \subset \prod_{\a\in \SS} R/\a$.
Then $e\in R$ if and only if 
$e_\a = e_\b$ in $\{ 0,1\}$
for all $\a, \b \in \SS$ such that
$\a + \b \neq R$.
\end{lem}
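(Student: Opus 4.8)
The plan is to prove the two implications separately; the forward one is immediate from the definitions, and the converse is the substantive part. For the forward implication, suppose $e$ is the image of some $x\in R$, so that $x$ maps to $e_\a$ in $R/\a$ for every $\a\in\SS$; equivalently $x-e_\a\in\a$, where we identify $e_\a$ with the corresponding element $0$ or $1$ of $R$. If $\a,\b\in\SS$ satisfy $\a+\b\neq R$, then from $x-e_\a\in\a\subseteq\a+\b$ and $x-e_\b\in\b\subseteq\a+\b$ we get $e_\a-e_\b\in\a+\b$. Since $e_\a-e_\b\in\{0,\pm1\}$ and $1\notin\a+\b$, this forces $e_\a-e_\b=0$, i.e.\ $e_\a=e_\b$.

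For the converse, assume $e_\a=e_\b$ whenever $\a+\b\neq R$; equivalently, $\a+\b=R$ whenever $e_\a\neq e_\b$. Partition $\SS=\SS_0\sqcup\SS_1$ by setting $\SS_j=\{\a\in\SS:e_\a=j\}$ for $j\in\{0,1\}$, and put $I=\bigcap_{\a\in\SS_0}\a$ and $J=\bigcap_{\b\in\SS_1}\b$, with an empty intersection understood to be $R$. The crux is to show $I+J=R$. I would deduce this from the standard fact that an ideal $\cc$ comaximal with each of finitely many ideals $\b_1,\dots,\b_k$ is comaximal with their product, hence with their intersection; this in turn follows from $R=\prod_{i}(\cc+\b_i)\subseteq\cc+\b_1\cdots\b_k$. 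Since $e_\a\neq e_\b$ for $\a\in\SS_0$ and $\b\in\SS_1$, each $\a\in\SS_0$ is comaximal with every $\b\in\SS_1$, hence with $J$; applying the fact once more with the family $\SS_0$, the ideal $J$ is comaximal with $I$, so $I+J=R$. Writing $1=i+j$ with $i\in I$ and $j\in J$, I claim $x:=i$ represents $e$: for $\a\in\SS_0$ one has $x=i\in\a$, so $x\equiv0=e_\a\pmod{\a}$, and for $\b\in\SS_1$ one has $x=1-j\equiv1=e_\b\pmod{\b}$ since $j\in\b$. Hence $e\in R$.

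I expect the only genuine obstacle to be the comaximality argument in the second paragraph, specifically remembering to invoke the ``comaximal with a product'' lemma twice — once to absorb the family $\SS_1$ into $J$ and once to absorb $\SS_0$ into $I$ — together with a little care in the degenerate cases $\SS_0=\emptyset$ or $\SS_1=\emptyset$, where one of $I,J$ equals $R$ and the corresponding batch of congruences is vacuous (then $x=1$ or $x=0$ works). Everything else is formal. As a side remark worth recording, the element $x$ produced above is automatically idempotent: $x^2-x$ reduces to $0^2-0$ or $1^2-1$ modulo each $\a\in\SS$, hence lies in $\bigcap_{\a\in\SS}\a=\{0\}$.
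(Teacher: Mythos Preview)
Your proof is correct and follows essentially the same route as the paper: the forward implication is handled identically, and for the converse both you and the paper partition $\SS$ according to the value of $e_\a$ and exploit pairwise comaximality across the two pieces. The only difference is packaging: you invoke the standard ``comaximal with each, hence comaximal with the intersection'' lemma to get $I+J=R$ and then take $x=i$, whereas the paper writes out the witness explicitly as $e'=1-\prod_{\a\in T}(1-z_\a)$ with $z_\a=\prod_{\b\in U}y_{\a,\b}$, which unwinds to exactly the same thing.
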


\begin{proof}
First suppose $e\in R$. 
Suppose $\a, \b \in \SS$ and $\a + \b \neq R$.
Choose $e'_\a  \in \{ 0,1\} \subset R$ whose image in $R/\a$ is
$e_\a = e + \a$, and choose $e'_\b  \in \{ 0,1\} \subset R$ whose image in $R/\b$ is
$e_\b = e + \b$.
Then $e'_\a \equiv e$ mod $\a$ and $e'_\b \equiv e$ mod $\b$,
so $e'_\a \equiv e \equiv e'_\b$ mod $(\a + \b)$. 
Since $\a + \b \neq R$ we have $1\not\in \a + \b$.
Thus, $e'_\a = e'_\b$ in $\{ 0,1\}$, as desired.

Conversely, suppose that $e_\a = e_\b$ in $\{ 0,1\}$
for all $\a, \b \in \SS$ with $\a + \b \neq R$.
Let $T = \{ \a\in \SS : e_\a = 1\}$ and $U = \{ \b\in \SS : e_\b = 0\}$.
Then $\SS = T \sqcup U$.
Pick $\a\in T$ and $\b\in U$.
By our assumption, 
$\a + \b = R$. Thus, there exist $x_{\a,\b} \in\a$ and
$y_{\a,\b} \in\b$ such that $1 = x_{\a,\b} + y_{\a,\b}$.
It follows that $y_{\a,\b} \equiv 1$ mod $\a$ and
$y_{\a,\b} \equiv 0$ mod $\b$.
For all $\a\in T$, define 
$z_\a = \prod_{\b\in U}y_{\a,\b} \in R$.
Then $z_\a \equiv 1$ mod $\a$ and
$z_\a \equiv 0$ modulo each $\b\in U$.
Define $e' = 1 - \prod_{\a\in T}(1 - z_\a) \in R$.
Then $e' \equiv 1$ modulo each $\a\in T$, and
$e' \equiv 0$ modulo each $\b\in U$.
Thus, $e' \equiv e_\a$ mod $\a$ for each $\a\in \SS$, so $e'=e$.
\end{proof}

We say that $D$ is an order in a separable $\Q$-algebra if
$D$ is an order and $D_\Q=D\otimes_\Z\Q$ is separable.

\begin{defn}
\label{graphdef}
Suppose that $D$ is an order in a separable $\Q$-algebra $D_\Q$.
For $\m, \nn \in\Spec(D_\Q)$ with $\m \neq \nn$,
let 
$$
n(D,\m,\nn)=\#(D/((\mm\cap D) + (\nn\cap D))), 
$$ 
and let $\Gamma(D)$ denote the graph on $\Spec(D_\Q)$ defined by connecting distinct
vertices $\m, \nn \in\Spec(D_\Q)$   
by an edge
if and only if $n(D,\m,\nn) > 1$.
\end{defn}

\begin{lem}
\label{nDlem}
$n(D,\m,\nn) \in \Z_{>0}$.
\end{lem}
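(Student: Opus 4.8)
The claim is that $n(D,\m,\nn)$ is a positive integer, which amounts to showing that the quotient ring $D/((\mm\cap D)+(\nn\cap D))$ is finite and nonzero. The plan is to treat these two assertions separately, since they come from quite different sources. Finiteness will follow from the fact that $D$ is an order, hence its additive group is $\Z^n$, and any nonzero ideal of $D$ has finite index: the point is that $\mm\cap D$ is a nonzero ideal, because clearing denominators of any nonzero element of $\mm$ produces a nonzero element of $\mm\cap D$, and an order modulo a nonzero ideal is finite (the ideal contains a nonzero integer $N$, as $N=\norm(\alpha)$ for suitable $\alpha$, or more simply because $D/(\mm\cap D)$ embeds in $D_\Q/\mm$, which is finite-dimensional over $\Q$ while the image of $D$ is finitely generated as an abelian group and torsion-free-by-finite). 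Thus $D/((\mm\cap D)+(\nn\cap D))$, being a quotient of the finite ring $D/(\mm\cap D)$, is finite, so $n(D,\m,\nn)<\infty$.

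For nonvanishing, the key is that $\m\neq\nn$ are distinct primes of the separable algebra $D_\Q$. By Proposition \ref{CRTforEred}, since $D_\Q$ is separable we have $D_\Q\isom\prod_{\p\in\Spec(D_\Q)}D_\Q/\p$, so in particular $\m+\nn=D_\Q$ (distinct maximal ideals in a product of fields are comaximal). Hence there exist $x\in\m$, $y\in\nn$ with $x+y=1$ in $D_\Q$. I would then need to descend this to $D$: the ideal $(\mm\cap D)+(\nn\cap D)$ of $D$ need not be all of $D$, but I claim it is nonzero and, crucially, that the quotient ring $D/((\mm\cap D)+(\nn\cap D))$ is nonzero, i.e. that $(\mm\cap D)+(\nn\cap D)\neq D$ is \emph{not} what we want — rather we want it genuinely could equal $D$ or not, and either way $n\geq 1$. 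Wait: $n(D,\m,\nn)=\#(D/I)$ where $I=(\mm\cap D)+(\nn\cap D)$; if $I=D$ then $D/I=\{0\}$ has cardinality $1$, and if $I\subsetneq D$ then $D/I$ is a nonzero finite ring, so $n\geq 2$. In \emph{all} cases $n\geq 1$. So nonvanishing is automatic once finiteness is established, provided $D/I$ is a well-defined ring with at least one element, which it is as long as $D\neq\{0\}$ — and even if $D=\{0\}$ the set $\Spec(D_\Q)$ is empty so there are no pairs $\m\neq\nn$ to consider.

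Therefore the only real content is finiteness of $D/(\mm\cap D)$, hence of the further quotient. The main obstacle — such as it is — is checking that $\mm\cap D$ is nonzero so that $D/(\mm\cap D)$ is finite; this is where separability is not even needed, only that $D$ is an order and $\m$ is a proper (in fact maximal, but properness suffices) ideal of $D_\Q$, and that $D$ spans $D_\Q$ over $\Q$. Concretely: the composite $D\hookrightarrow D_\Q\onto D_\Q/\m$ has image a subring of the finite-dimensional $\Q$-algebra $D_\Q/\m$ that is finitely generated as an abelian group and whose $\Q$-span is all of $D_\Q/\m$; such a subring is a full lattice, so $D/(\mm\cap D)\cong\image(D)$ has a finite-index copy of $\Z^{\dim_\Q(D_\Q/\m)}$ — no wait, that would make it infinite. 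The correct statement is that $\mm$ being a \emph{proper} ideal does not force $\mm\cap D$ nonzero; but $\mm$ being a \emph{prime} ideal of the Artinian ring $D_\Q$ makes it maximal, hence $D_\Q/\m$ is a field (a number field), and $\mm\neq\{0\}$ unless $D_\Q$ is itself a field with $\m=\{0\}$, in which case $n(D,\m,\nn)$ is never formed since there is only one prime. When there are at least two primes, each $\m$ is nonzero (it is the kernel of a projection that is not injective), so picking $0\neq\alpha\in\m$ and writing $\alpha=\beta/N$ with $\beta\in D$, $N\in\Z_{>0}$, we get $0\neq\beta\in\mm\cap D$; then $\beta$ is a non-zero-divisor or not, but in any case $\beta D$ has finite index in $D$ (multiplication by $\beta$ on $\Z^n$ has nonzero determinant when $\beta$ is a non-zero-divisor; in general one uses that $\mm\cap D$ contains a nonzero rational integer, namely a suitable norm or the constant term of the minimal polynomial of $\beta$ times a unit, which is nonzero since $\beta\ne0$ lies in a field-quotient — actually simplest: $D/(\mm\cap D)$ injects into $D_\Q/\m$, which forces $\dim_\Q$ of its $\Q$-span to be finite, but $\mathrm{image}(D)$ is a f.g. abelian group of rank equal to that $\Q$-span dimension, so it is finite iff that dimension drops to... ). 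I will simply invoke the standard fact that any nonzero ideal in an order has finite index, which follows because such an ideal contains a nonzero integer. This completes the plan: finiteness gives $n<\infty$, and $\#(D/I)\geq 1$ trivially, so $n(D,\m,\nn)\in\Z_{>0}$.
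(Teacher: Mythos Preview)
Your argument has a genuine gap. The ``standard fact'' you invoke at the end --- that any nonzero ideal in an order has finite index, because it contains a nonzero integer --- is simply false for orders that are not integral domains. Take $D=\Z\times\Z$ and $\m=\Q\times\{0\}\subset D_\Q=\Q\times\Q$. Then $\m\cap D=\Z\times\{0\}$ is a nonzero ideal containing no nonzero integer (no element $(n,n)$ with $n\neq0$), and $D/(\m\cap D)\cong\Z$ is infinite. This is not an accident: as the paper observes just after \eqref{Bdefn}, the quotient $D/(\m\cap D)$ is an \emph{order} in the number field $D_\Q/\m$, hence always infinite. Your reduction to finiteness of $D/(\m\cap D)$ therefore cannot succeed.

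The correct proof, which the paper gives, uses \emph{both} primes at once. You already noted that $\m+\nn=D_\Q$ since distinct primes of the separable algebra $D_\Q\cong\prod_\p D_\Q/\p$ are comaximal. The point is to apply this to the \emph{sum} $(\m\cap D)+(\nn\cap D)$: writing $R=D/((\m\cap D)+(\nn\cap D))$ and tensoring with $\Q$ (which is exact), one gets $R\otimes_\Z\Q = D_\Q/(\m+\nn)=0$. So $R$ is a finitely generated torsion abelian group, hence finite. You had the key observation $\m+\nn=D_\Q$ in hand but spent it on the nonvanishing half, which as you correctly noticed is automatic (cardinality of any quotient is $\geq 1$); it is the finiteness half where comaximality does the work.
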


\begin{proof}
Let
$
R = D/((\mm\cap D) + (\nn\cap D)).
$
Then $n(D,\m,\nn) =\# R$.
Letting $-_\Q = -\otimes_\Z \Q$,
we have 
$$
R_\Q = D_\Q/((\mm_\Q\cap D_\Q) + (\nn_\Q\cap D_\Q))
= D_\Q/(\m +\nn) =0
$$
so $R$ is torsion. Since $R$ is finitely generated
as an abelian group, it is finite,
so $n(D,\m,\nn) \in \Z_{>0}$.
\end{proof}

\begin{ex}
Let $r\in\Z[X]$ be monic. Then $D=\Z[X]/(f)$ is an order in a
separable $\Q$-algebra if and only if $f$ is squarefree.
Suppose $f$ is squarefree. Then $D_\Q =\Q[X]/(f)$, and
$\Spec(D_\Q)$ is in bijection with the set of monic irreducible
factors $g$ of $f$ in $\Z[X]$, each $g$ corresponding to
$\m = (g)/(f)$. If $g,h$ correspond to $\m,\nn$, respectively,
then $n(D,\m,\nn) = |R(g,h)|$,
with $R$ denoting the resultant.
\end{ex}

Suppose $D$ is an order in a separable $\Q$-algebra.
It is natural to ask whether the decomposition 
$D_\Q \isom \prod_{\m\in\Spec(D_\Q)}D_\Q/\m$ (Proposition \ref{CRTforEred})
gives rise to
a decomposition of the order $D$. This depends on the idempotents
that are present in $D$. The graph $\Gamma(D)$ tells us which idempotents
occur in $D$ (see Lemma \ref{idempotClem} and Proposition \ref{bijgraphprop}).

\begin{notation}
\label{eWdefn}
Suppose that $D$ is an order in a separable $\Q$-algebra.
If $W \subset \Spec(D_\Q)$, define 
$$
e_W = (e_\m)_{\m\in \Spec(D_\Q)} \in\id(\prod_{\m\in \Spec(D_\Q)} D_\Q/\m) = \{0,1\}^{\Spec(D_\Q)}
$$
by $e_\m = 1$ if $\m\in W$ and $e_\m = 0$ if $\m\not\in W$.
\end{notation}

\begin{algorithm}
\label{graphalgor}
The algorithm takes an order $D$ in a separable $\Q$-algebra 
and computes the graph $\Gamma(D)$, 
 its connected components,
and its weights $n(D,\m,\nn)$ for all $\m,\nn\in\Spec(D_\Q)$.
\begin{enumerate}
\item
Use Algorithm \ref{findmalgor} to compute $\Spec(D_\Q)$ and
the maps $D_\Q \to D_\Q/\m$ for $\m\in\Spec(D_\Q)$.
\item
For each $\m\in\Spec(D_\Q)$ compute $\m\cap D = \ker(D \to D_\Q/\m)$
by applying the kernel algorithm in \S 14 of
\cite{HWLMSRI}.
\item
For all $\m\neq \nn\in\Spec(D_\Q)$,
apply the image algorithm in \S 14 of
\cite{HWLMSRI} to compute a $\Z$-basis of 
$$
\image((\m \cap D) \oplus (\nn \cap D) \to D) = 
(\m \cap D) + (\nn \cap D)
$$
expressed in a $\Z$-basis of $D$, and compute
$n(D,\m,\nn)$ as the absolute value of the determinant of the
matrix whose columns are those basis vectors.
\item
Use the numbers $n(D,\m,\nn)$ to obtain the graph $\Gamma(D)$
and its connected components.
\end{enumerate}
\end{algorithm}

The algorithm runs in polynomial time by well-known graph algorithms
(see for example \cite{HopcroftTarjan}).

\begin{prop}
\label{bijgraphprop}
Suppose that $D$ is an order in a separable $\Q$-algebra. 
\begin{enumerate}
\item
Suppose $e=(e_\m)_{\m\in\Spec(D_\Q)} \in\id(\prod_\m D_\Q/\m) = \{0,1\}^{\Spec(D_\Q)}$.
Then the following are equivalent:
\begin{enumerate}
\item
$e\in D$,
\item
$e_\m = e_\nn$ whenever $\m$ and $\nn$ are connected
in $\Gamma(D)$,
\item
$e_\m = e_\nn$ whenever $\m$ and $\nn$ are in the same connected
component of $\Gamma(D)$.
\end{enumerate}
\item 
Let $\Omega$ denote the set of connected components of the graph $\Gamma(D)$
and recall $e_W$ from Definition \ref{eWdefn}.
Then $W \mapsto e_W$ gives a bijection
$$
\Omega \isom \prid(D) \subset D \subset \prod_{\m\in \Spec(D_\Q)}D_\Q/\m.
$$ 
\end{enumerate}
\end{prop}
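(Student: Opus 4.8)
The plan is to deduce Proposition \ref{bijgraphprop} from Lemma \ref{idempotClem} together with elementary facts about primitive idempotents and connected components of graphs. Throughout, I identify $D$ with its image in $B=\prod_{\m\in\Spec(D_\Q)}D_\Q/\m$; this identification is legitimate because the map $D\to B$ is injective (its kernel is $\bigcap_\m(\m\cap D)$, which is torsion-free since it sits inside $D$, and becomes zero after tensoring with $\Q$ by Proposition \ref{CRTforEred}, hence is $\{0\}$). Since each $D_\Q/\m$ is a field, the idempotents of $B$ are exactly the tuples in $\{0,1\}^{\Spec(D_\Q)}$, so every idempotent $e$ of $D$ corresponds to a subset $W_e=\{\m:e_\m=1\}$, and $e=e_{W_e}$.

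For part (i), I would first observe that for distinct $\m,\nn$, the ideals $\m\cap D$ and $\nn\cap D$ are not all of $D$ (they are contained in proper ideals), that $\bigcap_\m(\m\cap D)=\{0\}$ by the injectivity above, and that by definition $n(D,\m,\nn)>1$ is equivalent to $(\m\cap D)+(\nn\cap D)\neq D$. Then Lemma \ref{idempotClem}, applied with $\SS=\{\m\cap D:\m\in\Spec(D_\Q)\}$, says precisely that $e\in D$ if and only if $e_\m=e_\nn$ for every edge $\{\m,\nn\}$ of $\Gamma(D)$; this is the equivalence of (a) and (b). The equivalence of (b) and (c) is a purely combinatorial statement about graphs: a $\{0,1\}$-valued function on the vertex set is constant on each edge if and only if it is constant on each connected component, which follows by induction along paths. (One minor point to check is that there are no issues when $\SS$ has repeated members or when a vertex $\m$ has $\m\cap D$ equal to some $\nn\cap D$, but since $D\to B$ is injective and $\m\neq\nn$ give distinct fiber projections, the $\m\cap D$ are pairwise distinct; in any case Lemma \ref{idempotClem} is stated for a set of ideals and the correspondence $\m\mapsto\m\cap D$ is injective here.)

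For part (ii), let $\Omega$ be the set of connected components. From part (i), the map $W\mapsto e_W$ restricts to a bijection between subsets $W\subset\Spec(D_\Q)$ that are unions of connected components (equivalently, the $\{0,1\}$-tuples constant on components) and $\id(D)$; and such unions of components are in bijection with subsets of $\Omega$. So $\id(D)$ is in bijection with the power set of $\Omega$, via $\Psi\mapsto e_{\bigcup_{c\in\Psi}c}$ for $\Psi\subset\Omega$. It remains to identify which subsets $\Psi$ yield \emph{primitive} idempotents. Here I would use the general fact (valid in any commutative ring with finitely many idempotents, as recalled after Definition \ref{conndef}) that the primitive idempotents are the minimal nonzero idempotents under the partial order $e\le e'\iff ee'=e$, and that under the Boolean-algebra structure on $\id(D)$ the operation is $e_We_{W'}=e_{W\cap W'}$. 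Translating through the bijection $\Psi\mapsto e_{\bigcup\Psi}$: the product corresponds to intersection of the index sets $\Psi$, so the minimal nonzero elements are exactly the singletons $\Psi=\{c\}$ with $c\in\Omega$. Hence $W\mapsto e_W$ restricts to a bijection $\Omega\isom\prid(D)$, which is the claim.

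The main obstacle is not any single hard step but rather bookkeeping: one must be careful that $\{0,1\}^{\Spec(D_\Q)}$ really is all of $\id(B)$ (clear since the factors are fields), that the Boolean operations on $\id(D)$ are computed componentwise so that the product corresponds to $W\cap W'$, and — the one genuinely substantive input — that the idempotents lying in $D$ are exactly those constant on connected components, which is the content of Lemma \ref{idempotClem} combined with the graph-theoretic path argument. Once those are in place, the identification of primitive idempotents with singletons in $\Omega$ is formal.
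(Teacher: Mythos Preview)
Your argument is correct and follows essentially the same route as the paper: apply Lemma~\ref{idempotClem} with $R=D$ and $\SS=\{\m\cap D:\m\in\Spec(D_\Q)\}$ to get (a)$\iff$(b), observe the graph-theoretic equivalence (b)$\iff$(c), and then read off the bijection $\{\text{subsets of }\Omega\}\isom\id(D)$ under which singletons correspond to primitive idempotents. You are somewhat more explicit than the paper about why $D\hookrightarrow B$, why the $\m\cap D$ are pairwise distinct, and why primitives correspond to singletons via the Boolean-algebra structure, but these are elaborations of the same proof rather than a different one.
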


\begin{proof}
Apply Lemma \ref{idempotClem} with $R = D$
and $S=\{ \m\cap D : \m\in\Spec(D_\Q)\}$.
We have $\bigcap_{\a\in \SS} \a =\bigcap_{\m}(\m\cap D) = \{ 0\}$
since $D$ injects into $\prod_{\m}D_\Q/\m$.
Identifying $\id(\prod D_\Q/\m)$ with $\{ 0,1\}^\SS$,
Lemma \ref{idempotClem} implies that if
$e = (e_\m)_{\m\in \Spec(D_\Q)} \in \id(\prod D_\Q/\m)$,
then $e\in D$ if and only if
$e_\m = e_\nn$ for all $\m,\nn\in\Spec(D_\Q)$ 
that are connected in $\Gamma(D)$.
It follows that for each $e = (e_\m)_{\m} \in \id(D)$
the components $e_\m$ are constant ($0$ or $1$) on each connected
component of $\Gamma(D)$.
Part (i) now follows.
It also follows that there is a bijection
$$
\{ \text{subsets of $\Omega$}\} \to \id(D)
$$
defined by
$T \mapsto \sum_{W\in T}e_W$ with inverse 
$e=(e_\m)_\m \mapsto \{ W\in\Omega : e_\m=1 \text{ for all $\m\in W\}$}$.
Under this bijection, $\prid(D)$ corresponds to $\Omega$,
and this gives the bijection in (ii).
\end{proof}

\begin{rem}
In particular, by Proposition \ref{bijgraphprop}(ii) an order $D$ is connected if and only if $\Gamma(D)$ is connected.
\end{rem}

\section{Finding idempotents}
\label{primidsect}

The set of idempotents of an order may be too large to compute,
but the set of primitive idempotents is something that we are able
to efficiently compute.

\begin{algorithm}
\label{idempotentalg}
Given an order $A$, the algorithm outputs the set of primitive idempotents of $A$.
\end{algorithm}

\begin{enumerate}
\item
Use Algorithm \ref{Asepalgor}
to compute $A_\sep$. 
\item
Use Algorithm \ref{graphalgor}
to compute the graph $\Gamma(A_\sep)$ and its connected components.
\item
For each connected component $W$ of 
$\Gamma(A_\sep)$, with $e_W \in \{ 0,1\}^{\Spec(E)}
\subset \prod_{\m\in\Spec(E)}E/\m$ as in Notation \ref{eWdefn},
use the inverse of the square matrix with $\Q$-coefficients 
that gives the natural map
$E_\sep \isom \prod_{\m\in\Spec(E)}E/\m$ 
of Proposition \ref{CRTforEred}
to lift $e_W$ to $E_\sep$. Output these lifts.
\end{enumerate}

If follows from Proposition \ref{bijgraphprop}(ii)
that the lift $e_W$ to $E_\sep$ is in $A_\sep$,
and that Algorithm \ref{idempotentalg} gives the desired output $\prid(A)$.
It is clear that it runs in polynomial time. 

\section{Discrete logarithms}
\label{DLsect}

In this section, we suppose that $G$ is a multiplicatively written abelian group 
with elements represented by finite bitstrings.
All algorithms in the present section have $G$ as part of their
input. Thus, saying that they are polynomial-time means that their runtime 
is bounded by a polynomial function of the length of the parameters
specifying $G$ plus the length of the rest of the input.
We suppose that polynomial-time 
algorithms for the group operations and for equality testing
in $G$ are available. 

\begin{defn}
\label{effpresdefn}
We say  $\langle S|R\rangle$ is an {\bf efficient presentation}
for $G$ if $S$ is a finite set, and we have a map $f = f_S : S\to G$ satisfying:
\begin{enumerate}
\item[(a)]
$f(S)$ generates $G$, i.e., the map $g_S : \Z^S \to G$,
$(b_s)_{s\in S} \mapsto \prod_{s\in S}f(s)^{b_{s}}$
is surjective,
\item[(b)]
$R\subset\Z^S$ is a finite set of generators for $\ker(g_S)$,
\item[(c)]
we have a polynomial-time algorithm 
that on input $\gamma\in G$ finds an element of $g_S^{-1}(\gamma)$
(i.e., finds $(c_s)_{s\in S} \in\Z^S$ such that $\gamma = \prod_{s\in S}f(s)^{c_{s}}$).
\end{enumerate}
\end{defn}

\begin{notation}
Suppose $\langle S|R\rangle$ is an efficient presentation for $G$.
Define 
$$
\rho : \Z^R \to \Z^S, \quad
\rho((m_r)_{r\in R}) = \sum_{r\in R}m_r r.
$$
Suppose $T$ is a finite set and we have a map $f_T : T \to G$.
By abuse of notation we usually
suppress the maps $f_S$ and $f_T$ and write $s$ for $f_S(s)$
and $f_T(s)$ and write $\langle T \rangle$ for $\langle f_T(T) \rangle$.
Define
$$g_T : \Z^T \to \langle T \rangle, 
\quad (b_t)_{t\in T} \mapsto \prod_{t\in T}t^{b_t}.$$
Define
$h = h_{T}
: \Z^T \to \Z^S$ by using (c)
to write each $t\in T$ as $t=\prod_{s\in S}s^{c_{s,t}}$
and defining 
$$
h((b_t)_{t\in T}) = (\sum_{t\in T}b_tc_{s,t})_{s\in S}\in\Z^S
$$
so that $g_T=g_S\circ h$.
\end{notation}

For the remainder of this section
we suppose that an efficient presentation $\langle S|R\rangle$ for an 
abelian group $G$ is given.

\begin{algorithm}
\label{genlprinalgor}
The algorithm takes as input $G$, 
an efficient presentation $\langle S|R\rangle$ for $G$, and
a finite set $T$ with a map $T \to G$,
and outputs a finite set $U = U_T$ of generators for $\ker(g_T)$.
\end{algorithm}

\begin{enumerate}
\item
Define
$h-\rho : \Z^T \times \Z^R \to \Z^S$ by $(h-\rho)(x,y)= h(x)-\rho(y)$. 
Use the kernel algorithm in \S 14 of
\cite{HWLMSRI} to compute a finite set $V$ of generators for $\ker(h-\rho)$.
\item
Compute the image $U$ of $V$ under the projection map
$\Z^T \times \Z^R \onto \Z^T$, $(x,y)\mapsto x$.
\end{enumerate}

\begin{thm}
\label{genlprinthm}
Algorithm \ref{genlprinalgor}
produces correct output and runs in polynomial time.
\end{thm}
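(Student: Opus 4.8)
The plan is to verify that the set $U$ output by Algorithm~\ref{genlprinalgor} generates $\ker(g_T)$, and then to check that each step runs in polynomial time. For correctness, recall that $g_T = g_S \circ h$ and that $g_S$ has kernel generated by $R$, i.e.\ $\ker(g_S) = \rho(\Z^R)$ since $\rho((m_r)_r) = \sum_r m_r r$ and $R$ generates $\ker(g_S)$. Therefore $x \in \ker(g_T)$ if and only if $h(x) \in \ker(g_S) = \rho(\Z^R)$, which holds if and only if there exists $y \in \Z^R$ with $h(x) = \rho(y)$, i.e.\ $(h-\rho)(x,y) = 0$. This says precisely that $\ker(g_T)$ is the image of $\ker(h-\rho)$ under the first projection $\Z^T \times \Z^R \onto \Z^T$. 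Since $V$ generates $\ker(h-\rho)$ and the projection is a surjective group homomorphism, its image $U$ of $V$ generates the image of $\ker(h-\rho)$, which is $\ker(g_T)$. This establishes correctness.

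For the running time, the maps $h$ and $\rho$ are given by integer matrices: $\rho$ is described directly by the elements of $R$ as columns, and $h$ is computed by invoking part~(c) of the efficient presentation once for each $t \in T$ to obtain the coefficient vectors $(c_{s,t})_{s \in S}$; by hypothesis each such invocation runs in polynomial time, so the matrix for $h-\rho$ has polynomially bounded entries and can be written down in polynomial time. The kernel algorithm of \S14 of \cite{HWLMSRI} then produces a generating set $V$ for $\ker(h-\rho)$ in polynomial time, with polynomially bounded bit-length, and the projection to obtain $U$ is immediate. Hence the whole algorithm is polynomial-time.

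I expect the only real subtlety to be keeping the bookkeeping between the three lattices $\Z^S$, $\Z^R$, $\Z^T$ straight and being careful that ``generates'' is meant in the sense of subgroups of $\Z^S$ (so that $\ker(g_S) = \rho(\Z^R)$ is an honest equality of subgroups, not merely an inclusion). Everything else is a routine application of the kernel/image machinery already cited in the excerpt, so there is no genuine obstacle; the content of the theorem is really just the observation that the fibre-product description of $\ker(g_T)$ above is exactly what the kernel algorithm computes.
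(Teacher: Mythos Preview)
Your argument is correct and follows exactly the paper's approach: the chain of equivalences showing $x\in\ker(g_T)\iff h(x)\in\ker(g_S)=\im(\rho)\iff (x,y)\in\ker(h-\rho)$ for some $y$, hence $\ker(g_T)=\proj(\ker(h-\rho))=\langle U\rangle$, is precisely what the paper does. You in fact go a bit further than the paper by spelling out why the integer matrices for $h$ and $\rho$ have polynomially bounded entries, whereas the paper's proof only writes out the correctness chain and leaves the running-time bound implicit.
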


\begin{proof}
We have:
\begin{align*}
x\in\ker(g_T) & \iff h(x)\in\ker(g_S)=\im(\rho) \\
& \iff 
\exists y\in\Z^R \text{ such that }  h(x)=\rho(y)  \\
& \iff 
\exists y\in\Z^R \text{ such that }  (h-\rho)(x,y)=0  \\
& \iff
\exists y\in\Z^R \text{ such that }  (x,y) \in \langle V \rangle  \\
& \iff
x \in \proj(\langle V \rangle) = \langle \proj(V) \rangle = \langle U \rangle.
\end{align*}
\end{proof}

\begin{algorithm}
\label{genlprinalgor2}
The algorithm takes as input  $G$, 
an efficient presentation $\langle S|R\rangle$ for $G$,
a finite set $T$ with a map $T \to G$, and an element $\gamma\in G$,
and decides whether $\gamma\in \langle T \rangle$, and if it is,
produces an element of $g_T^{-1}(\gamma)$ 
(i.e., finds $(c_t)_{t\in T} \in\Z^T$ such that $\gamma = \prod_{t\in T}t^{c_{t}}$).
\end{algorithm}

\begin{enumerate}
\item
Apply 
Algorithm \ref{genlprinalgor} with $T \cup \{\gamma\}$  
in place of $T$ to find
a finite set of generators 
$U_{T \cup \{\gamma\}} \subset \Z^{T \cup \{\gamma\}}$
for 
$\ker(g_{T \cup \{\gamma\}})$, where
$$g_{T \cup \{\gamma\}} : 
\Z^{T \cup \{\gamma\}} = \Z^T\times\Z^{\{\gamma\}}\to G, \qquad
(x,n)\mapsto g_T(x)\gamma^n.$$
\item
Map the elements $u\in U_{T \cup \{\gamma\}} \subset \Z^{T \cup \{\gamma\}}
= \Z^{T} \times \Z^{\{\gamma\}}$ 
to their 
$\Z^{\{\gamma\}}$-components $u(\gamma)\in\Z$.
If $\sum_{u\in U_{T \cup \{\gamma\}}}u(\gamma)\Z \neq \Z$ then $\gamma\not\in \langle T \rangle$;
if $1=\sum_{u\in U_{T \cup \{\gamma\}}}n_u u(\gamma)$ with $(n_u)_{u\in U_{T \cup \{\gamma\}}}\in\Z^{U_{T \cup \{\gamma\}}}$
then $\gamma\in \langle T \rangle$ and
the $\Z^T$-component of 
$-\sum_{u\in U_{T \cup \{\gamma\}}}n_u u \in \Z^{T \cup \{\gamma\}}= \Z^{T} \times \Z^{\{\gamma\}}$ is in
$g_T^{-1}(\gamma)$.
\end{enumerate}

\begin{algorithm}
\label{genlprinalgor2a}
The algorithm takes as input  $G$, 
an efficient presentation $\langle S|R\rangle$ for $G$, and
a finite set $T$ with a map $T \to G$, and 
outputs an efficient presentation 
$\langle T|U_{T} \rangle$ for $\langle T \rangle$.
\end{algorithm}

\begin{enumerate}
\item
Apply Algorithm \ref{genlprinalgor} to obtain a set $U_T$ of relations.
\item
Output the presentation $\langle T|U_{T} \rangle$.
\end{enumerate}

\begin{thm}
\label{genlprinthm2}
Algorithms \ref{genlprinalgor2} and \ref{genlprinalgor2a}
produce correct output and run in polynomial time.
In particular, if one has an efficient presentation for $G$, and
$T$ is a finite set with a map $T \to G$, then
$\langle T|U_{T} \rangle$ is an efficient presentation
for $\langle T \rangle$.
\end{thm}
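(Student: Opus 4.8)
The plan is to reduce everything to Algorithm \ref{genlprinalgor} and Theorem \ref{genlprinthm}, which I treat as established, and then verify the three conditions of Definition \ref{effpresdefn} by hand. The conceptual core is a single observation about Algorithm \ref{genlprinalgor2}: when one applies Algorithm \ref{genlprinalgor} with $T\cup\{\gamma\}$ in place of $T$, the image of the resulting group $\ker(g_{T\cup\{\gamma\}})$ under the projection $\Z^{T\cup\{\gamma\}}=\Z^T\times\Z^{\{\gamma\}}\to\Z^{\{\gamma\}}$ is precisely the subgroup $N=\{n\in\Z:\gamma^n\in\langle T\rangle\}$ of $\Z$. Indeed, $(x,n)\in\ker(g_{T\cup\{\gamma\}})$ means $\gamma^n=g_T(-x)\in\langle T\rangle$, which forces $n\in N$; conversely, if $n\in N$, pick $x\in\Z^T$ with $g_T(x)=\gamma^n$, so that $(-x,n)$ lies in the kernel.

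Granting this, correctness of Algorithm \ref{genlprinalgor2} follows. Writing $N=d\Z$ with $d\in\Z_{\ge 0}$, one has $\gamma\in\langle T\rangle$ if and only if $d=1$, and since $U_{T\cup\{\gamma\}}$ generates $\ker(g_{T\cup\{\gamma\}})$ by Theorem \ref{genlprinthm}, the integers $u(\gamma)$ generate $N$, so $\sum_u u(\gamma)\Z=d\Z$ --- which is exactly the membership test in step (ii). When $\gamma\in\langle T\rangle$, a B\'ezout relation $1=\sum_u n_u u(\gamma)$ gives $\sum_u n_u u\in\ker(g_{T\cup\{\gamma\}})$ with $\Z^{\{\gamma\}}$-component $1$; writing this element as $(w,1)$ we get $g_T(w)\gamma=1$, hence $g_T(-w)=\gamma$, so $-w$, the $\Z^T$-component of $-\sum_u n_u u$, lies in $g_T^{-1}(\gamma)$, exactly as the algorithm returns.

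For the running time, the lone call to Algorithm \ref{genlprinalgor} is polynomial-time by Theorem \ref{genlprinthm} (this invokes property (c) of the presentation once more, to express $\gamma$ over $S$ when forming $h_{T\cup\{\gamma\}}$), and the remaining work --- an extended-gcd computation on the $u(\gamma)$ followed by $\Z$-linear combinations in $\Z^{T\cup\{\gamma\}}$ --- is polynomial-time because the vectors $u$ come out of the integer kernel algorithm of \S 14 of \cite{HWLMSRI} with polynomially bounded bit length. Finally, Algorithm \ref{genlprinalgor2a} merely runs Algorithm \ref{genlprinalgor} and prints $\langle T|U_T\rangle$, so it is polynomial-time; and $\langle T|U_T\rangle$ is an efficient presentation for $\langle T\rangle$ because condition (a) of Definition \ref{effpresdefn} holds by the very definition of $\langle T\rangle$ as $\langle f_T(T)\rangle$, condition (b) is the conclusion of Theorem \ref{genlprinthm} that $U_T$ generates $\ker(g_T)$, and condition (c) is supplied by the preimage-finding half of Algorithm \ref{genlprinalgor2} just verified.

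I do not anticipate a real obstacle. The only step needing genuine thought is the identification of the $\Z^{\{\gamma\}}$-projection of $\ker(g_{T\cup\{\gamma\}})$ with $\{n:\gamma^n\in\langle T\rangle\}$, together with keeping the signs straight when reading off a preimage; the polynomial-time assertion is then a routine appeal to the size guarantees of the cited integer kernel algorithm.
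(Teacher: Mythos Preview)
Your argument is correct and follows essentially the same route as the paper: identify the projection of $\ker(g_{T\cup\{\gamma\}})$ onto the $\gamma$-coordinate, test whether $1$ lies in it, and read off a preimage from a B\'ezout relation. You in fact supply more detail than the paper does, explicitly checking the running-time claim and the three conditions of Definition~\ref{effpresdefn} for $\langle T\mid U_T\rangle$, which the paper leaves implicit.
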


\begin{proof}
We have:
\begin{align*}
\gamma \in \langle T \rangle & \iff 
\exists x\in\Z^T \text{ such that } \gamma = g_T(x)  \\
& \iff \exists x\in\Z^T  \text{ such that }  (-x,1) \in\ker(g_{T \cup \{\gamma\}}:\Z^T\times\Z\to G) =  
\langle U_{T \cup \{\gamma\}} \rangle \\
& \iff 
1 \in\im(\proj : \langle U_{T \cup \{\gamma\}} \rangle \subset \Z^T\times\Z\to \Z) \\
& \iff 
\exists (n_u)_{u\in U_{T \cup \{\gamma\}}}, \exists x\in\Z^T  \text{ such that }  \sum_u n_u u = (-x,1) 
\end{align*}
where $\proj$ is projection onto the second component.
\end{proof}

\begin{algorithm}
\label{genlprinalgor4}
The algorithm takes as input  $G$, 
an efficient presentation $\langle S|R\rangle$ for $G$,
finite sets $T$ and $T'$, and maps $f_T: T \to G$ and $f_{T'}: T'\to G$, 
and outputs  a finite set of generators for the kernel of the composition
$\Z^T \to G \to G/\langle T' \rangle$, where $\Z^T \to G$ is the map $g_T$.
\end{algorithm}

\begin{enumerate}
\item
Apply Algorithm \ref{genlprinalgor} to the finite set
$T \sqcup T'$ and the map $T \sqcup T' \to G$ obtained from 
$f_T$ and $f_{T'}$, to obtain
generators for the kernel of the map
$$\Z^T \times \Z^{T'} = \Z^{T \sqcup T'} \to G, 
\qquad (x,y) \mapsto g_T(x) - g_{T'}(y).$$
\item
Project these generators 
to their $\Z^T$-component.
\end{enumerate}

\begin{thm}
\label{genlprinthm4}
Algorithm \ref{genlprinalgor4}
produces correct output and runs in polynomial time.
\end{thm}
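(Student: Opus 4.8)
The plan is to reduce the correctness of Algorithm \ref{genlprinalgor4} to the already-proven correctness of Algorithm \ref{genlprinalgor} (Theorem \ref{genlprinthm}), by observing that the kernel we want is a projection of a kernel of the type that Algorithm \ref{genlprinalgor} computes. Concretely, form the disjoint union $T \sqcup T'$ and the map $f: T \sqcup T' \to G$ built from $f_T$ and $f_{T'}$; then $\Z^{T\sqcup T'} = \Z^T \times \Z^{T'}$ and the associated homomorphism $g_{T\sqcup T'} : \Z^T\times\Z^{T'}\to G$ is $(x,y)\mapsto g_T(x)g_{T'}(y)$ (written additively in the display as $g_T(x)-g_{T'}(y)$, which is the same subgroup-theoretic content since replacing $f_{T'}$ by its inverse changes nothing about which subgroup is generated and which kernel arises). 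Step 1 of the algorithm invokes Algorithm \ref{genlprinalgor} on this data to get a finite generating set for $\ker(g_{T\sqcup T'})$; step 2 projects these generators onto the $\Z^T$ factor.

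The key step is then the identity
\[
\bigl\{\, x \in \Z^T : g_T(x) \in \langle T'\rangle \,\bigr\} \;=\; \proj_{\Z^T}\!\bigl(\ker(g_{T\sqcup T'})\bigr),
\]
where $\proj_{\Z^T} : \Z^T\times\Z^{T'}\to\Z^T$ is the first projection. For the inclusion $\supseteq$: if $(x,y)\in\ker(g_{T\sqcup T'})$ then $g_T(x) = g_{T'}(y)^{-1}\in\langle T'\rangle$. For $\subseteq$: if $g_T(x)\in\langle T'\rangle$, pick $y\in\Z^{T'}$ with $g_{T'}(y) = g_T(x)^{-1}$; then $(x,y)\in\ker(g_{T\sqcup T'})$ and projects to $x$. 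Finally, the kernel of the composition $\Z^T \xrightarrow{g_T} G \to G/\langle T'\rangle$ is exactly the set on the left-hand side, and the projection of a generating set of a subgroup $L\subset \Z^T\times\Z^{T'}$ generates $\proj_{\Z^T}(L)$; applying this to $L = \ker(g_{T\sqcup T'})$ with the generating set produced in step 1 shows that the output of step 2 generates the desired kernel.

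For the running time: Algorithm \ref{genlprinalgor} runs in polynomial time by Theorem \ref{genlprinthm}, the input $T\sqcup T'$ and its map to $G$ are constructed in polynomial time from the given data, and the projection in step 2 is trivial to compute; hence the whole algorithm is polynomial-time. I do not anticipate a genuine obstacle here: the only point requiring a little care is the bookkeeping in the sign/inverse convention (the display writes $g_T(x) - g_{T'}(y)$ while $G$ is multiplicative), but since $\langle f_{T'}(T')\rangle = \langle f_{T'}(T')^{-1}\rangle$ this does not affect the subgroup $\langle T'\rangle$ or the kernel in question, so the reduction to Theorem \ref{genlprinthm} goes through verbatim. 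Thus the proof is essentially a one-line verification of the projection identity followed by a citation of Theorem \ref{genlprinthm}.
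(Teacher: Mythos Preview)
Your proof is correct and follows essentially the same approach as the paper: both arguments verify the identity $\ker(\Z^T\to G/\langle T'\rangle)=\proj_{\Z^T}\bigl(\ker(\Z^T\times\Z^{T'}\to G)\bigr)$ via the same chain of equivalences, and then invoke Theorem~\ref{genlprinthm}. Your added remarks on the sign convention and on why projecting a generating set yields a generating set are fine supplementary details but do not alter the strategy.
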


\begin{proof}
We have:
\begin{align*}
x \in \ker(\Z^T \to G/\langle T' \rangle)
& \iff 
g_T(x) \in \langle T' \rangle = \im(g_{T'}) \\
& \iff 
\exists y\in\Z^{T'}  \text{ such that }   g_T(x) = g_{T'}(y)  \\
& \iff 
\exists y\in\Z^{T'}  \text{ such that }   (x,y) \in\ker(\Z^T\times\Z^{T'} \to G)  \\
& \iff 
x \in \proj(\ker(\Z^T\times\Z^{T'} \to G) \to \Z^T)
\end{align*}
where 
$\proj$ denotes projection onto the $\Z^T$-component.
\end{proof}

\noindent{\bf{Proof of Theorem \ref{muAthm2}.}}
One starts by computing $E=A\otimes_\Z\Q$, using the same
structure constants as for $A$.
Algorithm \ref{muEthmalgor} produces a presentation for $\mu(E)$,
and by Algorithm \ref{muEthmalgor2} this is an efficient presentation.
Given $T$ and $\zeta$ as in Theorem \ref{muAthm2},
one can test whether $\zeta\in E$ by Algorithm \ref{muEthmalgor2}.
Now Theorem \ref{muAthm2} is obtained from Algorithm \ref{genlprinalgor2},
with $G=\mu(E)$ and $\gamma=\zeta$.

\section{Separable polynomials over connected rings}
\label{sepconnsect}

Proposition \ref{degbd}(b) will be used to prove Proposition \ref{Wconnprop} below.

\begin{prop}
\label{degbd}
Suppose $R$ is a connected commutative ring, $f \in R[X]$, and
$R[X] f + R[X] f' = R[X]$. Then:
\begin{enumerate}
\item[\rm (a)]
if $r,s\in R$ and $f(r) = f(s) = 0$, then
$r-s \in \{ 0 \} \cup R^\ast$;
\item[\rm (b)]
if $S$ is a non-zero ring and $\varphi : R \to S$ is a ring homomorphism,
then the restriction of $\varphi$ to $\{ r\in R : f(r) = 0 \}$ is injective;
\item[\rm (c)]
$f\neq 0$ and $\#\{ r\in R : f(r) = 0 \} \le \deg(f)$.
\end{enumerate}
\end{prop}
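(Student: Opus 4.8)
The plan is to prove the three parts in order, since (b) and (c) will follow from (a) with relatively little extra work.

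For part (a), the key observation is that $R[X]f + R[X]f' = R[X]$ means $f$ and $f'$ generate the unit ideal, so there exist $u, v \in R[X]$ with $uf + vf' = 1$. The idea is to use the Taylor expansion of $f$ around $r$: writing $f(X) = \sum_i c_i (X-r)^i$ with $c_0 = f(r) = 0$ and $c_1 = f'(r)$, and noting $f'(X) = \sum_{i \ge 1} i c_i (X-r)^{i-1}$ so $f'(r) = c_1$. Now suppose $f(r) = f(s) = 0$. Plugging $X = s$ into $f$, and using $f(r) = 0$, one gets $f(s) = (s-r)\bigl(c_1 + c_2(s-r) + \cdots\bigr) = 0$. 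Separately, plugging $X = r$ into the B\'ezout identity $uf + vf' = 1$ gives $v(r)f'(r) = 1$ (since $f(r) = 0$), so $c_1 = f'(r)$ is a unit in $R$. Then from $(s-r)\bigl(c_1 + (s-r)(\cdots)\bigr) = 0$, I would multiply through by $c_1^{-1}$ to write $(s-r)\bigl(1 + (s-r)w\bigr) = 0$ for some $w \in R$; call $t = s - r$, so $t + t^2 w = 0$, i.e., $t = -t^2 w$. Iterating, $t = t^2(-w) = t\cdot t(-w) = t(-t^2w)(-w)\cdots$, showing $t \in \bigcap_n (t^n)$; more cleanly, $t(1 + tw) = 0$ and I want to conclude $t = 0$ or $t \in R^\ast$. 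Here connectedness enters: the element $e = -tw$ satisfies... hmm, let me instead note $t = -t^2 w$ gives, setting $e = (-tw)$, that $t = te$ and $e = -tw$, so $e^2 = t^2 w^2 = (-t w)(tw)\cdot(-1)$... I would look for an idempotent. From $t = -t^2w$: let $g = -tw$; then $tg = -t^2 w = t$, and $g^2 = t^2w^2$, while $tw \cdot g = -t^2w^2 = -g^2$... Actually the clean route: $t(1+tw) = 0$ and $t\cdot t w = -t$ shows $1 + tw$ and $-tw$... Consider that $(1+tw) + (-tw) = 1$ and their product is $-tw(1+tw) = -tw - t^2w^2$; using $t^2 w = -t$ this is $-tw + tw = 0$. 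So $1+tw$ and $-tw$ are complementary idempotents (product $0$, sum $1$)! Hence each is idempotent; by connectedness each lies in $\{0,1\}$. If $-tw = 0$ then $t = t(1+tw) + t(-tw)\cdot$... wait: $t = t\cdot 1 = t(1+tw) + t(-tw) = 0 + (-t^2w) = t$, consistent; better: if $1 + tw = 0$ then $t = -t^2 w = t(tw)\cdot(-1)$... Let me just say: if $-tw = 1$ then $1 + tw = 0$, so $tw = -1$, so $t$ is a unit (with inverse $-w$); if $-tw = 0$ then $tw = 0$, so $t = t - (-t^2 w)\cdot$, i.e. from $t = -t^2 w = -(tw)t = 0$. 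Either way $t \in \{0\} \cup R^\ast$, proving (a).

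For part (b): if $\varphi : R \to S$ is a ring homomorphism to a nonzero ring and $f(r) = f(s) = 0$ with $\varphi(r) = \varphi(s)$, then $\varphi(r - s) = 0$. By (a), $r - s \in \{0\} \cup R^\ast$. If $r - s$ were a unit, then $\varphi(r-s)$ would be a unit in $S$, hence nonzero (as $S \ne 0$), contradiction. So $r - s = 0$, i.e. $r = s$; thus $\varphi$ is injective on the zero set of $f$.

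For part (c): First, $f \ne 0$ because if $f = 0$ then $f' = 0$, so $R[X]f + R[X]f' = 0 \ne R[X]$ (as $R \ne 0$, since a connected ring is nonzero by definition), contradicting the hypothesis. Next, to bound the number of roots by $\deg(f)$: I would argue by passing to a quotient that is a domain if possible, but connectedness doesn't give a domain directly, so instead I proceed more carefully. Let $d = \deg(f) \ge 0$ and let $r_0, \ldots, r_k$ be distinct roots of $f$ in $R$; I want $k \le d$, equivalently $k + 1 \le d + 1$. The approach: successively factor out linear factors. Since $f(r_0) = 0$, write $f(X) = (X - r_0) f_1(X)$ with $\deg(f_1) = d - 1$ (this works since $X - r_0$ is monic). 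One checks $f_1$ is again separable over $R$: from $f' = f_1 + (X-r_0)f_1'$ and the B\'ezout relation for $f, f'$, and using that $r_0$ may be translated to $0$, one derives a B\'ezout relation for $f_1, f_1'$ — here I'd use that $f_1(r_0) = f'(r_0)$ is a unit (shown in part (a)), which combined with $(X - r_0) \mid f$ lets me localize the argument. Then for each other root $r_i$ ($i \ge 1$), $0 = f(r_i) = (r_i - r_0) f_1(r_i)$, and $r_i - r_0 \in R^\ast$ by (a), so $f_1(r_i) = 0$. Thus $f_1$ has the $k$ distinct roots $r_1, \ldots, r_k$, and by induction on degree $k \le d - 1 < d$. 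The base case is $d = 0$: then $f$ is a nonzero constant which, being separable with $f' = 0$, must generate the unit ideal, so $f \in R^\ast$, hence $f$ has no roots, and $0 \le 0$.

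The main obstacle is verifying that the quotient $f_1 = f/(X - r_0)$ inherits separability over $R$; the slick way is to observe that, after translating $r_0$ to $0$ (replacing $f(X)$ by $f(X + r_0)$, which preserves separability), we have $f(0) = 0$, so $f = Xf_1$ with $f'(0) = f_1(0)$; from a B\'ezout identity $uf + vf' = 1$ evaluated and manipulated modulo $X$ one sees $f_1(0) \in R^\ast$, and then $f' = f_1 + Xf_1'$ together with $f = Xf_1$ gives $uf_1 \cdot X + v(f_1 + Xf_1') = 1$, i.e. $(uX + vf_1')f_1 + \cdots$; rearranging, $vf_1 + X(uf_1 + vf_1') = 1$, which exhibits $1 \in R[X]f_1 + R[X]f_1' + \cdots$ — a little care is needed to land exactly in $R[X]f_1 + R[X]f_1'$, but since $X$ divides $f = Xf_1$ and the $X$-term involves $uf_1 + vf_1'$, and $f_1(0)$ is a unit, one can clean this up. This bookkeeping, rather than any deep idea, is where the work lies.
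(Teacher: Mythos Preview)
Your arguments for (a) and (b) are correct and match the paper's proof essentially line for line: factor $f$ through $(X-r)$, observe $f'(r)$ is a unit from the B\'ezout identity, and then produce an idempotent from the resulting relation; connectedness forces it into $\{0,1\}$.

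For (c) your approach diverges from the paper's. The paper does \emph{not} induct on degree or worry about whether $f_1 = f/(X-r_0)$ remains separable. Instead it applies part (b) directly: pick a maximal ideal $\mathfrak{m}$ of $R$ and consider the ring homomorphism $R \to R/\mathfrak{m}$. By (b) this is injective on $\{r \in R : f(r) = 0\}$, and since $R/\mathfrak{m}$ is a field and $f \bmod \mathfrak{m}$ is nonzero (it still generates the unit ideal together with its derivative), the image has at most $\deg(f \bmod \mathfrak{m}) \le \deg(f)$ elements. This is a two-line argument and avoids all the bookkeeping you flagged.

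That said, your inductive route also works, and your worry about separability of $f_1$ is unfounded: after translating $r_0$ to $0$ so that $f = Xf_1$, the identity $uf + vf' = 1$ becomes $uXf_1 + v(f_1 + Xf_1') = 1$, which regroups immediately as $(uX+v)f_1 + (vX)f_1' = 1$. So $f_1$ is separable with no further cleanup needed. Your approach thus trades a short appeal to (b) for an explicit factorisation argument; the paper's is shorter, but yours is self-contained and gives the slightly finer information that $f$ actually splits off its roots as linear factors.
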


\begin{proof}
Suppose $f(r) = f(s) = 0$. Write $f=(X-r)g$ 
and $1 = hf+kf'$ 
with $g,h,k\in R[X]$.
Then $g(r) = f'(r)\in R^\ast$. 
Since $g(s)\equiv g(r)$ mod $(r-s)R$ we can write
$g(s) = g(r) + (r-s)t$ with $t\in R$.
Thus, $0 = f(s) = (s-r)g(s)= (s-r)(g(r) + (r-s)t)$,
so 
\begin{equation}
\label{sreqn}
(s-r)g(r)= t(s-r)^2.
\end{equation}
Thus, $t\cdot (s-r) \cdot g(r)^{-1} = (t\cdot (s-r)\cdot g(r)^{-1})^2$, an idempotent.
If $t\cdot (s-r) \cdot g(r)^{-1} = 0$, then by \eqref{sreqn} we have 
$(s-r)g(r)= 0$, and thus  $r-s=0$ since $g(r) \in R^\ast$.
If $t\cdot (s-r) \cdot g(r)^{-1} = 1$, then $r-s \in R^\ast$.
This gives (a).

For (b), suppose $r,s\in R$, $r \neq s$, and $f(r) = f(s) = 0$.
By (a) we have 
$r-s \in R^\ast$. Since $\varphi(1) = 1 \neq 0$, we have $\varphi(r-s) \neq 0$.

For (c), let $\m$ be a maximal ideal of $R$.
Then $R \to R/\m$ induces a map
$$
\{ r\in R : f(r)=0\} \to \{ u\in R/\m : (f \text{ mod $\m)(u)=0\}$}
$$
that is injective by (b). 
Since $R/\m$ is a field and $f \text{ mod $\m \in (R/\m)[X]$}$ is non-zero, 
we have 
$$
\#\{ r\in R : f(r) = 0 \} \le \deg(f \text{ mod $\m)  \le \deg(f)$}.
$$
\end{proof}

\begin{cor}
\label{cycliclemma}
Suppose $R$ is a connected commutative ring, 
$m\in \Z_{>0}$, and
$m\cdot 1\in R^\ast$.
Then $\{ \zeta\in R : \zeta^m=1\}$ is a cyclic subgroup of $R^\ast$ whose
order divides $m$.
\end{cor}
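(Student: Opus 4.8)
The plan is to reduce the corollary to Proposition \ref{degbd}(c), applied twice: once to $X^m-1$ and once to $X^n-1$, where $n$ is the exponent of the group of $m$-th roots of unity, combined with the elementary structure fact that a finite abelian group whose order equals its exponent is cyclic.

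First I would check that $X^m-1$ is separable over $R$. Its derivative is $mX^{m-1}$, and since $m\cdot 1\in R^\ast$ we have $1 = (m\cdot 1)^{-1}\bigl(X\cdot(X^m-1)' - m\cdot(X^m-1)\bigr)$, so $R[X](X^m-1)+R[X](X^m-1)' = R[X]$; this is also recorded in the Examples above. Set $H = \{\zeta\in R : \zeta^m = 1\}$. Each $\zeta\in H$ lies in $R^\ast$ with inverse $\zeta^{m-1}$, and $H$ is closed under multiplication and inversion and contains $1$, so $H$ is a subgroup of $R^\ast$; moreover Proposition \ref{degbd}(c) gives $\#H\le\deg(X^m-1) = m$, so $H$ is a finite abelian group.

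Next let $n = \exp(H)$. Since the order of every element of $H$ divides $m$, the exponent $n$ divides $m$; writing $m = nk$ we get $m\cdot 1 = (n\cdot 1)(k\cdot 1)$, whence $n\cdot 1\in R^\ast$, and the same computation as before shows $X^n-1$ is separable over $R$. Because every element of $H$ has order dividing $n$, we have $H\subseteq\{\zeta\in R : \zeta^n = 1\}$, so Proposition \ref{degbd}(c) applied to $X^n-1$ gives $\#H\le n$. Conversely a finite abelian group contains an element whose order equals its exponent, so $\#H\ge n$; hence $\#H = n$, so $H$ is cyclic, generated by any element of order $n$, and $\#H = n$ divides $m$.

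I do not expect a genuine obstacle here, since the substantive content is already contained in Proposition \ref{degbd}; the two points that need a moment's attention are the deduction that $n\cdot 1\in R^\ast$ (so that Proposition \ref{degbd} can be reused at the exponent rather than only at $m$) and the appeal to the fact that a finite abelian group of order equal to its exponent is cyclic.
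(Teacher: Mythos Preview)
Your proof is correct and follows essentially the same strategy as the paper: both reduce to Proposition \ref{degbd}(c) applied to polynomials of the form $X^d-1$, together with a standard structure fact about finite abelian groups. The paper applies Proposition \ref{degbd}(c) to $X^d-1$ for \emph{every} divisor $d$ of $m$ (so that the group has at most $d$ elements of order dividing $d$, hence is cyclic), whereas you apply it only at $d=m$ and at $d=\exp(H)$; this is a minor streamlining but not a genuinely different route.
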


\begin{proof}
Applying Proposition \ref{degbd} with $f = X^m-1$ 
gives that the subgroup has
order dividing $m$. 
Applying Proposition \ref{degbd} with $f = X^d-1$ for each
divisor $d$ of $m$ gives that this abelian subgroup has at most $d$ elements
of order dividing $d$, and thus is cyclic. 
\end{proof}

\section{From $\mu(E)$ to $\mu(B)$}
\label{EtoBsect}

Fix an order $A$. Recall that 
$E = A_\Q = A \otimes_\Z\Q$ 
and  
$A_\sep = A \cap E_\sep$.
For $\m\in\Spec(E)$, the image of $A_\sep$ in $E/\m$ may be identified
with $A_\sep/(\m\cap A_\sep)$; it is a ring of which the
additive group is a finitely generated subgroup of the 
$\Q$-vector space $E/\m$, so it is an order.
We now write
\begin{equation}
\label{Bdefn} 
B = \prod_{\m\in\Spec(E)} A_\sep/(\m\cap A_\sep).
\end{equation}
This is an order in $\prod_{\m\in\Spec(E)} E/\m$.
We identify $A_\sep$ with its image in $B$ under the map
$$
E_\sep \isom \prod_{\m\in\Spec(E)} E/\m
$$
and identify $B$ with a subring of 
$E_\sep$ using the same map.
One has
$$
A_\sep  \subset B \subset E_\sep.
$$
Since the abelian group $B/A_\sep$ is both torsion and 
finitely generated, it is finite, and one has $B_\Q = E_\sep$.
The graph $\Gamma(B)$ consists of the vertices $\m\in\Spec(E)$
and no edges.

\begin{prop}
\label{Balgorrrr}
There is a deterministic polynomial-time algorithm that, given an
order $A$, computes a $\Z$-basis for $A_\sep/(\m\cap A_\sep)$
in $E/\m$ for every $\m\in\Spec(E)$, a $\Z$-basis for 
$B$ in $E_\sep$, and the index $(B:A_\sep)$.
\end{prop}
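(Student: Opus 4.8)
The plan is to assemble the required data from algorithms already available, so that the proof reduces to checking that the identifications fixed before the statement are exactly the ones computed by the subroutines, plus a routine determinant computation for the index.

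First I would run Algorithm \ref{Asepalgor} on $A$, which produces $E$, a $\Q$-basis of $E_\sep\subset E$, and the order $A_\sep$ with a $\Z$-basis given in coordinates with respect to both the $\Z$-basis of $A$ and the $\Q$-basis of $E_\sep$; it also invokes Algorithm \ref{findmalgor}, so $\Spec(E)$, the fields $E/\m$ with chosen $\Q$-bases, the natural maps $E\to E/\m$, and hence the matrix $M$ of the isomorphism $E_\sep\isom\prod_{\m\in\Spec(E)}E/\m$ of Proposition \ref{CRTforEred} are all at hand. For each $\m$, applying $E\to E/\m$ to the $\Z$-basis of $A_\sep$ (read off in $E$-coordinates) gives a finite generating set for the finitely generated, torsion-free abelian group $A_\sep/(\m\cap A_\sep)\subset E/\m$; feeding this to the image (Hermite normal form) algorithm of \S 14 of \cite{HWLMSRI} yields a $\Z$-basis $\beta_\m$ of $A_\sep/(\m\cap A_\sep)$, expressed in the $\Q$-basis of $E/\m$. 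This is the first item.

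Since, as an abelian group, $B$ is the direct sum of the $A_\sep/(\m\cap A_\sep)$, a $\Z$-basis of $B\subset\prod_\m E/\m$ is obtained by listing, for each $\m$ and each vector in $\beta_\m$, the tuple supported in the $\m$-component; applying $M^{-1}$ to these tuples re-expresses this $\Z$-basis of $B$ inside $E_\sep$ in its chosen $\Q$-basis, which is the second item. For the third, I would write the given $\Z$-basis of $A_\sep$ and the computed $\Z$-basis of $B$, both in the $\Q$-basis of $E_\sep$, and use $A_\sep\subseteq B$ (immediate from the definitions) to express each basis vector of $A_\sep$ as an integer combination of the basis vectors of $B$; the resulting integer matrix $N$ is square of full rank because $(A_\sep)_\Q=E_\sep=B_\Q$, so $(B:A_\sep)=|\det N|$.

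All the subroutines — Algorithms \ref{Asepalgor} and \ref{findmalgor}, the kernel/image routines of \cite{HWLMSRI}, and matrix inversion and determinants over $\Q$ — run in polynomial time, so the composite algorithm does too. I do not expect a genuine obstacle: the only point that needs a word of care is that the bit-lengths of the intermediate matrices remain polynomially bounded, which is precisely what the cited lattice algorithms guarantee, and the rest is the bookkeeping of coordinate systems together with the standard fact that the index of one full-rank lattice in a containing one is the absolute value of the base-change determinant.
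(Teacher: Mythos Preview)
Your proposal is correct and follows essentially the same approach as the paper: compute $A_\sep$ via Algorithm \ref{Asepalgor}, take the image of its $\Z$-basis under each $E_\sep\to E/\m$ with the image algorithm of \cite{HWLMSRI}, reassemble the resulting bases through the inverse of the isomorphism of Proposition \ref{CRTforEred} to get a $\Z$-basis of $B$ in $E_\sep$, and read off $(B:A_\sep)$ as the absolute value of the base-change determinant. Your write-up is slightly more explicit about the bookkeeping (the matrix $M$, the Hermite normal form, bit-length considerations), but the method is the same.
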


\begin{proof}
One simply computes a $\Z$-basis for $A_\sep$ as in 
Algorithm \ref{Asepalgor}, and a $\Z$-basis for 
the image of the map $A_\sep \subset E_\sep \to E/\m$
using the image algorithm in \S 14 of \cite{HWLMSRI},
for each $\m\in\Spec(E)$.
Combining these bases for all $\m$ and applying the
inverse of the second isomorphism in Proposition \ref{CRTforEred}
one finds a $\Z$-basis for $B$ in $E_\sep$.
The index $(B:A_\sep)$ is the absolute value of the determinant
of any matrix expressing a $\Z$-basis for $A_\sep$ in a $\Z$-basis for $B$.
\end{proof}

\begin{prop}
\label{Balgorrrr2}
For each order $A$ and each $\m\in\Spec(E)$ the group
$\mu(A_\sep/(\m\cap A_\sep))$ is finite cyclic.
Also, there is a deterministic polynomial-time algorithm that, given 
$A$ and $\m$, computes a 
generator $\theta_{\m}$ of $\mu(A_\sep/(\m\cap A_\sep))$,
its order, the complete prime factorization of its order,
and, for each prime number $p$ 
a generator $\theta_{\m,p}$ for $\mu(A_\sep/(\m\cap A_\sep))_p$.
\end{prop}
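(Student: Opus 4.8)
The plan is to push everything down to the number field $E/\m$. Write $D = A_\sep/(\m\cap A_\sep)$ for the order in question; as explained in the paragraph preceding \eqref{Bdefn}, $D$ is indeed an order, and since $A_\sep$ spans $E_\sep$ over $\Q$ its image $D$ spans $E/\m$ over $\Q$, so $D_\Q = E/\m$, a number field. A root of unity of $D$ is exactly an element of $D$ that lies in $\mu(D_\Q)$, so $\mu(D) = D \cap \mu(E/\m)$; by Lemma \ref{muApremark}(iii) the group $\mu(E/\m)$ is finite cyclic, hence so is its subgroup $\mu(D)$. This gives the first assertion.

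For the algorithm, I would first invoke Proposition \ref{Balgorrrr} to obtain a $\Z$-basis of $D$ inside $E/\m$; since $D_\Q = E/\m$, this is also a $\Q$-basis of $E/\m$, so membership ``$x \in D$?'' for $x \in E/\m$ becomes a routine linear-algebra test (express $x$ in that basis and check that the coordinates are integers). Next I would apply Algorithm \ref{muEthmalgor} to the $\Q$-algebra $E/\m$: being a field, it has the single prime $(0)$, and the algorithm returns a generator $\zeta$ of $\mu(E/\m)$ together with $k := \ord(\zeta) = \#\mu(E/\m)$. As in the proof of Proposition \ref{muEthmalgorworks}, from $\Q(\zeta) \subset E/\m$ one gets $\varphi(k) \le [E/\m:\Q] \le \dim_\Q(E) = \rk(A)$, whence $k \le 2\varphi(k)^2 \le 2\,\rk(A)^2$; in particular $k$ is polynomially bounded and can be factored completely by trial division, say $k = \prod_i p_i^{a_i}$. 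Now I would simply run through the elements $1, \zeta, \zeta^2, \ldots, \zeta^{k-1}$ of $\mu(E/\m)$, each gotten from the previous by one multiplication in $E/\m$, testing each for membership in $D$; let $G$ be the subset of those lying in $D$, so $G = \mu(D)$. Finally, output as $\theta_\m$ any element of $G$ of maximal order, set $d = \ord(\theta_\m)$, factor $d$ (a divisor of $k$), and for every prime number $p$ output $\theta_{\m,p} = \theta_\m^{\,d/p^{v_p(d)}}$ (which is $1$ for all but finitely many $p$), where $v_p$ denotes the $p$-adic valuation.

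For correctness, Algorithm \ref{muEthmalgor} is correct by Proposition \ref{muEthmalgorworks}, so $\langle\zeta\rangle = \mu(E/\m)$ and $G = D \cap \mu(E/\m) = \mu(D)$; since a finite cyclic group is generated by any element of largest order, $\langle\theta_\m\rangle = \mu(D)$, $\ord(\theta_\m) = \#\mu(D)$ (whose factorization we have from that of $d$), and $\theta_{\m,p}$ generates the unique subgroup of $\langle\theta_\m\rangle$ of order $p^{v_p(d)}$, which is the $p$-part $\mu(D)_p$. Every step runs in polynomial time: the reduction to $E/\m$ and the $\#G \le k$ membership tests are polynomial-time linear algebra, and the factorizations of $k$ and $d$ and the enumeration of $\mu(E/\m)$ are cheap precisely because $k \le 2\,\rk(A)^2$. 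That last bound --- the fact that $\mu(E/\m)$ is small because $E/\m$ is a single number field of bounded degree, in sharp contrast to $\mu(A)$ and $\mu(B)$, which can have exponential order --- is the one nontrivial input and is really the crux of the matter.
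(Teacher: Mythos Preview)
Your proof is correct and follows essentially the same line as the paper's: reduce to the cyclic group $\mu(E/\m)$ via Lemma~\ref{muApremark}(iii), find a generator $\zeta$ of $\mu(E/\m)$ using Algorithm~\ref{muEthmalgor}, and then test powers of $\zeta$ for membership in $D=A_\sep/(\m\cap A_\sep)$ by integrality of coordinates on a $\Z$-basis from Proposition~\ref{Balgorrrr}. The only cosmetic difference is that the paper takes the \emph{first} power $\zeta^j\in D$ (which already generates $\mu(D)$, since a subgroup of a cyclic group is determined by the least positive exponent landing in it), whereas you enumerate all of $\mu(E/\m)$ and select an element of maximal order; both are polynomial-time for the same reason, namely the degree bound forcing $\#\mu(E/\m)$ to be small.
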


\begin{proof}
The first statement follows from Lemma \ref{muApremark}(iii).
For $\theta_{\m}$ one can take the first power of the generator $\zeta_\m$ of
$\mu(E/\m)$ found in Algorithm \ref{muEthmalgor} that belongs
to $A_\sep/(\m\cap A_\sep)$, i.e., for which all coordinates on a $\Z$-basis
of $A_\sep/(\m\cap A_\sep)$ (which is a $\Q$-basis of $E/\m$)
are integers. The order of $\theta_{\m}$ is then easy to write down, and since
the prime numbers dividing that order are, by Lemma \ref{muApremark}(iv),
bounded by $1+\rk_\Z(A)$, it is also easy to factor into primes.
If $p^k$ is a prime power exactly dividing $\order(\theta_{\m})$,
one can take $\theta_{\m,p}=\theta_{\m}^{\order(\theta_{\m})/p^k}$.
\end{proof}

\begin{prop}
\label{Balgorrrr3}
There is a deterministic polynomial-time algorithm that, given an
order $A$, determines all prime factors $p$ of $\#\mu(B)$,
with $B$ as in \eqref{Bdefn}, as well as
an efficient presentation for $\mu(B)$ 
and, for each $p$, an efficient presentation for $\mu(B)_p$.
\end{prop}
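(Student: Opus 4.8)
The plan is to use the fact that $B=\prod_{\m\in\Spec(E)}A_\sep/(\m\cap A_\sep)$ is a finite product of rings, so that $B^\ast=\prod_\m(A_\sep/(\m\cap A_\sep))^\ast$ and hence
$$
\mu(B)=\prod_{\m\in\Spec(E)}\mu\bigl(A_\sep/(\m\cap A_\sep)\bigr),
$$
a direct product of finite cyclic groups by Proposition \ref{Balgorrrr2}; and since an element of $B^\ast$ has $p$-power order exactly when each of its components does, also $\mu(B)_p=\prod_\m\mu(A_\sep/(\m\cap A_\sep))_p$. So the task is to package the per-component data of Proposition \ref{Balgorrrr2} into efficient presentations in the sense of Definition \ref{effpresdefn}.

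First I would run Algorithm \ref{Asepalgor} and Proposition \ref{Balgorrrr} to obtain $E$, $E_\sep$, $\Spec(E)$, the maps $E_\sep\to E/\m$, a $\Z$-basis of each $A_\sep/(\m\cap A_\sep)$ inside $E/\m$, and a $\Z$-basis of $B$ inside $E_\sep$. Then, for each $\m\in\Spec(E)$, Proposition \ref{Balgorrrr2} supplies a generator $\theta_\m$ of $\mu(A_\sep/(\m\cap A_\sep))$, the integer $\order(\theta_\m)$, its complete prime factorization, and for each prime $p$ a generator $\theta_{\m,p}$ of $\mu(A_\sep/(\m\cap A_\sep))_p$, whose order is the $p$-part of $\order(\theta_\m)$. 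Because $\varphi(\order(\theta_\m))\le[E/\m:\Q]\le\rk_\Z(A)$ we have $\order(\theta_\m)\le 2\,\rk_\Z(A)^2$; moreover $\#\mu(B)=\prod_\m\order(\theta_\m)$, so the set of prime divisors of $\#\mu(B)$ is the union over $\m$ of the prime divisors of $\order(\theta_\m)$, all of which are at most $1+\rk_\Z(A)$. This already yields the list of primes $p$ required by the statement.

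For the efficient presentation of $\mu(B)$ I would take $S=\Spec(E)$ and let $f_S$ send $\m$ to the element $\eta_\m\in B$ with $\m$-component $\theta_\m$ and all other components $1$; this $\eta_\m$ is computed by applying the inverse of the second isomorphism of Proposition \ref{CRTforEred}, exactly as in Algorithm \ref{muEthmalgor}, and it lies in $B$ precisely because we use $\theta_\m\in A_\sep/(\m\cap A_\sep)$ in place of the generator $\zeta_\m$ of $\mu(E/\m)$. I would take
$$
R=\{\,(0,\ldots,0,\order(\theta_\m),0,\ldots,0)\in\Z^{\Spec(E)}:\m\in\Spec(E)\,\}
$$
(with $\order(\theta_\m)$ in the $\m$-th position). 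Conditions (a) and (b) of Definition \ref{effpresdefn} hold since $\mu(B)$ is the internal direct product of the cyclic groups $\langle\eta_\m\rangle$ of orders $\order(\theta_\m)$. For condition (c), given $\gamma\in\mu(B)$ one computes its images $\gamma_\m\in E/\m$, which lie in $\langle\theta_\m\rangle$, and determines the unique $a_\m\in\{0,1,\ldots,\order(\theta_\m)-1\}$ with $\gamma_\m=\theta_\m^{a_\m}$ by trying all values; then $(a_\m)_\m$ is a preimage of $\gamma$. For each $p$ one proceeds in the same way, with the subset $\{\m:p\mid\order(\theta_\m)\}$, the generators $\theta_{\m,p}$ (and the corresponding elements $\eta_{\m,p}\in B$), and relations recording the orders of the $\theta_{\m,p}$, obtaining an efficient presentation of $\mu(B)_p$.

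There is no real obstacle in this argument: once Proposition \ref{Balgorrrr2} is in hand, everything reduces to the product structure of $B$. The only point that requires attention is the polynomial runtime of the discrete-logarithm step in condition (c), i.e.\ that the orders $\order(\theta_\m)$ and their $p$-parts are small enough that the naive search over all exponents is fast. This is precisely where the bound $\order(\theta_\m)\le 2\,[E/\m:\Q]^2$, equivalently Lemma \ref{muApremark}(iv), is used, in parallel with the correctness and timing analysis of Algorithms \ref{muEthmalgor} and \ref{muEthmalgor2}.
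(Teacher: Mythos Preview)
Your argument is correct and is essentially the paper's own proof spelled out in detail: the paper simply invokes Proposition~\ref{Balgorrrr2} together with the product decompositions $\mu(B)\cong\prod_\m\mu(A_\sep/(\m\cap A_\sep))$ and $\mu(B)_p\cong\prod_\m\mu(A_\sep/(\m\cap A_\sep))_p$, and refers back to the treatment of $\mu(E)$ in Section~\ref{Qalssect} (Algorithms~\ref{muEthmalgor} and~\ref{muEthmalgor2}) for the construction and efficiency of the presentation. Your explicit unpacking of conditions (a)--(c) of Definition~\ref{effpresdefn}, with $\theta_\m$ in place of $\zeta_\m$, is exactly what that reference amounts to.
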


\begin{proof}
This follows directly from Proposition \ref{Balgorrrr2}
and the isomorphisms 
$$\mu(B) \cong \prod_{\m\in\Spec(E)} \mu(A_\sep/(\m\cap A_\sep)) \quad
\text{ and } \quad
\mu(B)_p \cong \prod_{\m\in\Spec(E)} \mu(A_\sep/(\m\cap A_\sep))_p$$
in the same way as for $\mu(E)$ in section \ref{Qalssect}.
\end{proof}

\section{From $\mu(B)_p$ to $\mu(C)_p$}
\label{BtoCsect}

Let $A$, $E$, $A_\sep$, and $B$ be as in the previous section,
and fix a prime number $p$.
Let
\begin{equation}
\label{Cdefn}
C = A_\sep\left[1/p\right] \cap B.
\end{equation}

We have
$$
A_\sep \subset C \subset B \subset E_\sep
$$
so $C$ is an order with $C_\Q=E_\sep$, and
$$
C = \{ x\in B : p^ix\in A_\sep \text{ for some $i\in\Z_{\ge 0}\}$}.
$$
The group $C/A_\sep$
is finite of $p$-power order, and the group $B/C$ is finite of order
prime to $p$. 
These orders can be quickly computed from the order of $B/A_\sep$
computed in Proposition \ref{Balgorrrr}.
We emphasize that $C$ depends on $p$.

Let $t = (B:C)$. Then $C/A_\sep = t(B/A_\sep)$, so
$C=tB+A_\sep$, which is the image of the map
$B \oplus A_\sep \to B$, $(x,y) \mapsto tx+y$.
Thus one can find a $\Z$-basis for $C$ from the image algorithm
in \S 14 of \cite{HWLMSRI}.

\begin{prop}
\label{nonpcptprop}
Suppose that $A$ is an order and $p$ is a prime.
Suppose $\m,\nn\in\Spec(E)$ with $\m\neq\nn$.
Then:
\begin{enumerate}
\item
$C/((\m\cap C) + (\nn\cap C))$ is the non-$p$-component of
$A_\sep/((\m\cap A_\sep) + (\nn\cap A_\sep))$;
\item 
$\m$ and $\nn$ are connected in $\Gamma(C)$ if and only if
$n(A_\sep,\m,\nn) \not\in p^{\Z_{\ge 0}}$.
\end{enumerate}
\end{prop}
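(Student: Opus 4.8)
The plan is to work entirely with the finite abelian groups attached to the two orders. Write $I_D = (\m\cap D) + (\nn\cap D)$ for $D \in \{A_\sep, C, B\}$, so that $n(D,\m,\nn) = \#(D/I_D)$ (finite by Lemma \ref{nDlem}) in the sense of Definition \ref{graphdef}, and the inclusions $A_\sep \subseteq C \subseteq B$ give $I_{A_\sep} \subseteq I_C \subseteq I_B$ together with natural homomorphisms $A_\sep/I_{A_\sep} \to C/I_C \to B/I_B$. Part (ii) will then be immediate from part (i): if $C/I_C$ is the non-$p$-component of $A_\sep/I_{A_\sep}$, then $n(C,\m,\nn) = \#(C/I_C)$ is the prime-to-$p$ part of $n(A_\sep,\m,\nn)$, and this is $>1$ (equivalently, $\m$ and $\nn$ are joined by an edge in $\Gamma(C)$) exactly when $n(A_\sep,\m,\nn)$ has a prime factor other than $p$, i.e. when $n(A_\sep,\m,\nn)\notin p^{\Z_{\ge 0}}$. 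So the whole proposition reduces to analyzing the natural map $\varphi\colon A_\sep/I_{A_\sep} \to C/I_C$.

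For part (i) I would establish three facts about $\varphi$. First, $C/I_C$ has order prime to $p$: here I use that $\Gamma(B)$ has no edges, so $I_B = B$ and $1 = u+v$ with $u\in\m\cap B$, $v\in\nn\cap B$; since $m := (B:C)$ is prime to $p$ one has $mB\subseteq C$, hence $mu\in\m\cap C$ and $mv\in\nn\cap C$, so $m\cdot 1\in I_C$ and $C/I_C$ is annihilated by the prime-to-$p$ integer $m$. Second, $\coker(\varphi) = C/(A_\sep + I_C)$ is a quotient of $C/A_\sep$, hence a $p$-group; combined with the first fact this forces $\varphi$ to be surjective. Third, $\ker(\varphi) = (A_\sep\cap I_C)/I_{A_\sep}$ is a $p$-group: with $p^a := (C:A_\sep)$ one has $p^aC\subseteq A_\sep$, hence $p^a(\m\cap C)\subseteq\m\cap A_\sep$ and likewise for $\nn$, so $p^aI_C\subseteq I_{A_\sep}$ and $\ker(\varphi)$ is annihilated by $p^a$. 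Putting these together, $\varphi$ is a surjection onto a group of prime-to-$p$ order whose kernel is a $p$-group, so the kernel must be exactly the $p$-primary subgroup $(A_\sep/I_{A_\sep})_p$, and $\varphi$ identifies $C/I_C$ with $(A_\sep/I_{A_\sep})/(A_\sep/I_{A_\sep})_p$, the non-$p$-component. This is (i).

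The calculations here are all short, and the only step carrying genuine content is the prime-to-$p$ bound on $\#(C/I_C)$, which is what forces one to bring in the ambient order $B$ and the (already recorded) fact that $\Gamma(B)$ is edgeless; everything else is bookkeeping with the containments $I_{A_\sep}\subseteq I_C\subseteq I_B$ and the exponents of $C/A_\sep$ and $B/C$. An essentially equivalent alternative is to localize at each prime $\ell$: flatness of $\Z_{(\ell)}$ over $\Z$, together with the $p$-power order of $C/A_\sep$ and the prime-to-$p$ order of $B/C$, gives $(A_\sep/I_{A_\sep})\otimes\Z_{(\ell)} \isom (C/I_C)\otimes\Z_{(\ell)}$ for $\ell\neq p$ and $(C/I_C)\otimes\Z_{(p)}\cong (B/I_B)\otimes\Z_{(p)} = 0$, which again yields (i). I would present the first version, as it avoids flatness and stays within the elementary style of the rest of the section.
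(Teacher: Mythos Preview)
Your proof is correct and follows essentially the same route as the paper. Both arguments use that $\Gamma(B)$ is edgeless to show $t\tilde C=0$ where $t=(B:C)$, and then show the natural map $\tilde A_\sep\to\tilde C$ is surjective with $p$-power kernel; the only cosmetic difference is that the paper packages steps~2 and~3 as a single composition argument (the map $\tilde C\xrightarrow{p^r}\tilde A_\sep\xrightarrow{1}\tilde C$ is an automorphism of a finite group), whereas you argue directly with the cokernel and with $p^aI_C\subseteq I_{A_\sep}$.
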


\begin{proof}
For $Z = A_\sep, B$, and $C$, write
$\tilde{Z}$ for the finite abelian group
$Z/((\m\cap Z) + (\nn\cap Z))$ (cf.~Lemma \ref{nDlem}).
Let $p^r = (C:A_\sep)$ and $t = (B:C)$.
Then $\gcd(p^r,t)=1$.
Since 
$\Gamma(B)$ has no edges, we have
$(\m\cap B) + (\nn\cap B) = B$, so $\tilde{B}=0$.
Consider the maps 
$
\xymatrix{\tilde{A}_\sep \ar@<.5ex>^-{1}[r] & \tilde{C}\ar@<.5ex>^-{p^r}[l] 
\ar@<.5ex>^-{1}[r] & \tilde{B}=0\ar@<.5ex>^-{t}[l]  }
$
where a map $\xymatrix{\tilde{Z}_1 \ar^-{d}[r] & \tilde{Z}_2}$
is the map induced by multiplication by $d$ on $Z_1$.
(The maps are well-defined since $A_\sep \subset C \subset B$ and
$p^r C \subset A_\sep$ and $tC \subset B$.)

Since $\tilde{B}=0$, taking the composition
$
\xymatrix{\tilde{C}
\ar^-{1}[r] & \tilde{B} \ar^-{t}[r] & \tilde{C} }
$
shows that $t\tilde{C} = 0$.
If $x\in \tilde{C}$ and $p^rx=0$, then since $\gcd(p^r,t)=1$ we have
$x=0$. Thus, the composition
$
\xymatrix{\tilde{C}
\ar^-{p^r}[r] & \tilde{A}_\sep \ar^-{1}[r] & \tilde{C} }
$
is an injection, and thus an automorphism $\alpha$ of the finite abelian group $\tilde{C}$.
It follows that 
$
\xymatrix{\tilde{A}_\sep  \ar^-{1}[r] & \tilde{C} }
$
is surjective and
$
\xymatrix{\tilde{C} \ar^-{p^r}[r] & \tilde{A}_\sep}
$
is injective.
Further, letting $\tilde{A}_\sep[p^r]$ denote the kernel of
multiplication by $p^r$ in $\tilde{A}_\sep$, we have
$$\ker(\xymatrix{\tilde{A}_\sep  \ar^-{1}[r] & \tilde{C} }) = 
\ker(\xymatrix{\tilde{A}_\sep  \ar^-{1}[r] & \tilde{C} 
\ar^-{p^r}[r] & \tilde{A}_\sep}) = \tilde{A}_\sep\left[p^r\right].$$
This gives a split short exact sequence
$$
\xymatrix{0 \ar[r] & \tilde{A}_\sep\left[p^r\right] \ar[r] & 
\tilde{A}_\sep \ar@<.5ex>^-{1}[r]  & 
\tilde{C}\ar@<.5ex>^-{p^r\alpha^{-1}}[l] \ar[r] & 0 }
$$
with $\tilde{C}$ killed by $t$.
Thus $\tilde{C}$ is the non-$p$-component of
$\tilde{A}_\sep$, proving (i).

We have $n(A_\sep,\m,\nn) \not\in p^{\Z_{\ge 0}}$
if and only if 
$\tilde{A}_\sep$ is not a $p$-group, i.e.,
if and only if 
$\tilde{C} \neq 0$ (by (i)).
But  $\tilde{C} \neq 0$ 
if and only if 
$\m$ and $\nn$ are connected in $\Gamma(C)$.
This gives (ii).
\end{proof}

One could compute $\Gamma(C)$ by applying 
Algorithm \ref{graphalgor} with $D=C$.
Thanks to Proposition \ref{nonpcptprop} 
we can compute $\Gamma(C)$ without
actually computing $C$, as follows.

\begin{algorithm}
\label{graphCalgor}
The algorithm takes an order $A$ and the numbers $n(A_\sep,\m,\nn)$,
and computes the graph $\Gamma(C)$
and its connected components.
\begin{enumerate}
\item
Connect two vertices $\m$ and $\nn$  
if and only if
$n(A_\sep,\m,\nn) \not\in p^{\Z_{\ge 0}}$.
\item
Output the associated graph and the connected components.
\end{enumerate}
\end{algorithm}

\begin{defn}
\label{CsubWdef}
If $W \subset \Spec(E)$, let $C_W$ denote the image of $C$
in the quotient $$\prod_{\m\in W}A_\sep/(\m\cap A_\sep)$$ of $B$.
\end{defn}

\begin{lem}
\label{Cprodlem}
Let $\Omega$ denote the set of connected components of the graph $\Gamma(C)$.
Then the natural map 
$F : C \to \prod_{W\in\Omega} C_W$ is an isomorphism. 
\end{lem}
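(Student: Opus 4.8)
The plan is to derive the lemma from the description of $\prid(C)$ in terms of $\Gamma(C)$. First I would note that $C$ is an order in the separable $\Q$-algebra $C_\Q = E_\sep$, so Proposition \ref{bijgraphprop}(ii) applies with $D = C$ and yields $\prid(C) = \{\, e_W : W\in\Omega \,\}$; in particular $e_W\in C$ for every $W\in\Omega$.

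Next I would invoke the elementary ring-theoretic fact underlying the statement: the family $(e_W)_{W\in\Omega}$ is a complete system of orthogonal idempotents in $C$. Indeed, orthogonality $e_W e_{W'} = 0$ for $W\neq W'$ and completeness $\sum_{W\in\Omega} e_W = 1$ both already hold in $B = \prod_{\m\in\Spec(E)} A_\sep/(\m\cap A_\sep)$, since $\Spec(E) = \bigsqcup_{W\in\Omega}W$ and $e_W$ is the tuple equal to $1$ in the coordinates $\m\in W$ and to $0$ elsewhere. Hence the map $c\mapsto (e_W c)_{W\in\Omega}$ is a ring isomorphism $C \isom \prod_{W\in\Omega} e_W C$: it is injective because $c = (\sum_W e_W)c$, and surjective because any $(c_W)_{W}$ with $c_W\in e_W C$ is the image of $\sum_{W}c_W\in C$.

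It then remains to identify this isomorphism with $F$. For $W\in\Omega$, the projection of $B$ onto the factor $\prod_{\m\in W}A_\sep/(\m\cap A_\sep)$ is, under the identification of that factor with $e_W B$, just multiplication by $e_W$. Since $e_W\in C$, the image $C_W$ of $C$ under this projection equals $e_W C$, and the $W$-component of $F$ is the map $c\mapsto e_W c$; so $F$ is exactly the isomorphism of the previous paragraph, and the lemma follows.

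In fact there is no genuine obstacle here: all the substance is in Proposition \ref{bijgraphprop}(ii), which says precisely that the connected components of $\Gamma(C)$ index the primitive idempotents of $C$. The only points requiring a little care are verifying the hypothesis of that proposition (that $C$ is an order in a separable $\Q$-algebra, which follows from $C_\Q = E_\sep$) and checking that the abstract idempotent decomposition of $C$ coincides with the concrete map $F$ coming from the product structure of $B$.
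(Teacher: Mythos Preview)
Your proof is correct and follows essentially the same approach as the paper: both arguments rest on Proposition~\ref{bijgraphprop}(ii) to conclude $e_W\in C$ for each $W\in\Omega$, and then use that the $e_W$ form a complete orthogonal system of idempotents to see that $F$ is bijective. The paper's write-up is slightly more terse (it verifies injectivity from $C\subset B$ and surjectivity by exhibiting $\sum_W c_W e_W$ as a preimage), while you first establish the idempotent decomposition $C\cong\prod_W e_W C$ and then identify it with $F$, but the content is the same.
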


\begin{proof}
The map 
$F$ is injective,
since 
$$
C \subset B = \prod_{W\in\Omega}\prod_{\m\in W} A_\sep/(\m\cap A_\sep).
$$
If $f_W : C \onto C_W$ is the natural map, $e_W$ is as defined in
Notation \ref{eWdefn} with $D=C$, and
$x = (f_W(c_W))_{W\in\Omega}$ is an arbitrary element
of $\prod_{W\in\Omega} C_W$,
then $F(\sum_{W\in\Omega} c_W e_W) =x$, so $F$ is surjective.
The result now follows from Proposition \ref{bijgraphprop}(ii).
\end{proof}

\begin{prop}
\label{Wconnprop}
Suppose $A$ is an order and $p$ is a prime number.
Recall $C$ as defined in \eqref{Cdefn}.
Fix a subset $W \subset \Spec(E)$ 
for which the induced subgraph of $\Gamma(C)$ is connected.
Then:
\begin{enumerate}
\item 
the ring $C_W$ is connected,
\item
the natural map $\mu(C_W)_p \to \mu(C_{\{ \m\}})_p$ is injective for all $\m\in W$,
\item
the group $\mu(C_W)_p$ is cyclic,
\item
if $W'$ is a non-empty subset of $W$, then the natural map
$\mu(C_W)_p \to \mu(C_{W'})_p$ is injective.
\end{enumerate}
\end{prop}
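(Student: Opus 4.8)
The plan is to establish the four parts in the given order, since each relies on the previous ones. For (i), I want to apply Proposition \ref{bijgraphprop}(ii) to the order $C$: the primitive idempotents of $C$ correspond bijectively to the connected components of $\Gamma(C)$, and $C$ is connected precisely when $\Gamma(C)$ is connected. The point is to identify $C_W$ with an order whose attached graph is exactly the induced subgraph of $\Gamma(C)$ on $W$. This should follow from Lemma \ref{Cprodlem}, which decomposes $C$ as $\prod_{W\in\Omega}C_W$ with $\Omega$ the set of connected components of $\Gamma(C)$; restricting to a single connected component $W$ identifies $C_W$ (up to the bijection of Proposition \ref{bijgraphprop}) with an order sitting inside $\prod_{\m\in W}A_\sep/(\m\cap A_\sep)$ whose graph is the induced subgraph. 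Since that induced subgraph is connected by hypothesis, $C_W$ is a connected ring.

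For (ii), fix $\m\in W$ and consider the natural surjection $C_W\onto C_{\{\m\}}=A_\sep/(\m\cap A_\sep)$; I want its restriction to $\mu(C_W)_p$ to be injective. The key observation is that $C_W$ is connected by (i), and $B/C$ (hence any quotient's defect) has order prime to $p$. More precisely, a root of unity $\zeta\in\mu(C_W)_p$ of $p$-power order satisfies $\zeta^{p^k}=1$; to apply Proposition \ref{degbd} to $f=X^{p^k}-1$ I would need $p^k\cdot 1\in C_W^\ast$, which fails. Instead the right tool is to pass to the index $t=(B:C)$, which is coprime to $p$: because $\mu(C_W)_p$ injects into $\mu(B_W)_p$ after multiplying by $t$ — or rather, because the cokernel of $C_W\to\prod_{\m\in W}A_\sep/(\m\cap A_\sep)$ is killed by $t$ — and $t$ is a unit on $p$-power-torsion, the kernel of $\mu(C_W)_p\to\mu(C_{\{\m\}})_p$ would have to be trivial. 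I expect the clean way to phrase this is: if $\zeta\in\mu(C_W)_p$ maps to $1$ in $C_{\{\m\}}$, then $\zeta-1$ lies in $\m\cap C_W$; combined with connectedness and the coprimality of $t$ to $p$, a chain of idempotent arguments along the edges of the connected graph $W$ forces $\zeta=1$. This coprimality-plus-connectedness propagation is the main obstacle, and is exactly where Proposition \ref{nonpcptprop} (edges of $\Gamma(C)$ detect non-$p$-parts) does its work.

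For (iii), cyclicity of $\mu(C_W)_p$: by (ii) it injects into $\mu(C_{\{\m\}})_p=\mu(A_\sep/(\m\cap A_\sep))_p$, which is cyclic by Proposition \ref{Balgorrrr2} (or Lemma \ref{muApremark}(iii), since $A_\sep/(\m\cap A_\sep)$ is an order with fraction field a number field). A subgroup of a cyclic group is cyclic, so we are done immediately.

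For (iv), given $\emptyset\neq W'\subset W$ with $W$ connected, pick any $\m\in W'$; then the map $\mu(C_W)_p\to\mu(C_{W'})_p$ composed with $\mu(C_{W'})_p\to\mu(C_{\{\m\}})_p$ is the map of (ii), which is injective, hence $\mu(C_W)_p\to\mu(C_{W'})_p$ is injective. Note that (iv) does \emph{not} require the induced subgraph on $W'$ to be connected, which is why the factorization through a single point is the correct device. The substantive content of the whole proposition is concentrated in (i) and (ii); (iii) and (iv) are formal consequences. The hardest step is (ii), specifically making rigorous the passage "coprime index $t$ plus connectedness of $\Gamma(C)$ on $W$ implies the $p$-torsion root of unity is detected at any single vertex," for which I would induct on the length of a path in the connected graph $W$, at each edge $(\m,\nn)$ using that $n(A_\sep,\m,\nn)\notin p^{\Z_{\ge 0}}$ (Proposition \ref{nonpcptprop}(ii)) to glue the local data, and at each step invoking Proposition \ref{degbd}(b) for a suitable separable polynomial over the connected ring $C_W$ to get injectivity of the relevant reduction map.
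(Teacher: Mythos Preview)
The main gap is in part (ii). You correctly observe that $X^{p^k}-1$ is not separable over $C_W$, so Proposition~\ref{degbd}(b) cannot be applied there directly; but your proposed workaround---``a suitable separable polynomial over the connected ring $C_W$'' combined with induction along edges---cannot work as written, because no such separable polynomial exists. Indeed, if $\zeta\in C_W$ has order $p$ then $\zeta-1$ is a non-unit non-zero element (it lies in every prime of $C_W$ over $p$), so by Proposition~\ref{degbd}(a) no separable $f\in C_W[X]$ can vanish at both $1$ and~$\zeta$. The missing idea, which is the heart of the paper's argument, is to \emph{invert $p$}: one passes to $C_W[1/p]$, over which $X^m-1$ is separable for every $p$-power $m$. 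The nontrivial point is that $C_W[1/p]$ is still connected; the paper shows this by observing $\id(C_W[1/p])\subset\id(\prod_{\m\in W}E/\m)=\id(B_W)$ and then using $\gcd((B_W:C_W),p)=1$ to pull any such idempotent back into $C_W$, whence $\id(C_W[1/p])=\id(C_W)=\{0,1\}$ by~(i). With connectedness of $C_W[1/p]$ in hand, Corollary~\ref{cycliclemma} yields~(iii) and Proposition~\ref{degbd}(b) yields~(iv) (and hence~(ii)) in one stroke; no path-induction is needed.

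Your route to~(i) via Lemma~\ref{Cprodlem} is also incomplete: that lemma decomposes $C$ along the \emph{full} connected components of $\Gamma(C)$, whereas the hypothesis allows $W$ to be any subset whose induced subgraph is connected (and Algorithm~\ref{BtoCalgor} applies the Proposition to such proper subsets $W_i$). You would still need to verify that $\Gamma(C_W)$ equals the induced subgraph of $\Gamma(C)$ on $W$, i.e.\ that $C_W/(\a_\m+\a_\nn)\cong C/((\m\cap C)+(\nn\cap C))$ for $\m,\nn\in W$; at that point you are essentially reproving Lemma~\ref{idempotClem}, which is what the paper invokes directly. Your deductions of~(iii) and~(iv) from~(ii) are fine (the paper in fact runs the logic the other way, deriving~(ii) as the special case $W'=\{\m\}$ of~(iv)), but since~(ii) has the gap above, the argument as a whole does not go through.
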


\begin{proof}
Part (i) follows from Lemma \ref{idempotClem}.

Let $B_W = \prod_{\m\in W}A_\sep/(\m\cap A_\sep).$
We have 
$$\id(C_W\left[{1/p}\right]) \subset   \id\left(\prod_{\m\in W}E/\m\right)
= \id(B_W).$$
Recall $B$ from \eqref{Bdefn}.
Since $(B:C)$ is coprime to $p$, so is $(B_W:C_W)$.
Suppose $e\in \id(C_W\left[{1/p}\right])$. Then $e\in \id(B_W)$ and
there exists $m\in\Z - p\Z$ such that $me\in C_W$
(e.g., $m=(B_W:C_W)$).
Further, there exists $k\in \Z_{\ge 0}$ such that $p^ke\in C_W$.
Since $m$ and $p^k$ are coprime, we have $e\in C_W$.
Thus, $\id(C_W\left[{1/p}\right]) = \id(C_W) =\{ 0,1\}$,
so $C_W\left[{1/p}\right]$ is connected.
Now by Corollary \ref{cycliclemma}  with 
$R = C_W\left[{1/p}\right]$ and 
$m=\#\mu(C_W\left[{1/p}\right])_p$,
the group $\mu(C_W\left[{1/p}\right])_p$ is cyclic, so its subgroup
$\mu(C_W)_p$ is cyclic as well, which is (iii).
Also, by Proposition \ref{degbd}(b) with
$R = C_W\left[{1/p}\right]$ and $f = X^{m} - 1$, 
the map
$\mu(C_W\left[{1/p}\right])_p \to \mu(C_{W'}\left[{1/p}\right])_p$
is injective for each non-empty $W'\subset W$.
This implies (iv). With $W' = \{ \m\}$ one obtains (ii).
\end{proof}

\begin{rem}
If $A$ is a connected  
order in a separable $\Q$-algebra
and $p$ is a prime number that does not divide $\#(B/A)$,
then $\mu(A)_p$ is cyclic. 
This follows from Proposition \ref{Wconnprop}(iii);
$C=A$ since $E=E_\sep$
and $p\nmid\#(B/A)$, and one can take $C=C_W$
since $A$ is connected.
\end{rem}

By Proposition \ref{Wconnprop}(ii,iii),
if $W$ is a connected component of $\Gamma(C)$, then 
the natural map $$\mu(C_W)_p \to \mu(A/(\m\cap A))_p$$ is
injective for all $\m\in W$, and 
$\mu(C_W)_p$ is cyclic.
This gives an efficient algorithm for computing
$\mu(C_W)_p$, and thus a set of generators for
$\mu(C)_p$, as follows.

\begin{algorithm}
\label{BtoCalgor}
Given an order $A$ and a prime $p$, 
the algorithm finds an efficient presentation for $\mu(C)_p$.
\begin{enumerate}
\item
Apply Algorithm \ref{Balgorrrr2} to compute 
a generator of the cyclic group 
$\mu(A_\sep/(\m \cap A_\sep))_p$ for each $\m\in\Spec(E)$.
\item
Apply Algorithm \ref{graphCalgor} to compute $\Gamma(C)$ and
its connected components $W$.
\item
For each $W$, do the following:
\begin{enumerate}
\item
Apply the image algorithm in \S 14 of
\cite{HWLMSRI} to compute a basis for the order 
$$C_W = \image(C \to\prod_{\m\in W}E/\m).$$
\item
Pick $\m_1\in W$ with $\#\mu(A_\sep/(\m_1 \cap A_\sep))_p$ minimal.
\item
Choose
$$
W_1 = \{\m_1\} \subset W_2 = \{\m_1,\m_2\} \subset \ldots \subset W
$$
such that $\# W_i = i$ for all $i\ge 1$, 
and $W_i = W_{i-1} \cup \{\m_i \}$
for all $i\ge 2$, and each $\m_i$ is connected in $\Gamma(C)$ to some
$\m_j$ with $j<i$.
\item
For $i=1,2,\ldots$ compute each $\mu(C_{W_i})_p$, and  a generator for it, in succession
by using that $\mu(C_{W_1})_p = \mu(A_\sep/(\m_1 \cap A_\sep))_p$ is given,
and   
for $i>1$ listing all ordered pairs in
$\mu(C_{W_{i-1}})_p \times \mu(A_\sep/(\m_i \cap A_\sep))_p$ and 
testing whether they are in $C_{W_i}$, and using that
$$\mu(C_{W_i})_p = 
C_{W_i} \cap (\mu(C_{W_{i-1}})_p \times \mu(A_\sep/(\m_i \cap A_\sep))_p).$$
This gives a generator of $\mu(C_W)_p$ for each $W$ in the set
$\Omega$ of connected components of $\Gamma(C)$.
Let $\zeta_W\in \prod_{V\in\Omega}\mu(C_V)_p$ be the element
with this generator as its $W$-th
component, and all other components $1$.
\end{enumerate}
\item
View the set $S=\{ \zeta_W : W\in\Omega \}$ in $\mu(C)_p$
via the isomorphism
$\mu(C)_p \cong \prod_{W\in\Omega}\mu(C_W)_p$ of Lemma \ref{Cprodlem},
let $R = \{ \order(\zeta_W)(W\text{-th basis vector})\}$, and
output $\langle S|R\rangle$.
\end{enumerate}
\end{algorithm}

\begin{prop}
Algorithm \ref{BtoCalgor}  
gives correct output and runs in polynomial time.
\end{prop}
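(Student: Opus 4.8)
The plan is to establish correctness and polynomial running time of Algorithm \ref{BtoCalgor} step by step, leaning on the structural results of this section. For correctness, the key assertions to verify are: (1) that $\mu(C)_p \cong \prod_{W\in\Omega}\mu(C_W)_p$, which is immediate from Lemma \ref{Cprodlem} together with the fact that taking $p$-primary parts commutes with finite products; (2) that the inductive computation of $\mu(C_{W_i})_p$ in step 3(d) is valid, namely that $\mu(C_{W_i})_p = C_{W_i} \cap \bigl(\mu(C_{W_{i-1}})_p \times \mu(A_\sep/(\m_i\cap A_\sep))_p\bigr)$; and (3) that the set $S$ and relations $R$ produced in step 4 genuinely form an efficient presentation in the sense of Definition \ref{effpresdefn}. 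Assertion (2) follows because $C_{W_i}$ embeds into $C_{W_{i-1}} \times A_\sep/(\m_i\cap A_\sep)$ compatibly with the projection maps (using $C_{W_i} \subset B_{W_i} = B_{W_{i-1}} \times A_\sep/(\m_i\cap A_\sep)$), so an element of $\mu(C_{W_i})_p$ is exactly an element of $C_{W_i}$ whose two projections are roots of unity of $p$-power order; and by Proposition \ref{Wconnprop}(iv) these projections already live in $\mu(C_{W_{i-1}})_p$ and $\mu(C_{\{\m_i\}})_p$ respectively.

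First I would record the preprocessing: one computes $A_\sep$ and the numbers $n(A_\sep,\m,\nn)$ via Algorithms \ref{Asepalgor} and \ref{graphalgor}, then applies Algorithm \ref{graphCalgor} (justified by Proposition \ref{nonpcptprop}(ii)) to get $\Gamma(C)$ and its connected components, all in polynomial time; step 1 invokes Proposition \ref{Balgorrrr2}. The chain $W_1 \subset W_2 \subset \cdots \subset W$ in step 3(c) can be built greedily by a spanning-tree traversal of the connected induced subgraph on $W$, which is polynomial-time by standard graph algorithms. The crucial point for the running time is that the enumeration in step 3(d) is feasible: listing all ordered pairs in $\mu(C_{W_{i-1}})_p \times \mu(A_\sep/(\m_i\cap A_\sep))_p$ requires that both factors be small. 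Here I would invoke Proposition \ref{Wconnprop}(iii) (so $\mu(C_{W_{i-1}})_p$ is cyclic) and Lemma \ref{muApremark}(iv): an element of order $p^k$ in $\mu(E)$ forces $\varphi(p^k) \le \dim_\Q E = \rk_\Z A$, so the exponent — hence the order — of each cyclic group $\mu(C_{W_i})_p$ and of $\mu(A_\sep/(\m_i\cap A_\sep))_p$ is polynomially bounded in the input size. Thus each product has polynomially many elements, and testing membership in $C_{W_i}$ (checking integrality of coordinates on a $\Z$-basis of $C_{W_i}$, computed by the image algorithm in step 3(a)) is a polynomial-time linear-algebra operation.

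Once a generator $\zeta_W$ of each cyclic group $\mu(C_W)_p$ is in hand, I would verify that $\langle S | R\rangle$ is an efficient presentation: condition (a) holds because the $\zeta_W$ generate $\prod_W \mu(C_W)_p \cong \mu(C)_p$; condition (b) holds because the kernel of $\Z^{\Omega} \to \prod_W \mu(C_W)_p$ is generated by $\order(\zeta_W)$ times the $W$-th standard basis vector, since the group is a direct product of cyclic groups; and condition (c), the discrete-logarithm solvability, is trivial here because each component is cyclic of small order, so given $\gamma = (\gamma_W)_W \in \mu(C)_p$ one finds the exponent vector by exhausting $0,1,\dots,\order(\zeta_W)-1$ in each coordinate. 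The main obstacle — and the reason the algorithm is organized as it is — is keeping every group that gets explicitly enumerated small; this is exactly what the cyclicity statement Proposition \ref{Wconnprop}(iii) and the exponent bound Lemma \ref{muApremark}(iv) deliver, and without the injectivity statements in Proposition \ref{Wconnprop}(ii,iv) one could not justify the inductive intersection formula that avoids ever enumerating the full (possibly large) group $\mu(B)_p$.
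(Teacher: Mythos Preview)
Your proposal is correct and follows essentially the same route as the paper: Lemma \ref{Cprodlem} for the product decomposition, the embedding $C_{W_i}\subset C_{W_{i-1}}\times A_\sep/(\m_i\cap A_\sep)$ for the inductive formula, Proposition \ref{Wconnprop} for cyclicity and injectivity, and Lemma \ref{muApremark}(iv) for the polynomial size bound on each cyclic factor. The one point of divergence is how you establish that $\langle S|R\rangle$ is an \emph{efficient} presentation (condition (c) of Definition \ref{effpresdefn}): you argue directly that discrete logarithms can be solved by exhaustion in each small cyclic component, whereas the paper instead observes that $\mu(C)_p\subset\mu(B)_p$ and invokes Proposition \ref{Balgorrrr3} together with Algorithm \ref{genlprinalgor2a} to inherit an efficient presentation from the ambient group. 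Both arguments are valid; yours is more self-contained here, while the paper's reuses the general subgroup machinery of Section \ref{DLsect}.
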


\begin{proof}
By Lemma \ref{Cprodlem} we have 
$C \isom \prod_W C_W$. 
Thus, $\mu(C)_p \isom \bigoplus_W \mu(C_W)_p$
so the output of the algorithm is a set of generators for $\mu(C)_p$.
We have 
$$C_{W_i} \subset  
C_{W_{i-1}} \times  C_{\{\m_i\}}, \qquad
C_{\{\m_i\}} = A_\sep/(\m_i \cap A_\sep).$$
Thus,
$$\mu(C_{W_i})_p \subset 
\mu(C_{W_{i-1}})_p \times \mu(A_\sep/(\m_i \cap A_\sep))_p.$$
By Proposition \ref{Wconnprop}, the group 
$\mu(C_{W_i})_p$ injects into each factor,
and each factor is cyclic of prime power order.
Each factor has size polynomial in the size of the algorithm's
inputs (given an order of rank $n$ and an
element of order $p^k$, we have $\varphi(p^k) \le n$ by Lemma \ref{muApremark},
so $p^k \le 2n$). 
By 
Proposition \ref{Wconnprop}(ii) the natural map
$\mu(C_{W_i})_p \to \mu(A_\sep/(\m_1 \cap A_\sep))_p$
is injective,
for all $i$.
As $i$ gets larger, the groups $\mu(C_{W_i})_p$ get smaller or stay the same.
Thus one can list all 
ordered pairs, and then
efficiently test whether they are in $C_{W_i}$. 
It follows from the above that the algorithm runs in polynomial time.

The presentation $\langle S|R\rangle$ is efficient
by Algorithm \ref{genlprinalgor2a} and
Proposition \ref{Balgorrrr3},
since $\mu(C)_p \subset \mu(B)_p$.
\end{proof}

\begin{rem}
A more intelligent algorithm for step (iii)(d) is to use  
that each $\mu(C_{W_i})_p$ is cyclic (by Proposition~\ref{Wconnprop}(iii)),
and that
$\mu(C_{W_i})_p \subset \mu(C_{W_{i-1}})_p$, as follows.
Starting with $i=1$ and incrementing $i$, proceed as follows in place of step (d).
If $\mu(C_{W_{i-1}})_p$ is trivial, stop.
Otherwise, take an element $a_1\in\mu(C_{W_{i-1}})_p$
of order $p$ and for each of the $p-1$ elements 
$b_1\in\mu(A_\sep/(\m_i \cap A_\sep))_p$
of order $p$ test whether $(a_1,b_1)\in C_{W_i}$.
If there are none, stop (the group is trivial for that $W_i$).
If there is such a pair $(a_1,b_1)\in\mu(C_{W_i})$, 
if $\#\mu(C_{W_i})_p  = p$ then stop with $(a_1,b_1)$ as generator,
and otherwise take each $a_2\in\mu(C_{W_{i-1}})_p$
that is a $p$-th root of $a_1$
and for each of the $p$ possible choices of elements 
$b_2\in\mu(A_\sep/(\m_i \cap A_\sep))_p$
that are a $p$-th root of $b_1$, test whether $(a_2,b_2)\in C_{W_i}$.
As soon as such is found,
if $\#\mu(C_{W_i})_p  = p^2$ then stop with $(a_2,b_2)$ as generator,
and otherwise 
continue this process.
Injecting into each component implies one only needs to check ordered pairs
with the same order in each component.
Since $\#\mu(C_{W_i})_p$ divides $\#\mu(C_{W_{i-1}})_p$,
one only needs to go up to elements of order $\#\mu(C_{W_{i-1}})_p$.
The number of trials is $< p\log_p(\#\mu(C_{W_{i-1}})_p)$,
since there are $p$ choices each time, and
there are $\log_p(\#\mu(C_{W_{i-1}})_p)$ steps.
The final $(a_j,b_j)$ found is a generator for $\mu(C_{W_i})_p$.
\end{rem}

\section{Nilpotent ideals in finite rings}
\label{Isect}

Suppose $R$ is a finite commutative ring and $I$ is a nilpotent ideal of $R$.
Algorithm \ref{algwecalledthisd} below solves the discrete logarithm problem in the 
multiplicative group $1+I$, using the finite filtration:
$$
1+I \supset 1+I^2\supset 1+I^4\supset \cdots \supset 1,
$$
the fact that the map $x\mapsto 1+x$ is an isomorphism from
the additive group $I^{2^i}/I^{2^{i+1}}$
to the multiplicative group $(1+I^{2^i})/(1+I^{2^{i+1}})$,
and the fact that the discrete logarithm problem is easy in these additive groups.

We specify a finite commutative ring by giving a presentation for
its additive group,
i.e., a finite set of generators and a finite set of relations,
and for every pair of generators their product is expressed
as a $\Z$-linear combination of the generators.

The following result can be shown using standard methods.

\begin{prop}
\label{wecalledthisb}
There is a deterministic polynomial-time algorithm that, given
a finite commutative ring $R$ and $2$ ideals $I_1$ and $I_2$ of $R$ such that
$I_2 \subset I_1$, computes an efficient presentation of 
the finite abelian group $I_1/I_2$.
\end{prop}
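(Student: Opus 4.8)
The plan is to reduce the whole statement to standard integer linear algebra on the additive group of $R$: Hermite and Smith normal form computations together with the kernel and image routines of \S 14 of \cite{HWLMSRI}.

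First I would move to a purely additive picture. The additive group $R^+$ of $R$ comes presented by finitely many generators $e_1,\dots,e_n$ and relations, so it is a quotient $\Z^n/\Lambda$ for an explicitly computable finite-index sublattice $\Lambda\subseteq\Z^n$; write $\pi:\Z^n\to R^+$ for the quotient map. If $I_1$ and $I_2$ are given by ideal generators, I would first convert these to \emph{additive} generators: for an ideal generator $a$, the products $e_j a$ (read off from the structure constants) generate the corresponding subgroup of $R^+$, and one collects all such products over all ideal generators. Picking lifts of these additive generators to $\Z^n$ and adjoining a $\Z$-basis of $\Lambda$, I obtain lattices $\hat I_1,\hat I_2\subseteq\Z^n$ with $\pi(\hat I_t)=I_t$ and $\hat I_t\cap\Lambda=\Lambda$, hence $\hat I_1/\hat I_2\isom I_1/I_2$; here I use the hypothesis $I_2\subseteq I_1$, which forces $\hat I_2\subseteq\hat I_1$ and is in any case easy to check.

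Next I would compute a Hermite normal form basis $v_1,\dots,v_k$ of $\hat I_1$, and express each element of a finite generating set of $\hat I_2$ (its $I_2$-generators together with a basis of $\Lambda$) in this basis --- a solvable integer linear system, since each lies in $\hat I_1$ --- producing a finite set $\RR\subseteq\Z^k$. The desired efficient presentation is then $\langle S\mid\RR\rangle$, where $S=\{\pi(v_1),\dots,\pi(v_k)\}\subseteq I_1/I_2$: since $v_1,\dots,v_k$ is a $\Z$-basis of $\hat I_1$, the map $g_S:\Z^k\to I_1/I_2$ is identified with $\hat I_1\onto\hat I_1/\hat I_2$, so its kernel is $\hat I_2$ and $\RR$ is a set of defining relations for $S$. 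For condition (c) of Definition \ref{effpresdefn}, given an element of $I_1/I_2$ presented by a representative $r\in R$, I lift $r$ to $\Z^n$, test membership in $\hat I_1$, and solve for the coordinates of the result in the basis $v_1,\dots,v_k$; this returns a preimage under $g_S$ in polynomial time. (A Smith normal form of the matrix whose columns are $\RR$ moreover gives the invariant-factor decomposition of $I_1/I_2$, should one want it.)

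I do not expect a genuine obstacle --- this is what ``standard methods'' refers to. The only points that need care are bookkeeping ones: fixing once and for all how elements of the finite group $I_1/I_2$ are encoded as bitstrings (namely by a representative $r\in R$, itself encoded through the $e_j$) so that condition (c) is meaningful and genuinely lifts to $\Z^n$; carrying out the ideal-to-subgroup conversion via the structure constants; and checking that each integer linear-algebra step above (Hermite and Smith forms, the routines of \cite{HWLMSRI}, the coordinate solves) runs in time polynomial in the bit-length of the given presentations of $R$, $I_1$, and $I_2$, which is standard.
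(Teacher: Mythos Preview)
Your proposal is correct. The paper itself supplies no proof of this proposition beyond the sentence ``The following result can be shown using standard methods,'' and what you have written is exactly a careful realization of those standard methods: pull the ideals back to full-rank sublattices of $\Z^n$ via the additive presentation of $R$, compute Hermite normal forms, and read off generators, relations, and the discrete-log routine for condition~(c) of Definition~\ref{effpresdefn} from integer linear algebra as in \cite{HWLMSRI}. There is nothing to compare; your write-up simply fills in the details the paper omits.
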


\begin{lem}
\label{wecalledthisa}
Suppose $R$ is a finite commutative ring, $I$ is an ideal of $R$ such that
$I \subset \sqrt{0_R}$, and for each $i\in\Z_{\ge 0}$ the set $B_i$ is a subset of
$I^{2^i}$ such that 
$B_i \cup I^{2^{i+1}}$ generates the additive group $I^{2^i}$. 
Let $\BB=\bigcup_{i\ge 0} B_i$.
Then 
$1+I = \langle 1+b : b\in \BB \rangle$ (as a multiplicative group).
\end{lem}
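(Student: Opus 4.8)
The plan is to exploit the finite filtration
$$
1+I \;\supseteq\; 1+I^2 \;\supseteq\; 1+I^4 \;\supseteq\; \cdots \;\supseteq\; 1+I^{2^N} = \{1\},
$$
where $N$ is chosen so that $I^{2^N}=0$; such an $N$ exists because $R$ is finite, hence $\sqrt{0_R}$ is nilpotent and $I\subseteq\sqrt{0_R}$ gives $I^n=0$ for some $n$. The engine of the proof is the fact recalled at the start of the section: for each $i$ the map $x\mapsto 1+x$ induces a group isomorphism $I^{2^i}/I^{2^{i+1}} \isom (1+I^{2^i})/(1+I^{2^{i+1}})$. First I would record why: for $x\in I^{2^i}$ the element $1+x$ is a unit, with inverse the finite sum $1-x+x^2-\cdots$, which again lies in $1+I^{2^i}$, so $1+I^{2^i}$ and $1+I^{2^{i+1}}$ are subgroups of $R^\ast$; and for $x,y\in I^{2^i}$ one has $(1+x)(1+y) = 1+(x+y)+xy$ with $xy\in I^{2^{i+1}}$, so reduction modulo $1+I^{2^{i+1}}$ converts multiplication into addition. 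Surjectivity is clear and the kernel is exactly $I^{2^{i+1}}$.

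The core is a downward induction on $j\in\{0,1,\ldots,N\}$ proving
$$
1+I^{2^j} \;\subseteq\; \langle\, 1+b : b\in \textstyle\bigcup_{i\ge j} B_i \,\rangle .
$$
The base case $j=N$ is immediate since $1+I^{2^N}=\{1\}$. For the inductive step, assume the statement for $j+1$ and take $u=1+x$ with $x\in I^{2^j}$. Since $B_j\cup I^{2^{j+1}}$ generates the additive group $I^{2^j}$, write $x=\sum_k m_k b_k + y$ with $m_k\in\Z$, $b_k\in B_j$, and $y\in I^{2^{j+1}}$, and set $v=\prod_k(1+b_k)^{m_k}$. Under the isomorphism above, both $u$ and $v$ have image in $(1+I^{2^j})/(1+I^{2^{j+1}})$ equal to the class of $\sum_k m_k b_k = x-y \equiv x \pmod{I^{2^{j+1}}}$, so $uv^{-1}\in 1+I^{2^{j+1}}$. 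By the induction hypothesis $uv^{-1}$ lies in the subgroup generated by the $1+b$ with $b\in\bigcup_{i\ge j+1}B_i$, while $v$ visibly lies in the subgroup generated by the $1+b$ with $b\in B_j$; hence $u=(uv^{-1})v$ lies in the subgroup generated by all $1+b$ with $b\in\bigcup_{i\ge j}B_i$, completing the induction.

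Taking $j=0$ yields $1+I \subseteq \langle\, 1+b : b\in\BB \,\rangle$, and the reverse inclusion is trivial since each $1+b$ with $b\in I$ already lies in the group $1+I$; this proves the lemma. (Note that for large $i$ one has $I^{2^i}=0$, so those $B_i$ contribute only the trivial element $1+0=1$, which does no harm.) The only step that requires genuine care is the verification of the isomorphism $I^{2^i}/I^{2^{i+1}} \cong (1+I^{2^i})/(1+I^{2^{i+1}})$ — in particular that $1+I^{2^i}$ is closed under inversion — but this is the standard fact flagged just before the statement, and everything else is routine bookkeeping.
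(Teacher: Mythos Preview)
Your proof is correct and follows the same approach as the paper: use the filtration by the subgroups $1+I^{2^i}$, the isomorphisms $I^{2^i}/I^{2^{i+1}} \cong (1+I^{2^i})/(1+I^{2^{i+1}})$, and the hypothesis on the $B_i$ to climb down the filtration. The paper's proof is terser---it records the isomorphism, notes that $B_i+I^{2^{i+1}}$ generates each quotient, and then simply asserts ``it now follows that $1+\BB$ generates $1+I$''---whereas you spell out that final clause as an explicit downward induction on $j$; this is the same argument, only more fully written.
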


\begin{proof}
Since $I$ is nilpotent, $1+I^{2^i}$ is a multiplicative group for 
all $i\in\Z_{\ge 0}$.
We have 
$$
I^{2^i}/I^{2^{i+1}} \isom (1+I^{2^i})/(1+I^{2^{i+1}})
$$ 
via $x\mapsto 1+x$.
Since 
$B_i\cup I^{2^{i+1}}$ generates the additive group $I^{2^{i}}$,
we have that $B_i+ I^{2^{i+1}}$ generates  $I^{2^i}/I^{2^{i+1}}$.
If $I^{2^{k+1}}=0$, then $B_k$ generates  $I^{2^k}$ and
$1+B_k$ generates the multiplicative group $1+I^{2^k}$.
It now follows that $1+\BB$ generates $1+I$.
\end{proof}

\begin{algorithm}
\label{algwecalledthisd}
Given
a finite commutative ring $R$, an ideal $I$ of $R$ such that
$I \subset \sqrt{0}$, for each $i\in\Z_{\ge 0}$ a subset $B_i$ of
$I^{2^i}$ such that 
$B_i \cup I^{2^{i+1}}$ generates the additive group $I^{2^i}$, 
with all but finitely many $B_i=\emptyset$,
and $x\in I$, the algorithm computes  
$(m_b)_{b\in \BB} \in \Z^\BB$ with $1+x = \prod_{b\in \BB} (1+b)^{m_b}$,
where $\BB=\bigcup_{i\ge 0} B_i$, as follows.
\end{algorithm}

\begin{enumerate}
\item
Let $x_0=x$. 
For $i=0,1,\ldots$ use 
Proposition \ref{wecalledthisb} 
to find $(m_b)_{b\in B_i}\in\Z^{B_i}$
such that
$$
x_i \equiv \sum_{b\in B_i} m_b b \mod I^{2^{i+1}} \text{ (in $I^{2^i}/I^{2^{i+1}}$).}
$$
Define $x_{i+1}\in I^{2^{i+1}}$ by
$$1+x_{i+1} = (1+x_{i})\prod_{b\in B_i} (1 + b)^{-m_b}.$$
As soon as $x_{i+1}=0$, terminate, 
setting $m_b=0$ for all $b\in B_j$ with $j>i$ and
outputting $(m_b)_{b\in \BB} \in \Z^\BB$.
\end{enumerate}

\begin{prop}
\label{wecalledthisd}
Algorithm \ref{algwecalledthisd}
is a deterministic algorithm that
produces correct outputs in polynomial time.
\end{prop}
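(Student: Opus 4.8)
The plan is to verify three things: that the algorithm is deterministic and well defined; that it halts with output $(m_b)_{b\in\BB}$ satisfying $1+x=\prod_{b\in\BB}(1+b)^{m_b}$; and that it uses polynomially many steps, each polynomially bounded. Determinism is immediate, as no choices are made. The heart of the correctness argument is the invariant that $x_i\in I^{2^i}$ at the start of iteration $i$, which I would prove by induction on $i$. The base case is the hypothesis $x_0=x\in I$. For the step, assume $x_i\in I^{2^i}$. Since $R$ is a finite commutative ring and $I\subseteq\sqrt{0_R}$, the ideal $I$ is nilpotent, so $1+x_i$ and each $1+b$ with $b\in B_i\subseteq I^{2^i}$ are units lying in the multiplicative group $1+I^{2^i}$; hence $1+x_{i+1}=(1+x_i)\prod_{b\in B_i}(1+b)^{-m_b}$ lies in $1+I^{2^i}$, giving $x_{i+1}\in I^{2^i}$. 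Moreover, under the isomorphism $I^{2^i}/I^{2^{i+1}}\cong(1+I^{2^i})/(1+I^{2^{i+1}})$, $y\mapsto 1+y$, established in the proof of Lemma~\ref{wecalledthisa}, the class of $1+x_{i+1}$ equals the class of $x_i-\sum_{b\in B_i}m_b b$, which is $0$ by the choice of the $m_b$; therefore $x_{i+1}\in I^{2^{i+1}}$, completing the induction. Since $I$ is nilpotent, $I^{2^i}=0$ for $i$ large, so some $x_{i+1}$ equals $0$ and the loop terminates.

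For correctness of the output, let $\ell$ be the index at which the algorithm halts, so $x_{\ell+1}=0$ and $x_0=x$. Rewriting the defining identity as $1+x_i=(1+x_{i+1})\prod_{b\in B_i}(1+b)^{m_b}$ and multiplying these for $i=0,1,\dots,\ell$ telescopes to $1+x=\prod_{i=0}^{\ell}\prod_{b\in B_i}(1+b)^{m_b}=\prod_{b\in\BB}(1+b)^{m_b}$, the last equality because $m_b=0$ for $b\in B_j$ with $j>\ell$. This incidentally re-derives the generation statement of Lemma~\ref{wecalledthisa}, since every element of $1+I$ arises as such an $1+x$.

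For the running time, I would first bound the number of iterations. A finitely generated idempotent ideal contained in the nilradical is $0$ (such an ideal is generated by an idempotent, and the only idempotent in $\sqrt{0_R}$ is $0$), so $I^{2^{i+1}}=I^{2^i}$ forces $I^{2^i}=0$; hence the subgroups $I\supseteq I^2\supseteq I^4\supseteq\cdots$ of $(R,+)$ strictly decrease until they reach $0$, which therefore happens within $\log_2\#R$ steps. In a single iteration one computes $I^{2^{i+1}}=(I^{2^i})^2$ by squaring the ideal (routine $\Z$-linear algebra), applies Proposition~\ref{wecalledthisb} to produce an efficient presentation of $I^{2^i}/I^{2^{i+1}}$, and then, since $B_i$ generates this group (because $B_i\cup I^{2^{i+1}}$ generates $I^{2^i}$), invokes Algorithm~\ref{genlprinalgor2} on that presentation and the generating set $B_i$ to express the class of $x_i$ over $B_i$, obtaining exponents $m_b$ of polynomial size, which one may reduce modulo $\#R$. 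Finally $x_{i+1}$ is formed as $(1+x_i)\prod_{b\in B_i}(1+b)^{-m_b}-1$; here each $1+b$ is a unit of $R$, invertible in polynomial time by solving a $\Z$-linear system (cf.\ \S14 of \cite{HWLMSRI}), the powers are computed by repeated squaring, and the product has $\#B_i$ factors. All intermediate elements remain of polynomial bit-size, and $\#\BB$ is bounded since only the $B_i$ with $i\le\log_2\#R$ can be nonempty.

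I expect the main obstacle to be the inductive claim $x_{i+1}\in I^{2^{i+1}}$: it asserts exactly that the correction factor $\prod_{b\in B_i}(1+b)^{-m_b}$ cancels $x_i$ precisely modulo the next term of the filtration, and it is this that makes the telescoping collapse to the right answer. The verification must pass through the additive-to-multiplicative isomorphism of Lemma~\ref{wecalledthisa}, which identifies the relevant groups only modulo $I^{2^{i+1}}$ rather than on the nose, so some care is needed in tracking classes. A lesser technical point is the Nakayama-type argument bounding the length of the dyadic filtration, which underlies the polynomial bound on the number of iterations.
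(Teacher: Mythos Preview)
Your argument is correct and follows the same approach as the paper: nilpotency of $I$ forces $I^{2^j}=0$ for some $j$, hence $x_j=0$, and telescoping the defining recurrence yields $1+x=\prod_{b\in\BB}(1+b)^{m_b}$. The paper's proof is a three-line sketch that asserts $x_j\in I^{2^j}$ without justification and omits the running-time analysis; you have supplied the missing inductive verification of the invariant $x_i\in I^{2^{i}}$ via the isomorphism $I^{2^i}/I^{2^{i+1}}\cong(1+I^{2^i})/(1+I^{2^{i+1}})$, as well as a careful iteration bound, so your proof is strictly more complete.
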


\begin{proof}
Since $I$ is a nilpotent ideal, there exists 
$j\in\Z_{\ge 0}$ such that $I^{2^{j}}=0$. Then $x_j=0$ and
the algorithm gives
$$
1+x = 1+x_0 = \prod_{b\in \bigcup_{i < j} B_i} (1+b)^{m_b}
= \prod_{b\in \BB} (1+b)^{m_b}
$$ 
as desired.
\end{proof}

\begin{lem}
\label{wecalledthisc}
There is a deterministic polynomial-time algorithm that, given
a finite commutative ring $R$, an ideal $I$ of $R$ such that
$I \subset \sqrt{0}$, and for each $i\in\Z_{\ge 0}$ a subset $B_i$ of
$I^{2^i}$ such that 
$B_i \cup I^{2^{i+1}}$ generates the additive group $I^{2^i}$, computes  
a $\Z$-basis for the kernel of the map
$\Z^\BB \to 1+I$, $(m_b)_{b\in \BB} \mapsto \prod_b (1+b)^{m_b}$,
where $\BB=\bigcup_{i\ge 0} B_i$.
\end{lem}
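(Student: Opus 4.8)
The plan is to run Algorithm \ref{algwecalledthisd} ``in parallel'' on a formal symbol and thereby reduce the computation of $\ker(\Z^\BB\to 1+I)$ to the computation of kernels of maps between finite abelian groups, for which we already have efficient machinery (Proposition \ref{wecalledthisb} and the kernel algorithm of \S 14 of \cite{HWLMSRI}). First I would set up, for each $i\ge 0$, the surjection
$$
\psi_i : \Z^{B_i} \to I^{2^i}/I^{2^{i+1}}, \qquad (m_b)_{b\in B_i}\mapsto \Bigl(\sum_{b\in B_i} m_b b\Bigr) + I^{2^{i+1}},
$$
which is well-defined and surjective by the hypothesis on $B_i$; using Proposition \ref{wecalledthisb} one computes an efficient presentation of $I^{2^i}/I^{2^{i+1}}$, and then one can compute a finite generating set $N_i\subset\Z^{B_i}$ for $\ker(\psi_i)$ by the kernel algorithm. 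Since $I$ is nilpotent there is a (polynomially small) $j$ with $I^{2^j}=0$, so only finitely many steps occur.

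Next I would identify the kernel of $g:\Z^\BB\to 1+I$. Using the isomorphisms $I^{2^i}/I^{2^{i+1}}\isom (1+I^{2^i})/(1+I^{2^{i+1}})$ via $x\mapsto 1+x$ (as in the proof of Lemma \ref{wecalledthisa}), one sees that an element $b\in B_i$ lands in $1+I^{2^i}$, and the filtration $1+I\supset 1+I^2\supset\cdots$ gives $\ker(g)$ a corresponding filtration. Concretely: a tuple $(m_b)_{b\in\BB}$ lies in $\ker(g)$ precisely when, processing the blocks $B_0,B_1,\dots$ in order as in Algorithm \ref{algwecalledthisd}, the ``residue'' contributed by block $B_i$ to $I^{2^i}/I^{2^{i+1}}$ is cancelled, after accounting for the carry produced by the earlier blocks. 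The earlier-block contribution is linear: if the partial product $\prod_{b\in B_0\cup\cdots\cup B_{i-1}}(1+b)^{m_b}$ equals $1+y$ with $y\in I^{2^i}$, then $y \bmod I^{2^{i+1}}$ is a $\Z$-linear function of $(m_b)_{b\in B_0\cup\cdots\cup B_{i-1}}$ whose matrix one computes by expanding the product (all cross terms of total $I$-degree $\ge 2^{i+1}$ vanish modulo $I^{2^{i+1}}$). Thus the condition at level $i$ reads: $\psi_i\bigl((m_b)_{b\in B_i}\bigr) = -\,(\text{that linear function of the earlier }m_b)$ in $I^{2^i}/I^{2^{i+1}}$. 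Stacking these conditions over all $i<j$ yields a single $\Z$-linear system $M(m_b)_{b\in\BB}=0$ over the finite modules $\bigoplus_i I^{2^i}/I^{2^{i+1}}$ (each handled via its efficient presentation); one more application of the kernel algorithm produces a $\Z$-basis for $\ker(g)$.

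The main obstacle I expect is bookkeeping the carries correctly and bounding their cost: one must verify that the map sending $(m_b)_{b<i}$ to the class of $y$ in $I^{2^i}/I^{2^{i+1}}$ is genuinely $\Z$-linear (it is, because modulo $I^{2^{i+1}}$ every product of $\ge 2$ of the $b$'s or of a $b$ with an element of $I^{2^i}$ drops out, so only the ``first-order'' terms $\sum m_b b$ survive), and that the matrix entries stay polynomially bounded in size — which follows since there are only polynomially many blocks, each generating set $B_i$ has polynomial size, and arithmetic in $R$ is polynomial-time. Granting this, correctness is immediate: the linear system encodes exactly the successive conditions that Algorithm \ref{algwecalledthisd} would check, so its solution set is $\ker(g)$, and every step above is a call to a known polynomial-time subroutine.
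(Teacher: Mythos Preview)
There is a genuine gap in your linearity claim. Your assertion that ``only the first-order terms $\sum m_b b$ survive'' is false: products of two elements of $B_0\subset I$ lie in $I^2$, which is \emph{not} contained in $I^{2^{i+1}}$ once $i\ge 1$. Concretely, take $R=\F_2[t]/(t^4)$, $I=(t)$, $B_0=\{t\}$, and $i=1$. On $\ker(\psi_0)=2\Z$, with $m_0=2$ one has $(1+t)^2=1+t^2$, so $y=t^2\ne 0$ in $I^2/I^4$, whereas your formula gives $\sum m_b b=2t=0$. So the ``matrix'' you propose to compute by expanding the product is wrong, and the stacked linear system you build from it does not describe $\ker(g)$.

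What \emph{is} true is that the carry $(m_b)\mapsto y\bmod I^{2^{i+1}}$ is a group homomorphism when restricted to the sublattice on which all earlier conditions hold, simply because $(1+z)(1+z')\equiv 1+z+z'\bmod I^{2^{i+1}}$ for $z,z'\in I^{2^i}$; but this is not the reason you give, and to compute this homomorphism you must evaluate it on a $\Z$-basis of that sublattice, which means you must already have computed the sublattice. This forces an inductive procedure---compute the sublattice through level $i-1$, evaluate the carry on its basis, impose the level-$i$ condition, repeat---and that is exactly what the paper does. The paper runs a descending induction on $j$ over $C_j=\bigcup_{k\ge j}B_k$: assuming defining relations for $1+C_j$, it takes generators $(n_b)$ of $\ker\bigl(\Z^{B_{j-1}}\to I^{2^{j-1}}/I^{2^j}\bigr)$, computes the actual element $\prod_{b\in B_{j-1}}(1+b)^{n_b}\in 1+I^{2^j}$, rewrites it via Algorithm~\ref{algwecalledthisd} as $\prod_{b'\in C_j}(1+b')^{m_{b'}}$, and adjoins the relations $((n_b),(-m_{b'}))$ to the known relations for $C_j$. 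No global linear system is formed; the nonlinear carry is handled one generator at a time.
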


\begin{proof}
Let $C_j=\bigcup_{k\ge j}B_j$. 
We proceed by induction on decreasing $j$.
We have $\langle 1+C_j\rangle = 1+I^{2^j}$
(applying Lemma \ref{wecalledthisa} with $I^{2^j}$ in place of $I$).
Assume we already have defining relations for $1+C_j$, 
i.e., we have generators for the kernel of
$\Z^{C_j} \to 1+I^{2^j}$, 
$(m_b)_{b\in C_j} \mapsto \prod_{b\in C_j} (1+ b)^{m_b}$,
and would like to find
defining relations for $1+C_{j-1}$.
Proposition \ref{wecalledthisb}
gives an algorithm for finding
a basis for 
the kernel of
$\Z^{B_{j-1}} \to I^{2^{j-1}}/I^{2^j}$, 
$(n_b)_{b\in B_{j-1}} \mapsto \prod_{b\in B_{j-1}} {n_b}b + I^{2^j}$ 
in polynomial time.
For each defining relation $(n_b)_{b\in B_{j-1}}$ for $B_{j-1}+I^{2^j}$ we have
$\sum_{b\in B_{j-1}} {n_b}b\equiv 0$ mod $I^{2^j}$ so
$\prod_{b\in B_{j-1}} (1+b)^{n_b} \equiv 1$ mod $(1+I^{2^j})$. 
Algorithm \ref{algwecalledthisd} gives a polynomial-time algorithm to find
$(m_{b'})_{b'\in C_j}\in\Z^{C_j}$ such that 
$\prod_{b\in B_{j-1}} (1+b)^{n_b} = \prod_{b'\in C_j} (1+b')^{m_{b'}}\in 1+I^{2^j}$.
Then $((n_b)_{b\in B_{j-1}},(-m_{b'})_{b'\in C_j})$ 
is in the kernel of the map $\Z^{C_{j-1}} \to 1+I^{2^{j-1}}$,
and these relations along with the defining relations for $1+C_j$
form a set of defining relations for $1+C_{j-1}$.
\end{proof}

\begin{thm}
\label{1plusIpresentthm}
There is a deterministic polynomial-time algorithm
that, given a finite commutative ring and an ideal $I$ of $R$ such that
$I \subset \sqrt{0}$, 
produces 
an efficient presentation $\langle 1+\BB|\RR \rangle$ for $1+I$. 
\end{thm}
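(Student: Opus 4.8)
The plan is to assemble Theorem \ref{1plusIpresentthm} from the pieces already developed in this section, the only missing ingredient being an efficient way to produce the subsets $B_i$. First I would reduce to the situation where $I$ is genuinely nilpotent: since $R$ is finite and $I \subset \sqrt{0}$, there is some $N$ with $I^N = 0$, and one finds such an $N$ by computing the powers $I, I^2, I^3, \dots$ (each as an ideal of $R$, via Proposition \ref{wecalledthisb}-style linear algebra on the additive group of $R$) until reaching $0$; this terminates in at most $\log_2\#R$ doublings along the filtration $I \supset I^2 \supset I^4 \supset \cdots$. So only finitely many $B_i$ will be nonempty, and in fact $B_i = \emptyset$ for $2^i \ge N$.

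Next I would construct the $B_i$. For each $i$ with $I^{2^{i+1}} \neq 0$, apply Proposition \ref{wecalledthisb} to the pair of ideals $I^{2^{i+1}} \subset I^{2^i}$ to obtain an efficient presentation of the finite abelian group $I^{2^i}/I^{2^{i+1}}$; read off from the generators in that presentation a finite subset $B_i \subset I^{2^i}$ (lifting each generator arbitrarily) such that $B_i \cup I^{2^{i+1}}$ generates $I^{2^i}$ additively. This is polynomial-time since there are $O(\log \#R)$ values of $i$ and each application of Proposition \ref{wecalledthisb} is polynomial-time. Set $\BB = \bigcup_i B_i$.

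With the $B_i$ in hand, everything else is already proved. Lemma \ref{wecalledthisa} gives that $1+\BB$ generates $1+I$ multiplicatively, so condition (a) of Definition \ref{effpresdefn} holds for the map $f: \BB \to 1+I$, $b \mapsto 1+b$. Lemma \ref{wecalledthisc} produces, in polynomial time, a finite $\Z$-basis $\RR$ for the kernel of $g_\BB : \Z^\BB \to 1+I$, giving condition (b). And Algorithm \ref{algwecalledthisd} (correct and polynomial-time by Proposition \ref{wecalledthisd}), on input $1+x \in 1+I$, returns $(m_b)_{b\in\BB}$ with $1+x = \prod_b (1+b)^{m_b}$, i.e.\ an element of $g_\BB^{-1}(1+x)$, giving condition (c). Hence $\langle 1+\BB \mid \RR\rangle$ is an efficient presentation for $1+I$, and every step above runs in polynomial time in the size of the input $(R, I)$.

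I do not anticipate a genuine obstacle here — the theorem is a packaging of Lemmas \ref{wecalledthisa} and \ref{wecalledthisc}, Algorithm \ref{algwecalledthisd}, and Proposition \ref{wecalledthisb}. The one point requiring a little care is bookkeeping on sizes: one must check that the number of doubling steps, the sizes of the sets $B_i$ (bounded by the number of generators in the presentations from Proposition \ref{wecalledthisb}, hence by $O(\log \#R)$ each), and the bit-lengths of the integers $m_b$ appearing in intermediate relations all stay polynomially bounded, so that the concatenations in Lemma \ref{wecalledthisc} and the final output remain polynomial-size. Since each constituent algorithm is already asserted polynomial-time and there are only $O(\log \#R)$ stages, this is routine.
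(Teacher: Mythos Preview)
Your proposal is correct and follows essentially the same approach as the paper: use Proposition \ref{wecalledthisb} to produce the sets $B_i$, invoke Lemma \ref{wecalledthisa} for condition (a), Lemma \ref{wecalledthisc} for condition (b), and Proposition \ref{wecalledthisd} for condition (c). You supply more explicit detail than the paper does (computing the nilpotency index, bounding the number of stages by $O(\log \#R)$, and tracking sizes), but the structure and the cited ingredients are identical.
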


\begin{proof}
Apply the algorithm in 
Proposition \ref{wecalledthisb} to obtain 
for each $i\in\Z_{\ge 0}$ a set $B_i \subset I^{2^i}$
such that 
$B_i \cup I^{2^{i+1}}$ generates the additive group $I^{2^i}$.
Since $I$ is nilpotent, we can take
$B_i=\emptyset$ for all but finitely many $i$. 
By Lemma \ref{wecalledthisa} 
the set 
$\BB=\bigcup_{i\ge 0} B_i$ has
the property that $1+\BB$ generates $1+I$.
Defining relations $\RR$ are given by Lemma \ref{wecalledthisc},
and part (c) of Definition \ref{effpresdefn} holds by Proposition \ref{wecalledthisd}.
\end{proof}

Theorem \ref{muAthm4} now follows from Theorem \ref{1plusIpresentthm} 
and Algorithm \ref{genlprinalgor2a}.

\begin{rem}
Suppose $R$ is a finite commutative ring, $I\subset R$ is a nilpotent ideal,
and $R'$ is a subring of $R$.
Let $I'=I\cap R'$.
The algorithm in Theorem \ref{1plusIpresentthm}
gives efficient presentations for the multiplicative groups $1+I$
and $1+I'$. 
We can apply Algorithm \ref{genlprinalgor4} 
with $G = 1+I \subset R^\ast$, and
$T'$ a set of generators for $1+I'$,
and $T$ a set of generators for some subgroup of $1+I$.
In the next section we will apply this to our setting.
\end{rem}

\begin{ex}
Let $R=\Z/p^2\Z$ and $I=\sqrt{0_R} = p\Z/p^2\Z$. 
Then $I^2=0$, and $1+I$ is the order $p$ subgroup of
$(\Z/p^2\Z)^\ast \cong \Z/p\Z \times \Z/(p-1)\Z$.
The map $1+I \isom \Z/p\Z$, $1+x\mapsto x/p$ is a group isomorphism,
so the discrete logarithm problem is easy in $1+I$.
\end{ex}

\begin{ex}
Let $R=\Z/p^4\Z$ and $I=\sqrt{0_R} = p\Z/p^4\Z$. 
Then $I^4=0$. Here,
the map $1+I \isom \Z/p^3\Z$, $1+x\mapsto x/p$ is not a group homomorphism.
The discrete logarithm problem is easy in $1+I$ not because it is
(isomorphic to) an additive group, but because there is a filtration
of additive groups, namely,
$
(1+I)/(1+I^2) \cong I/I^2$ and $(1+I^2)/(1+I^4) \cong I^2/I^4 = I^2.
$
\end{ex}

\section{From $\mu(C)_p$ to $\mu(A)_p$}

Let $A$ be an order and let $p$ be a prime.
Recall $C$ from Definition \ref{Cdefn} and
let
$$
\ff = \{ x\in C : xC \subset A_\sep \},
$$
which is the largest ideal of $C$ that is contained in $A$.
We shall see that $C/\ff$ is a finite ring, and it has 
$A_\sep/\ff$ as a subring.
Suppose we are given a set $M \subset C^\ast$ such that 
$\mu(C)_p=\langle M\rangle$. Let
$$
I = \sum_{\zeta\in M} (\zeta - 1)(C/\ff), \qquad I'=I\cap (A_\sep/\ff).
$$
Define 
$$
g_1 : \Z^M \onto \mu(C)_p, \qquad
(a_\zeta)_{\zeta\in M} \mapsto \prod_{\zeta\in M} \zeta^{a_\zeta},
$$
let $g_2 : \mu(C)_p \to 1+I$ be the natural map $\zeta\mapsto\zeta + \ff$,
let ${\hat{g}} : \mu(C)_p  \to (1+I)/(1+I')$ denote the composition of $g_2$ with
the quotient map,
define $g : \Z^M \to 1+I$ by $g = g_2\circ g_1$,
and define
\begin{equation}
\label{psidef}
\psi : \Z^M \to (1+I)/(1+I') \quad \text{  by } \quad \psi = {\hat{g}}\circ g_1.
\end{equation}

\begin{prop}
\label{Ipsietc}
With notation as above,
\begin{enumerate}
\item 
$I$ is a nilpotent ideal of $C/\ff$, i.e., $I \subset \sqrt{0_{C/\ff}}$;
\item 
$I'$ is a nilpotent ideal of $A_\sep/\ff$;
\item 
$C/\ff$ is a finite ring of $p$-power order,
\item 
$\mu(A)_p$ is the kernel of the map ${\hat{g}}$;
\item 
$\mu(A)_p$ is 
the image of 
$\ker(\psi)$ under the map $g_1$.
\end{enumerate}
\end{prop}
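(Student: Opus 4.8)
The plan is to establish the structural facts (i)--(iii) and then read off (iv) and (v) by short diagram chases; the real content is in (i) and (iii). I would begin with the conductor $\ff$. Since $C/A_\sep$ is a finite abelian group of $p$-power order, its order $p^r=(C:A_\sep)$ annihilates it, so $p^rC\subseteq A_\sep\subseteq A$; as $p^rC$ is an ideal of $C$, the description of $\ff$ as the largest ideal of $C$ contained in $A$ forces $p^rC\subseteq\ff$. Hence $C/\ff$ is a quotient of $C/p^rC$, and the latter is finite of $p$-power order because $C$ is a free $\Z$-module of finite rank; this gives (iii), and it also shows that $C/\ff$, together with its subring $A_\sep/\ff$ (note $\ff\subseteq A_\sep$), is a finite ring.

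For (i), the formula $I=\sum_{\zeta\in M}(\zeta-1)(C/\ff)$ exhibits $I$ as a finite sum of ideals of $C/\ff$, hence an ideal; and since $C/\ff$ is finite, being nilpotent is equivalent to $I\subseteq\sqrt{0_{C/\ff}}$, so it suffices to prove the latter inclusion. This is the one point that requires a genuine argument. Because $C/\ff$ has $p$-power order, each of its residue fields $(C/\ff)/\mm$ is a finite field of characteristic $p$, so the group $((C/\ff)/\mm)^\ast$ has order $p^d-1$ for some $d\ge 1$; given $\zeta\in M\subset\mu(C)_p$, the image of $\zeta$ in $(C/\ff)^\ast$ has order dividing a power of $p$, so its image in the cyclic group $((C/\ff)/\mm)^\ast$ has order dividing both a power of $p$ and $p^d-1$; as these are coprime, that image is trivial, i.e.\ $\zeta-1\in\mm$. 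Since $\mm$ was an arbitrary maximal ideal, $\zeta-1\in\bigcap_\mm\mm=\sqrt{0_{C/\ff}}$ for every $\zeta\in M$, so $I\subseteq\sqrt{0_{C/\ff}}$. Statement (ii) is then formal: $I'=I\cap(A_\sep/\ff)$ is the contraction of the ideal $I$ of $C/\ff$ to the subring $A_\sep/\ff$, hence an ideal of $A_\sep/\ff$, and it is nilpotent, being contained in the nilpotent ideal $I$.

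Since $I$ is nilpotent, $1+I$ is a subgroup of $(C/\ff)^\ast$, and it contains the image of each $\zeta\in M$, hence the image of $\langle M\rangle=\mu(C)_p$; thus $g_2$, $\hat g$, $g$ and $\psi$ are well defined, and $\zeta-1+\ff\in I$ for every $\zeta\in\mu(C)_p$. For (iv), I would compute directly: $\zeta\in\ker(\hat g)$ iff $\zeta+\ff\in 1+I'$ iff $\zeta-1+\ff\in I'=I\cap(A_\sep/\ff)$; since $\zeta-1+\ff$ already lies in $I$ and $\ff\subseteq A_\sep$, this is equivalent to $\zeta-1\in A_\sep$, and hence (as $1\in A_\sep$) to $\zeta\in A_\sep$. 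So $\ker(\hat g)=\mu(C)_p\cap A_\sep$, which equals $\mu(A_\sep)_p$ --- for $\supseteq$ use $A_\sep\subseteq C$, and for $\subseteq$ use that a unit of $C$ of $p$-power order lying in $A_\sep$ has inverse a power of itself, hence in $A_\sep$ --- and this is $\mu(A)_p$ by Lemma \ref{idmulem}. Finally (v) follows formally: $\psi=\hat g\circ g_1$ with $g_1:\Z^M\onto\mu(C)_p$ surjective, so $\ker(\psi)=g_1^{-1}(\ker(\hat g))=g_1^{-1}(\mu(A)_p)$ by (iv), and applying $g_1$ and using $\mu(A)_p=\mu(A_\sep)_p\subseteq\mu(C)_p=\im(g_1)$ gives $g_1(\ker(\psi))=\mu(A)_p$. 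I expect the main obstacle to be the nilpotence claim in (i); everything else amounts to bookkeeping with the chain $\ff\subseteq A_\sep\subseteq C\subseteq E_\sep$ and with the definitions of $g_1$, $g_2$, $\hat g$, $\psi$.
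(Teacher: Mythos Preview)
Your proof is correct and follows essentially the same approach as the paper: you use $p^r\in\ff$ to get (iii) and to see that every residue field of $C/\ff$ has characteristic $p$, whence each $\zeta-1$ lies in the nilradical, giving (i) and (ii); and you spell out the short verification of (iv) (which the paper leaves as ``directly from the definitions'') and deduce (v) formally. The only cosmetic difference is that you establish (iii) before (i), whereas the paper does the reverse.
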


\begin{proof}
Since $C/A$ is killed by $p^r$ for some $r\in\Z_{\ge 0}$, we have
$p^r\in\ff$, so $p \in \sqrt{0_{C/\ff}}$, so $p$ is in every
prime ideal of $C/\ff$. 
Suppose $\zeta\in \mu(C)_p$.
Then the image of $\zeta$ in every field of characteristic $p$ is $1$.
Thus, $\zeta - 1$ is in every prime ideal of $C/\ff$, so 
$\zeta - 1 \in \sqrt{0_{C/\ff}}$.
By the definition of $I$ we have $I \subset \sqrt{0_{C/\ff}}$, and (i) and (ii) follow.

Since $p^r\in\ff$ we have $p^rC \subset \ff$, so $C/\ff$ is a quotient
of $C/p^rC$, which is a finite ring of $p$-power order. This gives (iii).

Part (iv) follows directly from the definitions, and then (v)
follows from (iv).
\end{proof}

\begin{algorithm}
\label{I1coralg}
The algorithm takes as input 
an order $A$, a prime $p$, and a finite set of generators $M$ 
for $\mu(C)_p$, 
and computes a finite set of generators for $\mu(A)_p$.
\end{algorithm}

\begin{enumerate}
\item
Compute the finite abelian group $C/A_\sep$ and
$$\Hom(C,C/A_\sep) \cong 
(C/A_\sep) \oplus (C/A_\sep) \oplus \cdots \oplus (C/A_\sep)$$
(with $\rk_\Z(C)$ summands $C/A_\sep$),
and compute $\ff$ as the kernel of the group
homomorphism $A_\sep \to \Hom(C,C/A_\sep)$
sending $x\in A_\sep$ to the map $y \mapsto xy+A_\sep$.
Next compute the finite rings $A_\sep/\ff \subset C/\ff$.
This entire step can be done using standard
algorithms for finitely generated abelian groups.
\item
Apply the algorithm in Theorem \ref{1plusIpresentthm}
with $R=C/\ff$ and the $I$ of this section to obtain an efficient
presentation for $1+I$.
\item
Apply the algorithm in Theorem \ref{1plusIpresentthm}
with $R=A_\sep/\ff$ and $I'$ in place of $I$ to obtain a
finite set $T'$ of generators for $1+I'$.
\item
Apply Algorithm \ref{genlprinalgor4}
with $G = 1+I$, the efficient presentation from step (ii),
$T=M$, and $T'$ from step (iii) to obtain a finite set of 
generators $S'$ for
$\ker(\Z^T \to G/\langle T'\rangle)$.
\item
Take the image of $S'$ under the map $g_1 : \Z^M \to \mu(C)_p$.
\end{enumerate}

\begin{thm}
\label{I1coralgworks}
Algorithm \ref{I1coralg} produces correct output and runs in polynomial time.
\end{thm}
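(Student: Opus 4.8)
The plan is to check that the maps and rings assembled by Algorithm \ref{I1coralg} coincide with those of Proposition \ref{Ipsietc}, and then to deduce correctness from part (v) of that proposition together with the specifications of the subroutines that are called; the running-time bound will then follow because each step invokes a procedure already shown to be polynomial-time, once we bound the size of the finite ring $C/\ff$.

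First I would dispose of step (i). The orders $A_\sep$ and $C$ are computed from $A$ and $p$ in polynomial time by Algorithm \ref{Asepalgor} and the image computation of Section \ref{BtoCsect}. Since $\ff$ is the largest ideal of $C$ contained in $A$, it satisfies $\ff\subset A\cap E_\sep=A_\sep$, so $\ff=\{x\in A_\sep:xC\subset A_\sep\}$, which is exactly the kernel of the homomorphism $A_\sep\to\Hom(C,C/A_\sep)$, $x\mapsto(y\mapsto xy+A_\sep)$ used in the algorithm; hence step (i) correctly produces $\ff$ and the finite rings $A_\sep/\ff\subset C/\ff$. By Proposition \ref{Ipsietc}(i)--(iii), $I\subset\sqrt{0_{C/\ff}}$ and $I'\subset\sqrt{0_{A_\sep/\ff}}$ are nilpotent ideals of those finite commutative rings, so steps (ii) and (iii) may legitimately invoke Theorem \ref{1plusIpresentthm}, which returns an efficient presentation for $1+I$ together with a finite generating set $T'$ for $1+I'=\langle T'\rangle\subset 1+I$.

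Next I would match step (iv) against Algorithm \ref{genlprinalgor4}. Taking $T=M$ with $f_T(\zeta)=\zeta+\ff$ --- which lies in $1+I$ because $\zeta-1=(\zeta-1)\cdot 1\in(\zeta-1)(C/\ff)\subset I$ --- the map $g_T\colon\Z^M\to 1+I$ of that algorithm is exactly $g=g_2\circ g_1$, so the set $S'$ it outputs generates $\ker\bigl(\Z^M\xrightarrow{g}1+I\to(1+I)/\langle T'\rangle\bigr)=\ker\bigl(\Z^M\xrightarrow{g}1+I\to(1+I)/(1+I')\bigr)=\ker(\psi)$, with $\psi$ as in \eqref{psidef}. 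Since $g_1$ is a surjective group homomorphism, $g_1(S')$ generates $g_1(\ker(\psi))$, which equals $\mu(A)_p$ by Proposition \ref{Ipsietc}(v); as $g_1(S')$ is the output of step (v), this establishes correctness.

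For the running time, step (i) uses only standard algorithms for finitely generated abelian groups, steps (ii) and (iii) are polynomial-time by Theorem \ref{1plusIpresentthm}, step (iv) is polynomial-time by Theorem \ref{genlprinthm4}, and step (v) is a single matrix multiplication. The step I expect to be the main obstacle is bounding $\#(C/\ff)$ polynomially in the input size, since this ring is the input to the subroutines above. Here I would argue: writing $p^r=(C:A_\sep)$, one has $p^rC\subset A_\sep$, and then $p^rC\cdot C\subset p^rC\subset A_\sep$ gives $p^rC\subset\ff$, so $C/\ff$ is a quotient of $C/p^rC$ and $\#(C/\ff)\le(p^r)^{\rk_\Z(C)}$; finally $p^r$ divides $(B:A_\sep)$, which by Proposition \ref{Balgorrrr} is the absolute value of the determinant of an integer matrix of polynomial bit-length, hence is itself of polynomial bit-length. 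All the rest is bookkeeping: matching the maps built inside the algorithm with those of Proposition \ref{Ipsietc} and quoting the polynomial-time results already established.
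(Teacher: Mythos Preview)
Your proof is correct and follows essentially the same approach as the paper: verify via Proposition~\ref{Ipsietc} that steps (ii)--(iii) may invoke Theorem~\ref{1plusIpresentthm}, identify the map in step (iv) with $\psi$, and conclude by Proposition~\ref{Ipsietc}(v). You supply considerably more detail than the paper, in particular the explicit bound $\#(C/\ff)\le (p^r)^{\rk_\Z C}$ with $p^r\mid(B:A_\sep)$, which the paper leaves implicit.
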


\begin{proof}
Since $C/\ff$ and $A_\sep/\ff$ are finite commutative rings, and $I$ 
and $I'$ are nilpotent, 
Theorem \ref{1plusIpresentthm} 
is applicable in steps (ii) and (iii).
The map $\Z^M = \Z^T \to G/\langle T'\rangle = (1+I)/(1+I')$
in step (iv) is our map $\psi$ from \eqref{psidef}.
By Proposition \ref{Ipsietc}(v), step (v) produces generators for $\mu(A)_p$.
\end{proof}

\section{Finding roots of unity}
\label{roualgsect}

\begin{algorithm}
\label{rootsofunityalg}
Given an order $A$, the algorithm outputs a finite set of generators for $\mu(A)$.
\end{algorithm}

\begin{enumerate}
\item
Use Algorithm \ref{findmalgor} to compute $E_\sep$, 
all $\m\in\Spec(E)$, the fields $E/\m$, and the natural maps $E \to E/\m$.
\item
Apply Algorithm \ref{Asepalgor} to 
compute
$A_\sep = A \cap E_\sep$. 
\item
Apply Algorithm \ref{Balgorrrr} to compute
for each $\m\in\Spec(E)$ 
 the subring 
$A_\sep/(\m\cap A_\sep)$ of $E_\sep/\m$. 
\item
Apply the algorithm in Proposition \ref{Balgorrrr2} to compute,
for each $\m\in \Spec(E)$, 
a generator $\theta_\m$ for $\mu(A_\sep/(\m\cap A_\sep))$, 
its order,
the prime factorization of its order,
and for each prime $p$ dividing its order a generator
$\theta_{\m,p}$ of $\mu(A_\sep/(\m\cap A_\sep))_p$.
\item
For each prime  $p$ dividing the order of at least one of the
groups $\mu(A_\sep/(\m\cap A_\sep))$, do the following:
\begin{enumerate}
\item
Use the image algorithm
in \S 14 of \cite{HWLMSRI} to compute 
a $\Z$-basis for $C = A_\sep[{1/p}]\cap B$
(as discussed in \S\ref{BtoCsect} above, just before Proposition \ref{nonpcptprop}).
\item
Apply Algorithm \ref{BtoCalgor} to compute 
an efficient presentation for $\mu(C)_p$.
\item
Apply Algorithm \ref{I1coralg} to compute generators for $\mu(A)_p$.
\end{enumerate}
\item 
Generators for these groups $\mu(A)_p$ form a set of generators for $\mu(A)$.
\end{enumerate}

That Algorithm \ref{rootsofunityalg} produces correct output
and runs in polynomial time follows immediately.
We can now obtain a deterministic polynomial-time algorithm
that, given an order $A$,
determines an efficient presentation for $\mu(A)$.

\begin{algorithm}
\label{rootsofunityrelsalg}
The algorithm takes an order $A$ and produces
an efficient presentation for $\mu(A)$.
\end{algorithm}

\begin{enumerate}
\item
Apply the algorithm in
Proposition \ref{Balgorrrr3} to obtain an efficient presentation $\langle S|R\rangle$ 
for $\mu(B)$.
\item
Apply Algorithm \ref{rootsofunityalg} to obtain a 
finite set of generators 
for $\mu(A)$.
\item
Apply Algorithm \ref{genlprinalgor2a} with $G=\mu(B)$ to obtain 
an efficient presentation for $\mu(A)$.
\end{enumerate}

\begin{ex}
Let $A = \Z[X]/(X^{4}-1)$. Then with $p=2$:
$$
B = C =
\Z[X]/(X-1)  \times \Z[X]/(X+1) \times \Z[X]/(X^{2}+1) \cong \Z \times \Z \times \Z[\i],
$$
and $(C:A)=8$.
We identify $X$ with $(1,-1,\i) \in \Z \times \Z \times \Z[\i]$.
Then
$$
\mu(A)_2 = \mu(A) \subset \mu(B) = \mu(C)_2 = \langle (-1,1,1), (1,-1,1),(1,1,\i)\rangle. 
$$
We have 
$$
\ff = 4\Z \times 4\Z \times 2\Z[\i]
$$
of index 64 in $C$, and
$$
C/\ff = \Z/4\Z \times \Z/4\Z \times \Z[\i]/2\Z[\i] = 
\Z/4\Z \times \Z/4\Z \times \F_2[\varepsilon]
$$
with $\varepsilon = 1+\i$.
The index 8 subring of $C/\ff$ generated by $(1,-1,1+\varepsilon)$ is $A/\ff$.
Alternatively,
$$
A/\ff = (\Z/4\Z)[Y]/(2Y,Y^2)
$$
where $Y = X-1 = (0,2,\varepsilon)\in A/\ff$.
With $M = \{ (-1,1,1), (1,-1,1),(1,1,\i)\}$ we have
$$
I = (2\Z/4\Z) \times (2\Z/4\Z) \times (\varepsilon\F_2[\varepsilon]) = \sqrt{0_{C/\ff}},
$$
$I^2=0$,  and
$$
I' = I \cap (A/\ff) = \sqrt{0_{A/\ff}} = 
\{0,2,Y,Y+2 \}.
$$
With $\psi$ as in \eqref{psidef},
we have $\psi(a,b,c) = a+b+c+2\Z\in\Z/2\Z$ and
$$
\ker(\psi) = \{ (a,b,c)\in\Z^M : a+b+c \text{ is even}\} = 
\Z\cdot(2,0,0) + \Z\cdot(1,1,0) + \Z\cdot(1,0,1).
$$
Algorithm \ref{rootsofunityalg} outputs
$$
\mu(A) = \mu(A)_2 = \langle -X^2 \rangle \times \langle -X^3 \rangle  = \langle X,-1 \rangle 
\cong \Z/2\Z \times \Z/4\Z.
$$
\end{ex}

\begin{ex}
Let $A = \Z[X]/(X^{12}-1)$. Then
$$
E = \Q[X]/(X^{12}-1) \cong 
\Q \times \Q \times \Q(\zeta_3) \times \Q(\i) \times \Q(\zeta_3) \times \Q(\zeta_{12})
$$
and
\begin{eqnarray*}
& B = 
& \Z[X]/(X-1)   \, \, \times \, \, \Z[X]/(X+1)  \, \, \times \, \, \Z[X]/(X^{2}+X+1) \\
&& \times  \, \, \Z[X]/(X^{2}+1)  \, \, \times  \, \, \Z[X]/(X^{2}-X+1) \, \, \times \, \, \Z[X]/(X^4-X^2+1) \hookrightarrow E.
\end{eqnarray*}
We have for the discriminants of the orders: 
$$
|\Delta_B| =  1\cdot 1\cdot 3\cdot 4\cdot 3\cdot 12^2, \qquad
|\Delta_A| = 12^{12},
$$
so
$$
\#(B/A) = \sqrt{|\Delta_A|/|\Delta_B|} = 2^9\cdot 3^4.
$$
Thus if $p=2$ then $(C:A)=2^9$, while if $p=3$ then $(C:A) = 3^4$.
The graph $\Gamma(B)$ consists of  6 vertices with no edges.
With the numbers $n(A,\m,\nn)$ on the edges,
the graph $\Gamma(A)$ is:
$$
\xymatrix{
 & (X+1) 
 \ar@{-}_{2}[ld] \ar@{-}^{2}[rd] \ar@{-}^(.7){3}[dd]  \\
(X-1) \ar@{-}_(.35){2}[rr] \ar@{-}_-{3}[dd] & &  (X^2+1)  \ar@{-}^-{9}[dd]  \\
& (X^2-X+1) \ar@{-}_-{4}[ld]  \ar@{-}^-{4}[rd]   \\
(X^2+X+1)  \ar@{-}_-{4}[rr]  && (X^4-X^2+1) 
}
$$

Suppose $p=2$. Then the graph $\Gamma(C)$ 
is:
$$
\xymatrix{
   & \bullet  \ar@{-}[dd]  \\
\bullet   \ar@{-}[dd] && \bullet \ar@{-}[dd] \\
& \bullet \\
\bullet   && \bullet  
}
$$

We have $\mu(C)_2 = \prod\mu(C_W)_2$ with the product
running over the 3 connected components $W$.
The 
left 2 $W$'s give $\mu(C_W)_2 = \{\pm 1\}$, while
the remaining one gives $\mu(C_W)_2 = \langle -X^3\rangle$.
This gives
$
 -X^3, -1 \in\mu(A)_2.
$

Suppose $p=3$. Then the graph $\Gamma(C)$ 
is:
$$
\xymatrix{
 & \bullet \ar@{-}[ld] \ar@{-}[rd]   \\
\bullet \ar@{-}[rr]  & &  \bullet \\
& \bullet \ar@{-}[ld]  \ar@{-}[rd]   \\
\bullet  \ar@{-}[rr]  && \bullet
}
$$

We have $\mu(C)_3 = \prod\mu(C_W)_3$ with the product
running over the 2 connected components $W$.
The  
top $W$ has $\mu(C_W)_3 = \{ 1\}$, while
for the  
bottom $W$ one has that $\mu(C_W)_3$ is generated by the image of $X^4$,
and this gives
$
X^4 \in \mu(A)_3.
$

Continuing the algorithm by hand is more complicated than
in the previous example.
However, we note that here $A$ is the order $\Z\langle G\rangle$
defined in \cite{LwS} with 
$G = \langle -1\rangle \times \langle X\rangle \cong \Z/2\Z \times \Z/12\Z$,
and it follows from Remark 16.3 of \cite{LwS} that $\mu(A) = G
= \langle -1\rangle \times \langle X\rangle$.
\end{ex}

\end{document}